
\documentclass[12pt]{amsart}



\usepackage{
	amsmath,
 	amsfonts,
  	amssymb,
 	amsthm,
        datetime,
	}

\usepackage{tikz-cd}
\usepackage{tikz}

\usepackage{rotating}

\usetikzlibrary{decorations.pathmorphing, decorations.pathreplacing}
 \usetikzlibrary{decorations.pathreplacing,backgrounds,decorations.markings}
 
\usetikzlibrary{arrows,shapes,positioning}
\usetikzlibrary{tikzmark}

\usepackage{color}

\usepackage{stmaryrd}
\usepackage[cal=boondoxo]{mathalfa}
\usepackage{mathtools}
\usepackage{bbm}

\usepackage{hyperref}
\usepackage[capitalise]{cleveref}

\usepackage{a4wide}

\usepackage{mathabx}
\usepackage{tabularx}

\usepackage{etex,bbm,xspace}
\usepackage[cal=boondoxo]{mathalfa}



\theoremstyle{plain}
\newtheorem{thm}{Theorem}[section]
\newtheorem{cor}[thm]{Corollary}
\newtheorem{lem}[thm]{Lemma}
\newtheorem{prop}[thm]{Proposition}

\theoremstyle{definition}
\newtheorem{rem}[thm]{Remark}

\newtheorem{ex}[thm]{Example}
\newtheorem{exe}[thm]{Example}

\theoremstyle{definition}
\newtheorem{defn}[thm]{Definition}

\newenvironment{citethm}[1]{%
	\thm}{\endthm\addtocounter{thm}{-1}}
	
\newenvironment{citecor}[1]{%
	\cor}{\endthm\addtocounter{thm}{-1}}
	
\theoremstyle{plain} 
\newcommand{\thistheoremname}{}
\newtheorem{genericthm}[thm]{\thistheoremname}

 \newtheorem*{genericthm*}{\thistheoremname}
\newenvironment{namedthm*}[1]
  {\renewcommand{\thistheoremname}{#1}%
   \begin{genericthm*}}
  {\end{genericthm*}}


%
\def\makeautorefname#1#2{\expandafter\def\csname#1autorefname\endcsname{#2}}
%
%
\makeautorefname{lem}{Lemma}%
\makeautorefname{prop}{Proposition}%
\makeautorefname{rem}{Remark}%
\makeautorefname{section}{Section}%


\newcommand{\cC}{\mathcal{C}}


\newcommand{\bC}{\mathbb{C}}

\newcommand{\bF}{\mathbb{F}}

\newcommand{\bN}{\mathbb{N}}

\newcommand{\bP}{\mathbb{P}}

\newcommand{\bR}{\mathbb{R}}

\newcommand{\bZ}{\mathbb{Z}}


\newcommand{\ba}{\mathbf{a}}
\newcommand{\oba}{ \mathbf{\overline a}}
\newcommand{\bb}{\mathbf{b}}
\newcommand{\obb}{ \mathbf{\overline b}}
\newcommand{\bc}{\mathbf{c}}
\newcommand{\obc}{ \mathbf{\overline c}}
\newcommand{\bd}{\mathbf{d}}


\newcommand{\vsimeq}{\rotatebox{-90}{\(\simeq\)}}
\newcommand{\vsimeqop}{\rotatebox{90}{\(\simeq\)}}

\newcommand{\brak}[1]{\langle #1\rangle}

\newcommand{\setbuild}[2]{\left\{#1\,|\,#2\right\}}

\newcommand{\bigBV}{\bigwedge\nolimits}



\DeclareMathOperator{\id}{Id}

\DeclareMathOperator{\Image}{im}


\newcommand{\slt}{\mathfrak{sl_2}}

\newcommand{\crossingless}[2]{{B^{#1}_{#2}}}
\newcommand{\torus}{T}
\newcommand{\oSpgrFib}[2]{{T^{#1}_{#2}}}
\newcommand{\spgrFib}[2]{{\mathfrak{B}^{#1}_{#2}}}

\DeclareMathOperator{\Gr}{Gr}
\DeclareMathOperator{\PGL}{PGL}
\DeclareMathOperator{\SL}{SL}

\DeclareMathAlphabet{\pazocal}{OMS}{zplm}{m}{n}

\newcommand{\TQFT}{\pazocal{F} }
\newcommand{\oddTQFT}{\pazocal{OF} }
\newcommand{\linoddTQFT}{\bZ\text{-}\pazocal{OF} }

\newcommand{\chcobcat}{\text{\textbf{ChCob}}}
\newcommand{\linchcobcat}{\bZ\text{-\textbf{ChCob}}}
\newcommand{\Top}{\text{\textbf{Man}}_{co}}
\newcommand{\DoubleTop}{\text{\textbf{Man}}_{co}^{\overset{\rightarrow}{\leftarrow}}}
\newcommand{\MDoubleTop}{\underline{\text{\textbf{Man}}}_{co}^{\overset{\rightarrow}{\leftarrow}}}
\newcommand{\linMDoubleTop}{\bZ\text{-\textbf{\underline{Man}}}_{co}^{\overset{\rightarrow}{\leftarrow}}}
\DeclareMathOperator{\htimes}{{\hat{\times}}}


\DeclareMathOperator{\gmod}{\mathrm{-}gmod}


\DeclareMathOperator{\OPol}{OPol}

\DeclareMathOperator{\TopFunctor}{Top}
\DeclareMathOperator{\linTopFunctor}{\bZ\text{-}Top}

\definecolor{myblue}{rgb}{0,.5,1}
\definecolor{mygreen}{rgb}{.3,.75,.1}

\newcolumntype{C}{>{\centering\arraybackslash}X}


\newcommand{\tikzdiag}[2][]{\tikz[#1,thick,baseline={([yshift=1ex+#2]current bounding box.center)}]}
\newcommand{\tikzdiagc}[1][]{\tikzdiag[#1]{-1ex}}
\newcommand{\tikzdiagh}[1][]{\tikzdiag[#1]{-2ex}}

\tikzstyle{tikzdot}=[fill, circle, inner sep=2pt]

\newcommand{\tikzdiagcm}[1][]{\tikz[#1,yscale=.75,scale=.5,thick,baseline={([yshift=-1ex]current bounding box.north)}]}
\newcommand{\tikzdiagocm}[1][]{\tikz[#1,yscale=-.75,scale=.5,thick,baseline={([yshift=0ex]current bounding box.south)}]}
\newcommand{\tikzdiagtcm}[1][]{\tikz[#1,yscale=.75,scale=.5,thick,baseline={([yshift=-.5ex]current bounding box.center)}]}
\newcommand{\cupdiag}[4][0]{\draw (#2,#1) .. controls (#2,-#4+#1) and (#3,-#4+#1) .. (#3,#1)}
\newcommand{\raydiag}[3][0]{\draw (#2,#1) --  (#2,#1-#3)}
\newcommand{\mtikzdot}{node[midway, tikzdot]{}}

\tikzset{
    partial ellipse/.style args={#1:#2:#3}{
        insert path={+ (#1:#3) arc (#1:#2:#3)}
    }
}

\usetikzlibrary{arrows.meta, 
		  decorations.markings}

\tikzstyle directed=[postaction={decorate,decoration={markings,
    mark=at position #1 with {\arrow{>}}}}]
\tikzstyle ddirected=[postaction={decorate,decoration={markings,
    mark=at position #1 with {\arrow{>>}}}}]
\tikzstyle rdirected=[postaction={decorate,decoration={markings,
    mark=at position #1 with {\arrow{<}}}}]

\tikzstyle{orientedcup}=[arrows = {angle 90 reversed - angle 90}]




\title[Odd arc algebras]
{Real Springer fibers and odd arc algebras}

\subjclass[2010]{14M15, 16W50, 16W55, 20G42, 57M25}

\author{Jens Niklas Eberhardt}
\address{Max-Planck Institute for Mathematics\\
 Vivatsgasse 7 \\ 
53111 Bonn\\ 
Germany}
\email{mail@jenseberhardt.com}

\author{Gr\'egoire Naisse}
\address{Max-Planck Institute for Mathematics\\
 Vivatsgasse 7 \\ 
53111 Bonn\\ 
Germany}
\email{gregoire.naisse@gmail.com}

\author{Arik Wilbert}
\address{University of Georgia\\
Athens, GA 30602 \\
United States of America}
\email{arik.wilbert@uga.edu}
%



\begin{document}

\begin{abstract}
We give a topological description of the two-row Springer fiber over the real numbers. We show its cohomology ring coincides with the oddification of the cohomology ring of the complex Springer fiber introduced by Lauda--Russell. 
We also realize Ozsv\'ath--Rasmussen--Szab\'o's odd TQFT from pullbacks and exceptional pushforwards along inclusion and projection maps between hypertori. 
Using these results, we construct the odd arc algebra as a convolution algebra over components of the real Springer fiber, giving an odd analog of a construction of Stroppel--Webster. 
\end{abstract}


\maketitle


\section{Introduction}\label{sec:intro}

\emph{Arc algebras}, denoted $H^n_n$,  were introduced by Khovanov \cite{khovanov02} to extend his $\slt$-link homology theory~\cite{khovanov00} to tangles. His invariant associates to a tangle with $2n$ bottom and $2m$ top points an object in the homotopy category of $H^m_m$-$H^n_n$-bimodules. 
Stroppel~\cite{stroppel09} and independently Chen--Khovanov~\cite{chenkhovanov06} later defined \emph{generalized arc algebras} $H^{n-k}_k$, and their quasi-hereditary covers $K^{n-k}_k$, which allow to extend the invariant to all tangles.
These algebras are studied in depth by Brundan--Stroppel in the series of papers~\cite{brundan-Stroppel1,brundan-Stroppel2,brundan-Stroppel3,brundan-Stroppel4}. They can also be generalized further to $\mathfrak{sl}_n$-type~\cite{mackaay-tubbenhauer-pan, mackaay1, tubbenhauer1, tubbenhauer2}, and to other types~\cite{ehrig-stroppel1,ehrig-stroppel2}.

Arc algebras are closely related to \emph{two-row Springer fibers}, that is, varieties of flags of vector spaces fixed by a nilpotent operator with two Jordan blocks. 
In fact, Khovanov~\cite{khovanov04} showed that the center of $H^n_n$ is isomorphic to the cohomology ring of the complex $(n,n)$-Springer fiber $\spgrFib{n}{n}(\bC)$, and Stroppel--Webster~\cite{stroppelwebster12} gave a geometric construction of $H^{n-k}_k$ and $K^{n-k}_k$ as convolution algebras built from the cohomology of the irreducible components of $\spgrFib{n-k}{k}(\bC)$.

Khovanov homology is originally 
constructed using the 2d-TQFT associated to the Frobenius algebra $\bZ[X]/(X^2),$ which coincides with the cohomology ring $H^*(S^2)$ of the 2-sphere $S^2$. As conjectured by Khovanov \cite{khovanov04} and proved by Wehrli~\cite{wehrli},  and independently by Russell--Tymoczko~\cite{russelltymoczko} and Russell~\cite{russell11}, the complex two-row Springer fiber $\spgrFib{n-k}{k}(\bC)$ is homeomorphic to the \emph{topological complex Springer fiber}, a union of certain `diagonals' in $(S^2)^{\times 2n}.$ This prompted a geometric construction of Khovanov homology by the third author \cite{wilbert13} using a setup of pullbacks and (exceptional) pushforwards along maps between irreducible components of $\spgrFib{n-k}{k}(\bC).$

In another direction, Ozsv\'ath--Rasmussen--Szab\'o~\cite{ors} introduced \emph{odd Khovanov homology} for links, an invariant distinct from the classical `\emph{even}' Khovanov homology. 
It arises from a \emph{chronological TQFT} $\oddTQFT : \chcobcat \rightarrow \bZ\gmod$, where $\chcobcat$ is the category of \emph{chronological cobordisms} as developed by Putyra~\cite{putyra14}, and $\bZ\gmod$ is category of graded abelian groups. 
This chronological TQFT was used by Vaz and the second author~\cite{naissevaz18} in their construction of an \emph{odd arc algebra} $OH^n_n$, and by Putyra and the second author in~\cite{naisseputyra} in their extension of odd Khovanov homology from links to tangles.

This raises the question whether the above connections between arc algebras and Springer fibers can be transferred from the even to the odd world.

As a first step 
in this direction, 
\cite{naissevaz18} introduced a topological space $T_n^n$ similar to the topological complex Springer fiber, but with $2$-spheres $S^2$ replaced with 1-spheres $S^1$. 
The authors of \cite{naissevaz18} also constructed isomorphisms between the graded center of $OH^n_n$, the cohomology ring $H^*(T^n_n)$ and  $OH^*(\spgrFib{n}{n}(\bC))$, where $OH^*(\spgrFib{n}{n}(\bC))$ is the `oddification' of  $H^*(\spgrFib{n}{n}(\bC))$ defined by Lauda--Russell~\cite{laudarussell14}.

\smallskip

The goal of this paper is to further the analogies between the even and odd worlds by providing geometric constructions of the chronological TQFT $\oddTQFT$ and of the odd arc algebras, similar to the even case described above.
The starting point is the simple observation that there are homeomorphisms $\bP^1(\bC)\cong S^2$ and $\bP^1(\bR)\cong S^1.$ This suggests the following idea, which we explore here: the \emph{real} Springer fibers $\spgrFib{n-k}{n}(\bR)$ should play the same role in the odd world as the complex Springer fibers $\spgrFib{n-k}{n}(\bC)$  do in the even world. 
We show that this is indeed the case.

\smallskip

It would be interesting to investigate whether real Springer fibers for nilpotent elements with more than two Jordan blocks can be used to construct odd versions of $\mathfrak{sl}_n$-link homology theories. However, this seems to be non-trivial. For example, the cohomology ring of $\mathbb{P}^n(\mathbb{R})$ has torsion for $n>1$, and a different rank than the one for $\mathbb{P}^n(\mathbb{C})$ (meaning that it would probably not give a categorification of the $\mathfrak{sl}_n$-Reshetikhin--Turaev invariant). In particular, we expect a geometric construction of an `odd version of $\mathfrak{sl}_n$-link homology' to reveal new geometric phenomena which are quite different from the $\mathfrak{sl}_2$ case.
Furthermore, our geometric story diverges from Lauda--Russell oddification of the cohomology of the Springer fiber outside of the two-row case, as in general their construction does not deliver a ring structure, while taking the cohomology of a real Springer fiber always gives a ring.

\subsection*{Main results} 
After extending $T^n_n$ to the $(n-k, k)$-case, we show our first main result:

\begin{citethm}{thm:topdescofrealspringer} 
There is a homeomorphism
\[
\spgrFib{n-k}{k}(\bR) \xrightarrow{\simeq} \oSpgrFib{n-k}{k}.
\]
\end{citethm}

This leads us directly to our next main result:

\begin{citethm}{thm:hiso}
There is an explicit isomorphism of rings
\[
h : OH^*(\spgrFib{n-k}{k}(\bC)) \xrightarrow{\simeq}
H^*(\oSpgrFib{n-k}{k}).
\]
\end{citethm}

This isomorphism together with \cref{thm:topdescofrealspringer} actually gives an explicit combinatorial description of $H^*(\spgrFib{n-k}{k}(\bR))$, since $OH^*(\spgrFib{n-k}{k}(\bC))$ is defined as a certain quotient of a (odd) polynomial ring. 
 
\smallskip

We then explain in \cref{sec:oddTQFT} how the chronological TQFT $\oddTQFT : \chcobcat \rightarrow \bZ\gmod$ from~\cite{ors} can be obtained from pullbacks and (exceptional) pushfowards along certain inclusion and projection maps between real hypertori. We formalize this by introducing the category $\DoubleTop$ of closed oriented manifolds and zigzags of continuous maps between them (see \cref{def:doubletop}).
Denote by $H^* : \DoubleTop \rightarrow \bZ\gmod$ the cohomology functor, sending covariant maps to pushforwards and contravariant maps to pullbacks. We show that there exists a  functor $\TopFunctor : \chcobcat \rightarrow \DoubleTop$, such that we have the following:
\begin{citecor}{cor:geomTQFT}
The diagram of functors 
\[
\begin{tikzcd}
\chcobcat \ar{rr}{\oddTQFT} \ar[swap]{dr}{\TopFunctor} && \bZ\gmod \\
&\DoubleTop \ar[swap]{ur}{H^*}&
\end{tikzcd}
\]
is commutative.  
\end{citecor}

The proof of \cref{cor:geomTQFT} uses the following theorem for $\bZ$-linear versions $\linchcobcat$ and $\linMDoubleTop$ of $\chcobcat$ and $\DoubleTop$ respectively, which are $\bZ$-graded symmetric monoidal categories (similar to monoidal supercategories considered in \cite{supermonoidal}, but with a full $\bZ$-grading instead of a $\bZ/2\bZ$ one, as in \cite[\S4]{naisseputyra}, see \cref{sec:gradedmon} below). 

\begin{citethm}{thm:geomTQFT}
The diagram of $\bZ$-graded symmetric monoidal functors 
\[
\begin{tikzcd}
\linchcobcat \ar{rr}{\linoddTQFT} \ar[swap]{dr}{\linTopFunctor} && \bZ\gmod \\
&\linMDoubleTop \ar[swap]{ur}{H^*}&
\end{tikzcd}
\]
is commutative.  
\end{citethm}

Using this result, we mimic in \cref{sec:oddArcAlg} the construction from~\cite{stroppelwebster12} in order to present the (generalized) odd arc algebra $OH_k^{n-k}$ as a convolution algebra with product $\star_f$ over the cohomology of the components 
$\{ T_\ba | \ba \in B_k^{n-k} \}$ 
of the real $(n-k,k)$-Springer fiber. 
We  also obtain an odd version $OK^{n-k}_k$ of the quasi-hereditary cover $K^{n-k}_k$ of the arc algebra.

 \begin{namedthm*}{\cref{lem:OHnnsame}, \cref{thm:convOHn} and \cref{thm:oddqhcover}}
 The composition law $\star_f$ gives $OH_k^{n-k}$ the structure of a graded non-associative algebra, which is isomorphic to the the odd arc algebra from~\cite{naissevaz18} in the case $OH^n_n$. Moreover,  in general $OH_k^{n-k}$ agrees modulo 2 with the usual generalized arc algebra $H^{n-k}_k$ from \cite{stroppel09,  chenkhovanov06}. 
 Similarly, $OK_k^{n-k}$ is a graded non-associative algebra, which is isomorphic modulo 2 with the quasi-hereditary cover $K^{n-k}_k$.
\end{namedthm*}

 Furthermore, by the same arguments, we also obtain that the complexes of bimodules used to define odd Khovanov homology for tangles in~\cite{naisseputyra} arise from similar geometric constructions with convolution products. 

Finally, we show that  the `odd' center $OZ(OH^{n-k}_k)$ of the odd arc algebra coincides with the cohomology of the real $(n-k,k)$-Springer fiber, using the same arguments as in the $(n,n)$-case considered in \cite{naissevaz18}.
\begin{citethm}{thm:oddcenter}
There is an isomorphism of rings
\[
H^*(\spgrFib{n-k}{k}(\bR))
 \xrightarrow{\simeq}
OZ(OH^{n-k}_k) 
\]
induced by the inclusions $T_\ba \hookrightarrow \spgrFib{n-k}{k}(\bR)$. 
\end{citethm}


\subsection*{Acknowledgments}
We thank Matthew B. Young for helpful conversations and providing numerous comments on a first version of the paper. 
We also warmly thank the referee for his/her awesome, detailed report. 
We also thank the Newton Institute in Cambridge and the Hausdorff Research Institute for Mathematics in Bonn for their hospitality. 
The first two authors are grateful to the Max Planck Institute for Mathematics in Bonn for its hospitality and financial support.


\section{A topological description of real two-row Springer fibers}\label{sec:topdescspgr}
We give an explicit topological description of the real two-row Springer fiber in terms of a space we call the topological real Springer fiber. 
This space is constructed as the union of certain diagonals inside a hypertorus. 

This section deals with real and complex varieties. We use \cite[Section 2]{poonen2017rational} as a standard reference for varieties over fields that are not necessarily algebraically closed. Further, for a variety $X$ over some subfield $F\subset \bR$ we will implicitly equip the sets of real and complex points $X(\bR)$ and $X(\bC)$ with their analytic topology, that is, the topology inherited from the Euclidian topology on $\bR$ and $\bC.$
If furthermore $X$ is smooth, we consider the topological spaces $X(\bR)$ and $X(\bC)$ with their natural structure of differentiable manifolds.

\subsection{Springer fibers}
Let $F$ be a field. For a partition $\lambda = (\lambda_1, \dots, \lambda_l)$  of $n \in \bN,$ denote by $z_\lambda : F^n \rightarrow F^n$ the nilpotent linear operator with $l$ Jordan blocks of size $\lambda_1, \dots, \lambda_l$. 
The \emph{Springer fiber} associated to $\lambda$ is the set
\[
\spgrFib{}{\lambda}(F) := \setbuild{V_\bullet= (V_1 \subset \dots \subset V_n=F^n)}{\dim_k V_i=i\text{ and }z_\lambda V_i \subset V_{i-1}},
\]
of all full flags of subspaces of $F^n$ preserved by $z_\lambda.$ In fact, $\spgrFib{}{\lambda}(F)$ is more than just a plain set. It is the set of $F$-points of a projective algebraic variety $\spgrFib{}{\lambda}.$ 
Moreover, $\spgrFib{}{\lambda}(\bR)$ and $\spgrFib{}{\lambda}(\bC)$, the \emph{real} and \emph{complex} Springer fibers, become compact Hausdorff spaces when equipped with their analytic topology.
\smallskip

We are concerned with the two-row case $\lambda = (n-k,k)$, where we assume w.l.o.g. $k\leq n/2$, and abbreviate
\[
\spgrFib{n-k}{k}:=\spgrFib{}{\lambda}.
\]

\subsection{Topological real Springer fiber} 
Denote by $S^1 := \setbuild{ (x,y) \in \bR^2}{x^2+y^2 = 1}$ the 1-sphere and by $p := (0,1)$ its north pole. Let $\torus^n := (S^1)^{\times n}$ be the $n$-dimensional torus with its usual topology. 
The topological real Springer fiber is the union of certain diagonal subsets in $\torus^n$ labeled by crossingless matchings.  It is an extension of the construction of the $(n,n)$ topological real Springer fiber from \cite{naissevaz18}, and follows the topological description of the complex two-row Springer fiber but replacing 2-spheres by 1-spheres. 

In the following definition we count points from left to right:
\begin{defn}
A (generalized) \emph{crossingless matching} of type $(n-k,k)$ is a combinatorial datum represented by a way of connecting $n$ points on a horizontal line by placing  $k$ arcs below and $n-2k$ semi-infinite vertical rays, without having them crossing each other. Each endpoint of an arc/ray is connected to a single point and we cannot have two arcs/rays connected to the same point.
\end{defn}

We write $\crossingless{n-k}{k}$ for the set of crossingless matchings of type $(n-k,k)$. For $\ba \in \crossingless{n-k}{k}$ we write $(i,j) \in \ba$ 
for $i < j$ 
 if there is an arc in $\ba$ connecting the $i$th point to the $j$th one. Similarly, we write $(i) \in \ba$ if there is a ray connected to the $i$th dot.

\begin{exe}
Here is an example of a $(8-3,3)$ crossingless matching:
\[
	\ba =\  
	\tikzdiagcm{
		\cupdiag{0}{3}{2};
		\cupdiag{1}{2}{1};
		\raydiag{4}{2};
		\cupdiag{5}{6}{1};
		\raydiag{7}{2};
	 }
\]
where $(5) \in \ba$ and $(8) \in \ba$, and $(1,4) \in \ba, (2,3) \in \ba$ and $(6,7) \in \ba$.
\end{exe}

\begin{defn}\label{def:topoddSpgr}
For $\ba \in \crossingless{n-k}{k}$ we define the subspace
\[
\torus_{\ba; n-k,k} := \left\{(x_1, \dots, x_n) \in \torus^n \left|
\begin{array}{ll}
 x_i = x_j,  &\text{ if } (i,j) \in \ba,\\
 x_i =  (-1)^i p, &\text{ if } (i) \in \ba ,
\end{array}
\right.
\right\} \subset T^n.
\]
Then, we write
\[
\oSpgrFib{n-k}{k} := \bigcup_{\ba \in \crossingless{n-k}{k}} \torus_{\ba; n-k,k} \subset T^n.
\]
We refer to $\oSpgrFib{n-k}{k}$ as the \emph{topological real Springer fiber}. 
\end{defn}

If no confusion can arise we will write $T_\ba$ instead of $T_{\ba;n-k,k}$.

\smallskip

In the remainder of this section will prove the following theorem.
\begin{thm}\label{thm:topdescofrealspringer} 
There is a homeomorphism
\[
\spgrFib{n-k}{k}(\bR) \xrightarrow{\simeq} \oSpgrFib{n-k}{k},
\]
such that real points of the irreducible components of $\spgrFib{n-k}{k}$ are mapped to the subsets $T_\ba\subset \oSpgrFib{n-k}{k}.$
\end{thm}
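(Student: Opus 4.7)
My plan is to mimic the strategy used in the complex case by Wehrli, Russell--Tymoczko, and Russell, exploiting the fact that their construction of the homeomorphism $\spgrFib{n-k}{k}(\bC) \xrightarrow{\simeq} \bigcup_\ba (S^2)^{\times n}_\ba$ is built from morphisms of schemes defined over $\bZ$. Applying the same construction to real points and replacing $\bP^1(\bC) \cong S^2$ with $\bP^1(\bR) \cong S^1$ should yield the desired homeomorphism onto $\oSpgrFib{n-k}{k}$. This extends the $(n,n)$-case treated in~\cite{naissevaz18} to the general two-row setting.

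I would start by fixing a Jordan basis of $F^n$ (for $F \in \{\bR,\bC\}$) adapted to $z_{(n-k,k)}$, and indexing the irreducible components $\cC_\ba \subset \spgrFib{n-k}{k}$ by crossingless matchings $\ba \in \crossingless{n-k}{k}$ via the standard cup-diagram description. Each $\cC_\ba$ is cut out by linear-algebraic conditions (defined over $\bZ$) that force certain $V_i$ to coincide with the explicit $z$-stable subspace determined by the arcs/rays of $\ba$ incident to position $i$. It is well known in the complex case that each $\cC_\ba$ is an iterated $\bP^1$-bundle which is in fact trivial, yielding an isomorphism $\cC_\ba \cong (\bP^1)^{\times k}$, where $k$ is the number of arcs of $\ba$. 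Since the trivialization is constructed algebraically from the Jordan basis, the same description applies over $\bR$. I would then define a map $\phi_\ba\colon \cC_\ba \to (\bP^1)^{\times n}$ sending $V_\bullet$ to the $n$-tuple whose $i$-th entry records the class of $V_i/V_{i-1}$ inside the relevant $2$-dimensional subspace, using projective coordinates read off from the Jordan basis. The constraints defining $T_\ba$ inside $\torus^n$ (namely $x_i = x_j$ for arcs $(i,j) \in \ba$, and $x_i = (-1)^i p$ for rays $(i) \in \ba$) then follow tautologically from how $\cC_\ba$ is built, with the alternating sign $(-1)^i$ coming from the parity of the Jordan basis vectors along each block. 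Passing to real points, $\phi_\ba(\bR)$ is a continuous map of compact spaces, both homeomorphic to $(S^1)^{\times k}$, and the explicit parametrization makes it a homeomorphism onto $T_\ba$.

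Finally, I would glue the maps $\phi_\ba(\bR)$ into a global map $\spgrFib{n-k}{k}(\bR) \to \oSpgrFib{n-k}{k}$. Agreement on the intersections $\cC_\ba(\bR) \cap \cC_\bb(\bR)$ is automatic, as the line coordinates $x_i$ are computed intrinsically from the flag $V_\bullet$. One then obtains a continuous bijection between compact Hausdorff spaces, hence a homeomorphism, which by construction sends each $\cC_\ba(\bR)$ onto $T_\ba$. The main obstacle I anticipate is ensuring that no components are lost or split upon passage from $\bC$ to $\bR$: concretely, that $\spgrFib{n-k}{k}(\bR) = \bigcup_\ba \cC_\ba(\bR)$, with each $\cC_\ba(\bR)$ non-empty and homeomorphic to $(S^1)^{\times k}$. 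This should follow from the fact that the trivial $\bP^1$-bundle structure on each complex $\cC_\ba$ descends to $\bR$, so that $\cC_\ba(\bR) \cong (\bP^1(\bR))^{\times k} \cong (S^1)^{\times k}$ is a closed real manifold; combined with the known exhaustion $\spgrFib{n-k}{k}(\bC) = \bigcup_\ba \cC_\ba(\bC)$, every $\bR$-point of the variety lies in some $\cC_\ba(\bR)$, giving the required decomposition.
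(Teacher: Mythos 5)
Your proposal takes a genuinely different route from the paper. You re-derive the decomposition directly over $\bR$ by constructing line-coordinate maps $\phi_\ba$ on each component $\cC_\ba$ and gluing. The paper instead works by Galois descent: it takes the known complex homeomorphism $\ell_\bC : Y_n(\bC) \xrightarrow{\simeq} \bP^1(\bC)^n$ of Cautis--Kamnitzer (and the identification of $\spgrFib{n-k}{k}(\bC)$ with the topological model inside it, due to Wehrli, Russell--Tymoczko, Russell, Wilbert), shows that $\ell_\bC$ is equivariant for the $\Gamma=\operatorname{Gal}(\bC/\bR)$-action, and passes to $\Gamma$-fixed points to land on $\bP^1(\bR)^n$. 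The two approaches are logically independent: yours would give a self-contained construction, while the paper's is shorter but leans on the complex literature.

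However, as written your proof has a gap precisely at the step you flag as an obstacle, and the obstacle is real rather than routine. You assert that ``the trivial $\bP^1$-bundle structure on each complex $\cC_\ba$ descends to $\bR$'' because ``the trivialization is constructed algebraically from the Jordan basis.'' This is not true in the relevant generality: the trivialization of Cautis--Kamnitzer (and of the component parametrizations in Wehrli/Russell--Tymoczko/Russell) is \emph{not} algebraic; it is built from Hermitian orthogonal complements, which are anti-holomorphic. Indeed, the varieties $Y_n$ and $\cC_\ba$ are algebraically iterated $\bP^1$-bundles that are typically nontrivial as algebraic bundles, and the paper's Remark 2.8(1) makes exactly the point that a smooth real projective $\bP^1$-bundle whose complex points are diffeomorphic to $\bP^1(\bC)^2$ can have real points that form a Klein bottle (odd Hirzebruch surfaces). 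So ``the complex trivialization descends'' requires a genuine argument, not just algebraicity of the defining equations.

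The content of that missing argument is precisely the compatibility of the orthogonal-complement trivialization with complex conjugation. In the paper this is isolated in Proposition 2.5 (conjugation commutes with taking $\perp$ and fixed points respect $\perp$), together with the observation that the auxiliary linear map $C:\bC^{2N}\to\bC^2$ commutes with conjugation, so that $\ell_\bC$ is $\Gamma$-equivariant. If you want to run your direct construction, you should make the analogous check explicit on each $\phi_\ba$: using a \emph{Euclidean}, not Hermitian, inner product over $\bR$ to form $\perp$, and verifying the arc constraint $x_i = x_j$ (after the orientation-reversing $\Psi(L^\perp)$-normalization that gives the alternating sign $(-1)^i$) rather than declaring it ``tautological.'' With those points in place the gluing and compactness argument in your last paragraph is fine.
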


\subsection{The complex Springer fiber} 
We first recall how the topological description of the \emph{complex two-row Springer fiber} $\spgrFib{n-k}{k}(\bC)$ works.

The main idea is to embed $\spgrFib{n-k}{k}(\bC)$ into an ambient space $Y_n(\bC),$ which was introduced and shown to be diffeomorphic to $\bP^1(\bC)^n$ by Cautis and Kamnitzer~\cite{cautis-kamnitzer}. 
The irreducible components of $\spgrFib{n-k}{k}(\bC),$ which have been described in the work of Spaltenstein~\cite{spaltenstein76},
Vargas~\cite{vargas79} and Fung~\cite{fung2003}, then admit an explicit description as certain `antidiagonal' subsets of $\bP^1(\bC)^n$ labeled by crossingless matchings.

\smallskip

Fix some integer $N\geq n.$ Denote the standard basis of $F^{2N}$ by $e_1,\dots,e_N,f_1,\dots,f_N$ and the nilpotent linear operator with Jordan blocks $(N,N)$ by $z.$ Hence $$z(e_i)=e_{i-1}\text{ and } z(f_i)=f_{i-1},$$ where $e_{0}=f_{0}=0.$

Let $Y_n(F)$ be the following set of partial flags of subspaces of $F^{2N}$ preserved by $z$,
\[
Y_n(F):=\setbuild{V_\bullet = (V_1 \subset \dots \subset V_n \subset F^{2N})}{\dim_F V_i=i\text{ and }z V_i \subset V_{i-1}}.
\]
In fact, $Y_n(F)$ is the set of $F$-points of a smooth projective variety $Y_n.$ We will identify $\spgrFib{n-k}{k}$ with a closed subvariety in $Y_n,$ such that
\[
\spgrFib{n-k}{k}(F)=\setbuild{V_\bullet\in Y_n(F)}{V_n=\langle e_1,\dots e_{n-k},f_1,\dots,f_{k}\rangle_F}\subset Y_n(F),
\]
where $\langle e_1,\dots e_{n-k},f_1,\dots,f_{k}\rangle_F$ is the $F$-span of $e_1,\dots e_{n-k},f_1,\dots,f_{k}$.

In general, $Y_n$ is a tower of algebraically non-trivial projective bundles. The complex points $Y_n(\bC)$, interpreted as a smooth manifold, split diffeomorphically into a product $Y_n(\bC)\cong\bP^1(\bC)^n.$ 

To see this, denote the standard basis of $\bC^2$ by $e,f$ and define a linear map 
\begin{equation}\label{eq:themapC}
C:\bC^{2N}\rightarrow \bC^2,\, C(e_i)=e\text{ and }C(f_i)=f.
\end{equation}
Equip $\bC^{2N}$ and $\bC^{2}$ with the standard Hermitian inner product, and denote $\bP^1(\bC)=\bP(\bC^2)$. Consider the map
$$\ell_\bC: Y_n(\bC)\rightarrow \bP^1(\bC)^n,\, (V_\bullet)\mapsto (C(V_i\cap V_{i-1}^\perp))_{i=1,\dots, n}.$$
\begin{prop}\emph{(Cautis--Kamnitzer \cite[Theorem 2.1]{cautis-kamnitzer}.)}
The map $\ell_\bC$ is a well-defined diffeomorphism.
\end{prop}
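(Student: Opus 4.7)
The plan is to prove the proposition in three steps: verify well-definedness pointwise, construct an explicit inverse, then upgrade the resulting bijection to a diffeomorphism.

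For well-definedness, the orthogonal complement $V_i \cap V_{i-1}^\perp$ is automatically a one-dimensional subspace of $\bC^{2N}$ by dimension count, so the real content is that $C$ does not vanish on this line. I use that $V_i \subset \ker z^i = \langle e_1,\dots,e_i,f_1,\dots,f_i\rangle_\bC$ (because $z^i V_i = 0$) to decompose any $v \in V_i$ as $v = \sum_{j=1}^i (a_j e_j + b_j f_j)$ with $C(v) = (\sum_j a_j)\,e + (\sum_j b_j)\,f$. Proceeding by induction on $i$, I assume $C(v) = 0$, i.e.\ $\sum_j a_j = 0 = \sum_j b_j$. The flag condition $zv \in V_{i-1}$ translates these relations through the shift induced by $z$, and combining with the orthogonality $v \perp V_{i-1}$ yields a linear system whose only solution is $v = 0$. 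The base case $i = 1$ is immediate because $V_1 \subset \langle e_1, f_1\rangle_\bC$ and $C$ restricts to an isomorphism there.

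For bijectivity, I construct an inverse inductively. Given $(L_1,\dots,L_n) \in \bP^1(\bC)^n$ and having built $V_{i-1}$, observe that $z^{-1}(V_{i-1}) = \{v \in \bC^{2N} : zv \in V_{i-1}\}$ has dimension $i+1$, so the transversal $z^{-1}(V_{i-1}) \cap V_{i-1}^\perp$ has dimension two. By the same calculation as in the well-definedness step, $C$ restricts to a linear isomorphism from this two-dimensional transversal onto $\bC^2$. Hence $L_i$ lifts uniquely to a line in $z^{-1}(V_{i-1}) \cap V_{i-1}^\perp$, and setting $V_i$ to be $V_{i-1}$ plus this line produces the next stage of the flag. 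This construction is manifestly continuous and inverts $\ell_\bC$ by design.

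To conclude, $Y_n(\bC)$ is compact Hausdorff (being the $\bC$-points of a projective variety) and $\bP^1(\bC)^n$ is compact Hausdorff, both of real dimension $2n$. The map $\ell_\bC$ is smooth by inspection of its defining formula, and the explicit inverse constructed above is likewise smooth, so $\ell_\bC$ is a diffeomorphism; alternatively, one can inductively trivialize the tower of algebraic $\bP^1$-fibrations $Y_n \to Y_{n-1}$ via the same inverse construction. The main obstacle is the well-definedness step, which requires a nontrivial interplay between the flag condition, the Hermitian orthogonality, and the linear map $C$ --- three structures that are \emph{a priori} unrelated --- in order to conclude $V_i \cap V_{i-1}^\perp \cap \ker C = 0$.
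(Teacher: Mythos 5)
The paper gives no proof of this proposition at all: it is quoted verbatim as Cautis--Kamnitzer's Theorem~2.1, so there is no paper argument to compare against. Your attempt must therefore be evaluated on its own merits as a proof of their theorem.

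Your framework is correct and matches Cautis--Kamnitzer's: the dimension counts ($\dim V_i \cap V_{i-1}^\perp = 1$, $\dim z^{-1}(V_{i-1}) = i+1$, $\dim z^{-1}(V_{i-1}) \cap V_{i-1}^\perp = 2$) are all right, and constructing the inverse by lifting $L_i$ into the two-dimensional space $z^{-1}(V_{i-1}) \cap V_{i-1}^\perp$ is the correct idea; the ``mutually inverse'' verification is also routine once the key lemma is in place. The problem is that the key lemma itself --- that $C$ restricted to $z^{-1}(V_{i-1}) \cap V_{i-1}^\perp$ is an isomorphism, equivalently that $\ker C \cap z^{-1}(V_{i-1}) \cap V_{i-1}^\perp = 0$ --- is never actually proved. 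The sentence ``the flag condition $zv \in V_{i-1}$ translates these relations through the shift $\dots$ and combining with the orthogonality $v \perp V_{i-1}$ yields a linear system whose only solution is $v = 0$'' is a placeholder, not an argument: no linear system is written down, and nondegeneracy of that system is exactly the content of the lemma, not something that follows from stating that there is a system. Moreover, the induction on $i$ is not set up to deliver this: the inductive hypothesis (injectivity of $C$ at stage $i-1$) concerns $z^{-1}(V_{i-2}) \cap V_{i-2}^\perp$, and you never explain how a hypothetical $v \in z^{-1}(V_{i-1}) \cap V_{i-1}^\perp \cap \ker C$ reduces to a question about that smaller space. You candidly flag this step as ``the main obstacle,'' which is accurate, but identifying the obstacle is not the same as clearing it. As written the well-definedness step --- and with it the inverse construction, since you invoke ``the same calculation'' there --- has a genuine gap.
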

For a crossingless matching $\ba\in B^{n-k}_{k}$, define
\[
\bP_\ba(\bC):=\setbuild{(L_i)_{i=1,\dots, n}}{L_i=L_j^\perp\text{ if } (i,j)\in \ba\text{ and }L_i=\langle e\rangle_\bC\text{ if } (i)\in \ba}\subset \bP^1(\bC)^n,
\]
and let 
\[
\bP_n^{n-k}(\bC):=\bigcup_{\ba\in B^{n-k}_{k}}\bP_\ba(\bC)\subset \bP^1(\bC)^n.
\]

\begin{thm}\label{thm:complexspringer} The map $\ell_\bC$ identifies $\spgrFib{n-k}{k}(\bC)$ with $\bP_n^{n-k}(\bC).$ Under $\ell_\bC$ the complex points of the irreducible components of $\spgrFib{n-k}{k}(\bC)$ correspond to the $\bP_\ba(\bC)$ for $\ba\in B^{n-k}_{k}.$
\end{thm}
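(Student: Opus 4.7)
The plan is to combine three ingredients: (a) the Cautis--Kamnitzer diffeomorphism $\ell_\bC \colon Y_n(\bC) \xrightarrow{\simeq} \bP^1(\bC)^n$ just recalled; (b) the classical parameterization, due to Spaltenstein, Vargas and Fung, of the irreducible components $K_\ba$ of $\spgrFib{n-k}{k}(\bC)$ by crossingless matchings $\ba\in B^{n-k}_k$, together with Fung's explicit recursive description of the flags lying in $K_\ba$; and (c) a direct verification that $\ell_\bC$ identifies $K_\ba(\bC)$ with $\bP_\ba(\bC)$. The theorem then follows from (c) by taking unions over $\ba\in B^{n-k}_k$, since $\spgrFib{n-k}{k}(\bC)=\bigcup_\ba K_\ba(\bC)$ and $\bP^{n-k}_n(\bC)=\bigcup_\ba \bP_\ba(\bC)$ by definition.

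For the inclusion $\ell_\bC(K_\ba(\bC))\subseteq \bP_\ba(\bC)$ I would argue by induction on the nesting depth of the arcs and rays of $\ba$. The base case treats an innermost arc $(i,i+1)$: Fung's description forces a rigid relation among $V_{i-1}$, $V_i$ and $V_{i+1}$ (essentially $V_{i+1}=z^{-1}(V_{i-1})\cap V_n$), and a computation modelled on the small example $V_1=\langle a e_1+b f_1\rangle$, $V_2\cap V_1^\perp=\langle -\bar b e_1+\bar a f_1\rangle$ shows that after applying $C$ the resulting lines $L_i$ and $L_{i+1}$ are orthogonal in $\bC^2$. For a ray $(i)\in \ba$, Fung's condition forces $V_i\cap V_{i-1}^\perp$ to land inside the span of the $e_j$'s (with the correct parity prescribed by the ambient embedding), so $L_i=\langle e\rangle$. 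Outer arcs then reduce to the base case by Fung's recursion, after collapsing the already-treated nested sub-matching.

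For the reverse inclusion, one compares dimensions. Fung shows that $K_\ba\cong (\bP^1)^k$, with one $\bP^1$ factor per cup in $\ba$; on the other side, $\bP_\ba(\bC)\cong (\bP^1(\bC))^k$, since each cup eliminates one free factor via $L_j=L_i^\perp$ and each ray fixes its factor to $\langle e\rangle$. Thus $\ell_\bC$ restricts to a continuous injection $K_\ba(\bC)\hookrightarrow \bP_\ba(\bC)$ between compact connected topological manifolds of the same real dimension $2k$, and being the restriction of a diffeomorphism of $Y_n(\bC)$ it is a homeomorphism onto its image; invariance of domain then forces this image to equal all of $\bP_\ba(\bC)$.

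The main obstacle is the bookkeeping in step (c), namely transporting Fung's flag conditions through orthogonal complementation in $\bC^{2N}$ and the projection $C$ without losing track of parities and signs. A cleaner alternative would be to invoke the topological description of $\spgrFib{n-k}{k}(\bC)$ in terms of $S^2$-diagonals due to Wehrli, Russell--Tymoczko and Russell, and merely check that the Cautis--Kamnitzer identification $Y_n(\bC)\cong \bP^1(\bC)^n\cong (S^2)^n$ intertwines their antipodal-diagonal conditions with the perpendicularity conditions defining $\bP_\ba(\bC)$; this would bypass the case-by-case analysis of individual flags entirely.
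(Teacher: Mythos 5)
The paper's own proof of this theorem is a one-line citation: for $k=n/2$ it points to Russell--Tymoczko and Wehrli, and for general $k$ to Russell and Wilbert. Those references already establish the identification in the form stated (or immediately convertible to it), so the paper performs no new verification here. Your main proposal is therefore a genuinely different route: you sketch a from-scratch proof using Fung's recursive description of the irreducible components $K_\ba$, an induction on arc nesting to show $\ell_\bC(K_\ba(\bC)) \subseteq \bP_\ba(\bC)$, and an invariance-of-domain argument for the converse. The skeleton of that argument is sound -- injectivity of $\ell_\bC|_{K_\ba(\bC)}$ is free since $\ell_\bC$ is a global diffeomorphism, both $K_\ba(\bC)$ and $\bP_\ba(\bC)$ are compact connected real $2k$-manifolds (using Fung's smoothness of $K_\ba$ for the source), and your base-case computation $L_2 = C(V_2 \cap V_1^\perp) = \langle -\bar{b}e + \bar{a}f\rangle \perp L_1$ is correct. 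What you lose relative to the paper is brevity: the induction in the forward inclusion, in particular keeping track of the parity condition forcing $L_i = \langle e\rangle_\bC$ at rays and reducing outer arcs after collapsing treated inner sub-matchings, would require writing out all of Fung's flag conditions and is precisely the bulk of the cited papers. What you gain is independence from those sources. Your ``cleaner alternative'' -- invoking the $S^2$-diagonal descriptions and checking compatibility with Cautis--Kamnitzer -- is essentially the paper's approach, except that the paper does not even need a separate compatibility check because the cited theorems already state the result relative to the Cautis--Kamnitzer coordinates; only the older $(n,n)$-references would require that translation step.
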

\begin{proof} For $k=n/2$, see \cite[Theorem 5.5]{russelltymoczko} and \cite[Theorem 1.2]{wehrli}. For general $k$, see \cite[Theorem 2.7]{russell11} and \cite[Theorem 1.15]{wilbert13}.
\end{proof}
\subsection{From Complex to Real} The topological description of the complex Springer fiber $\spgrFib{n-k}{k}(\bC)$ descends to a topological description of the real Springer fiber $\spgrFib{n-k}{k}(\bR).$ To see this, we first recall some standard facts about descent for the Galois extension $\bC/\bR$.

\smallskip

Denote by $\Gamma=\langle \sigma \rangle=\operatorname{Gal}(\bC/\bR)$ the Galois group of $\bC$ over $\bR,$ where $\sigma$ denotes complex conjugation. Let $m \in \bN$ be some integer. Then, $\Gamma$ acts coordinatewise on $\bC^m.$ This action is antilinear and we identify the fixed point set $(\bC^m)^\Gamma$ with $\bR^m.$ We equip $\bR^m$ and $\bC^m$ with their standard (Hermitian) inner products. 
For any subspace $V$ of the complex vector space $\bC^m$
the set of fixed points $V^\Gamma\subset\bR^m$ is naturally a subspace of the real vector space $\bR^{m}.$

We will need the following standard facts.
\begin{prop}\label{prop:complextoreal} 
\leavevmode
\begin{enumerate}
\item There is a bijection between $\Gamma$-stable subspaces of the complex vector space $\bC^m$ and subspaces of the real vector space $\bR^m$ given by
\[
\setbuild{V\subset \bC^m}{\sigma(V)=V} \xrightarrow{\simeq} \{V\subset \bR^m\},\, V\mapsto V^\Gamma.
\]
The bijection is inclusion and intersection preserving, and $\dim_\bC(V)=\dim_\bR(V^\Gamma).$

\item  Let $V\subset \bC^m$ be a subspace. Then
\[
\sigma(V)^\perp=\sigma(V^\perp)\subset \bC^m.
\]

\item Let $V\subset \bC^m$ be a $\Gamma$-stable subspace. Then, $V^\perp\subset\bC^m$ is also $\Gamma$-stable and 
\[
(V^\perp)^\Gamma=(V^\Gamma)^\perp\subset \bR^m.
\]
\end{enumerate}
\end{prop}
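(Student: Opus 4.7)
The plan is to establish the three parts of the proposition as a standard descent package, with the first part being the crux and the latter two following formally from the first together with the compatibility of complex conjugation with the Hermitian form.

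For part (1), I would exhibit an explicit inverse to $V \mapsto V^\Gamma$, namely the complexification $W \mapsto W \otimes_\bR \bC \subset \bC^m$, which can be viewed concretely as $W + iW$. To see $(W + iW)^\Gamma = W$, one checks directly that $\sigma$ acts as the natural conjugation and that the fixed points are exactly $W$. The more interesting direction is that a $\Gamma$-stable $V$ is recovered from $V^\Gamma$: given $v \in V$, write
\[
v = \tfrac{1}{2}(v + \sigma(v)) + i \cdot \tfrac{1}{2i}(v - \sigma(v)),
\]
where both summands lie in $V \cap (\bC^m)^\Gamma = V^\Gamma$ (using $\Gamma$-stability). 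Inclusion- and intersection-preservation are then formal, and the dimension equality follows because any $\bR$-basis of $V^\Gamma$ is a $\bC$-basis of $V$.

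For part (2), the key observation is that the standard Hermitian form satisfies $\langle \sigma(x), \sigma(y) \rangle = \sigma(\langle x, y \rangle)$, since conjugating all coordinates conjugates the sum $\sum x_i \overline{y_i}$. Then $w \in \sigma(V)^\perp$ iff $\langle w, \sigma(v) \rangle = 0$ for every $v \in V$, iff $\sigma\langle \sigma(w), v\rangle = 0$ for every $v \in V$, iff $\sigma(w) \in V^\perp$, i.e.\ $w \in \sigma(V^\perp)$.

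Part (3) is then a short assembly. $\Gamma$-stability of $V^\perp$ is immediate from (2) applied to $\sigma(V) = V$. For the identification $(V^\perp)^\Gamma = (V^\Gamma)^\perp$ inside $\bR^m$, I would first show the inclusion $\supseteq$: any $w \in \bR^m$ orthogonal to $V^\Gamma$ is, by $\bC$-bilinearity and the decomposition $V = V^\Gamma + i V^\Gamma$ from (1), orthogonal to all of $V$, hence lies in $(V^\perp)^\Gamma$. Equality then follows by a dimension count: by (1), $\dim_\bR (V^\perp)^\Gamma = \dim_\bC V^\perp = m - \dim_\bC V = m - \dim_\bR V^\Gamma = \dim_\bR (V^\Gamma)^\perp$.

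The only step that deserves care is the recovery of $V$ from $V^\Gamma$ in part (1); once that (and hence the decomposition $V = V^\Gamma + i V^\Gamma$) is in hand, everything else reduces to the conjugation-equivariance of the Hermitian form and an elementary dimension count. I do not anticipate any real obstacle, just some bookkeeping to make the bijection and its inverse explicit.
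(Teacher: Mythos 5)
The paper states this proposition as a collection of ``standard facts'' and offers no proof, so there is no paper argument to compare against. Your argument is a correct and essentially optimal proof of the claim. Part (1) is handled by exhibiting complexification as the inverse and using the decomposition $v = \tfrac12(v+\sigma(v)) + i\cdot\tfrac{1}{2i}(v-\sigma(v))$ to recover $V = V^\Gamma + iV^\Gamma$; this is exactly the right move and the dimension claim falls out of it. Part (2) correctly reduces to the identity $\langle\sigma(x),\sigma(y)\rangle = \sigma\langle x,y\rangle$, which holds for the standard Hermitian form, and part (3) follows by the inclusion-plus-dimension-count you describe. The one small slip is calling the Hermitian pairing ``$\bC$-bilinear'' when arguing $\supseteq$ in part (3); it is sesquilinear, not bilinear, but your argument only needs that $\langle w, iu\rangle = \pm i\langle w, u\rangle$, so nothing breaks. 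If you wanted to tighten part (3) slightly, you could avoid the dimension count altogether and prove both inclusions directly using (2) and the decomposition $V = V^\Gamma + iV^\Gamma$, but what you have is clean and complete.
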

Let $X$ be an algebraic variety defined over a subring of $\bR$. Then, $\Gamma$ acts on the set of complex points $X(\bC)$ of $X$ by complex conjugation, and one can identify the set of real points $X(\bR)$ of $X$ as $\Gamma$-fixed points in $X(\bC)$
\[
X(\bR)\cong X(\bC)^\Gamma.
\]
\begin{ex}
\label{ex:realflagvarasfixedpoints} 
\leavevmode
\begin{enumerate}
\item
Denote the set of flags in $F^m$ by
\[
X(F):=\setbuild{V_\bullet = (V_1 \subset \dots \subset V_m=F^{m})}{\dim_F V_i=i}.
\]
Then, $X(F)$ is the set of $F$-points of the \emph{flag variety} $X$, which is defined over $\bZ.$ 
The group $\Gamma$ acts on $V_\bullet\in X(\bC)$ via $V_i\mapsto \sigma(V_i)$.
Hence, the fixed points $X(\bC)^\Gamma$ are $\Gamma$-stable flags in $\bC^m$, which are naturally identified with flags in $\bR^m$ via
\[
X(\bC)^\Gamma \xrightarrow{\simeq} X(\bR), \quad (V_1 \subset \dots \subset V_m= \bC^{m}) \mapsto(V_1^\Gamma \subset \dots \subset V_m^\Gamma= \bR^{m}),
\]
using \cref{prop:complextoreal}(1).
\item In the same way, the spaces $Y_n$, $\spgrFib{n-k}{k}$ and $(\bP^1)^n$ are algebraic varieties defined over $\bZ$, and we can identify
\begin{align*}
Y_n(\bC)^\Gamma&\xrightarrow{\simeq}Y_n(\bR),\\
\spgrFib{n-k}{k}(\bC)^\Gamma&\xrightarrow{\simeq} \spgrFib{n-k}{k}(\bR), \\ 
 (\bP^1(\bC)^n)^\Gamma&\xrightarrow{\simeq} \bP^1(\bR)^n.
\end{align*}
\item The set $\bP_n^{n-k}(\bC)\subset \bP^1(\bC)^n$ is not a subvariety since its definition involves complex conjugation. Still, the action of $\Gamma$ on $\bP^1(\bC)^n$ restricts to an action on $\bP_n^{n-k}(\bC)$ using \cref{prop:complextoreal}(2). The set of $\Gamma$-fixed points can then be identified with $\bP_n^{n-k}(\bR)$ which is defined as follows. For a crossingless matching $\ba\in B^{n-k}_{k}$, define
\[
\bP_\ba(\bR):=\setbuild{(L_i)_{i=1,\dots, n}}{L_i=L_j^\perp\text{ if } (i,j)\in \ba\text{ and }L_i=\langle e\rangle_\bR\text{ if } (i)\in \ba}\subset \bP^1(\bR)^n,
\]
and let 
\[
\bP_n^{n-k}(\bR):=\bigcup_{\ba\in B^{n-k}_{k}}\bP_\ba(\bR)\subset \bP^1(\bR)^n.
\]
It is clear that the natural identification $(\bP^1(\bC)^n)^\Gamma\cong\bP^1(\bR)^n$ descends to $\bP_n^{n-k}(\bC)^\Gamma\cong\bP_n^{n-k}(\bR).$
\end{enumerate}
\end{ex}
Putting everything together we can now prove \cref{thm:topdescofrealspringer}.
\begin{proof}[Proof of \cref{thm:topdescofrealspringer}]
First, \cref{thm:complexspringer} provides us with the commutative diagram
\[
\begin{tikzcd}
 \spgrFib{n-k}{k}(\bC) 
 \ar{r}{\simeq}
 \ar{d}
 & 
 \bP_n^{n-k}(\bC)
 \ar{d}
 \\
  Y_n(\bC) 
  \ar["\ell_\bC"']{r}{\simeq}
  & 
  \bP^1(\bC)^n 
\end{tikzcd}
\]
where the vertical arrows are closed embeddings and the horizontal arrows are homeomorphisms. 

As described in \cref{ex:realflagvarasfixedpoints}, there is an action of the Galois group $\Gamma$ on the spaces in the diagram.
The action clearly commutes with the vertical arrows which are inclusions. 
To see that $\ell_\bC$ commutes with the action of $\Gamma$ we use 
\cref{prop:complextoreal} and that the linear map $C$ defined in \eqref{eq:themapC} is compatible with complex conjugation.

By passing to $\Gamma$-fixed points and using the identifications from \cref{ex:realflagvarasfixedpoints}, we obtain the diagram
\[
\begin{tikzcd}
 \spgrFib{n-k}{k}(\bR) 
 \ar{r}{\simeq}
 \ar{d}
 & 
 \bP_n^{n-k}(\bR)
 \ar{d}
 \\
  Y_n(\bR) 
  \ar["\ell_\bR"']{r}{\simeq}
  & 
  \bP^1(\bR)^n 
\end{tikzcd}
\]

Recall that $S^1 := \{ (x,y) \in \bR^2 | x^2+y^2 = 1\}$ denotes the 1-sphere with northpole $p := (0,1)$ and that the odd topological Springer fiber $\oSpgrFib{n-k}{k}$ is defined as a certain subspace of  $\torus^n := (S^1)^{n}.$   

Choose a diffeomorphism $\Psi:\bP^1(\bR) \xrightarrow{\simeq} S^1$ which maps $\langle e \rangle_\bR$ to $p$ and $\langle f \rangle_\bR$ to $-p$. Consider the diffeomorphism
\begin{align*}
\Phi:\bP^1(\bR)^n\rightarrow \torus^n,  \quad (L_1,L_2,L_3,L_4\dots)\mapsto (\Psi(L_1),\Psi(L_2^\perp),\Psi(L_3),\Psi(L_4^\perp),\dots),
\end{align*}
sending the $i$th line $L_i$ to $\Psi(L_i)$ if $i$ is odd, and to $\Psi(L_i^\perp)$ if $i$ is even. 
It is clear by the definitions that $\Phi$ identifies the subspaces $\bP_n^{n-k}(\bR)$ and $\oSpgrFib{n-k}{k}$, and the statement is proven.
\end{proof}
\begin{rem}
\leavevmode
\begin{enumerate}
\item The proof of \cref{thm:topdescofrealspringer} shows that $Y_n(\bR)$ is diffeomorphic to the direct product $\bP^1(\bR)^n$. We want to remark that this is not immediately implied by the fact that $Y_n(\bC)$ and $\bP^1(\bC)^n$ are diffeomorphic. 
For example, let $\bF_n=\bP(\mathcal{O}\oplus\mathcal{O}(n))$ be the Hirzebruch surface of degree $n\in \bZ,$ that is, the projectivization of the vector bundle $\mathcal{O}\oplus\mathcal{O}(n)$ on $\bP^1.$ Then, $\bF_n(\bC)$ is diffeomorphic to the product $\bP^1(\bC)^2.$ But the real points $\bF_n(\bR)$ are diffeormorphic to the product $\bP^1(\bR)^2$ if and only if $n$ is even. If $n$ is odd, $\bF_n(\bR)$ is diffeomorphic to a Klein bottle. The proof shows that ``there are no Klein bottles'' in $Y_n(\bR)$, or in the real Springer fiber $\spgrFib{n-k}{k}(\bR).$

\item In \cite{wilbert13}, the third author extends the topological description of complex two-row Springer fibers to complex two-row \emph{Spaltenstein varieties}, which are a partial flag generalization of Springer fibers. It is easy to check that the descent arguments from above also work in this case and hence yields a topological description of \emph{real} two-row Spaltenstein varieties.

\item One of the original motivations of Cautis--Kamnitzer to consider $Y_n(\bC)$ comes from the geometric Satake equivalence which links the representation theory of $\SL_2(\bC)$ with the geometry of the affine Grassmannian $\Gr(\bC)$ of $\PGL_2(\bC).$ Under this equivalence the $n$-fold tensor product $V^{\otimes n}$
of the fundamental representation $V$ of $\SL_2(\bC)$ corresponds to a certain $n$-fold convolution product $\Gr^{\omega}\tilde{\times}\dots\tilde{\times}\Gr^{\omega}(\bC)$ of a Schubert variety $\Gr^{\omega}(\bC)\subset\Gr(\bC)$ corresponding to the highest weight $\omega$ of $V.$ They prove that the varieties $\Gr^{\omega}\tilde{\times}\dots\tilde{\times}\Gr^{\omega}$ and  $Y_n$ are isomorphic. Hence, the convolution product  $\Gr^{\omega}\tilde{\times}\dots\tilde{\times}\Gr^{\omega}(\bR)$ in the \emph{real affine Grassmannian} is isomorphic to $Y_n(\bR).$
\end{enumerate}
\end{rem}

%
%


\section{Cohomology of real two-row Springer fibers}\label{sec:oddTopSpringer}
In \cite{laudarussell14}, Lauda and Russel define the \emph{odd cohomology} of the complex Springer fiber $OH^*(\spgrFib{n-k}{k}(\bC))$ \emph{ad hoc}, namely by using generators and relations: 
 they slightly modify a description of the cohomology ring of the complex Springer fiber $H^*(\spgrFib{n-k}{k}(\bC))$ by letting the generators anticommute instead of commute, and modifying the relations accordingly. 
The goal of this section is to show that $OH^*(\spgrFib{n-k}{k}(\bC))$ is isomorphic to the cohomology of the real Springer fiber $H^*(\spgrFib{n-k}{k}(\bR)).$ We thereby obtain an explicit algebraic description of $H^*(\spgrFib{n-k}{k}(\bR)).$

\smallskip
In order to do this, we will make use of the homeomorphism $T_k^{n-k} \cong \spgrFib{n-k}{k}(\bR)$ of \cref{thm:topdescofrealspringer}, and work with the space $T_k^{n-k}$. 
Because of this homeomorphism, we will call \emph{components of $T_k^{n-k}$} the subsets $T_{\ba}$, as they agree with the irreducible components of $ \spgrFib{n-k}{k}(\bR)$ under the above-mentioned homeomorphism.
From now on, we will always write $H^*(M) = H^*(M, \bZ)$ for the integral singular cohomology ring of a topological space $M$. Similarly $H_*(M)$ denotes the homology over $\bZ$.

\subsection{Odd cohomology of complex Springer fibers}

We recall Lauda--Russel's definition of $OH^*(\spgrFib{n-k}{k}(\bC))$ from \cite{laudarussell14}. Define the ring of \emph{odd polynomials} as
\begin{align*}
\OPol_n &:= \frac{\bZ\brak{x_1, \dots, x_n}}{(x_ix_j + x_jx_i = 0, \text{ for all } i \neq j)}.
\end{align*}

\begin{rem}
While one could be tempted to call $\OPol_n$ the `ring of supercommutative polynomials' this would suggest that the relation $x_i^2=0$ holds in $\OPol_n$ which is not the case. Hence the name `odd'.
\end{rem}

Let $S$ be an ordered subset of $\{x_1, \dots, x_n\}$, and write
\[
x_i^S := 
\begin{cases}
(-1)^{S(i) -1} x_i, & \text{ if } x_i \in S, \\
 0, &\text{ otherwise,}
\end{cases}
\]
where $S(i)$ is the position of $x_i$ in $S$.
Then, define the \emph{odd partially symmetric functions}
\[
\varepsilon_r^S := \sum_{1 \leq i_1 < \dots < i_r \leq n} x_{i_1}^S \cdots x_{i_r}^S.
\]
The \emph{odd Tanisaki ideal} $OI^{n-k}_{k}$ is the left ideal of $\OPol_n$ generated by the elements
\begin{align*}
OC^{n-k}_{k} := \left \{\varepsilon_r^S  \bigg| 
\parbox{20em}{$\ell \in \{1,\dots,n-k\}$, $ r \in \{1+\delta_\ell, \dots,k+\ell\}$,\\ \centering{$S \subset\{x_1,\dots, x_n\}$, $|S| = k + \ell$}} 
\right\},
\end{align*}
where 
\[
\delta_\ell := 
\begin{cases}
k, &\text{if } \ell \le n - 2k,  \\
n-k-\ell, &\text{otherwise.}
\end{cases}
\]

\begin{defn}[\cite{laudarussell14}]
The \emph{odd cohomology} of the complex $(n-k,k)$-Springer fiber is defined as the $\OPol_n$-module
\[
OH^*(\spgrFib{n-k}{k}(\bC)) := \OPol_n/OI^{n-k}_{k}.
\]
\end{defn}

In general, the odd cohomology of a complex Springer fiber does not possess a ring structure (at least not induced by the odd polynomial ring structure).   However, in the two-row case, it does. This is because $x_i^2 \in OI^{n-k}_{k}$ for all $i$, and therefore $OI^{n-k}_{k}$ is a two-sided ideal.

\subsection{Cohomology of the components of $\oSpgrFib{n-k}{k}$}

As in \cite{khovanov04,brundan-Stroppel1} we describe the components of $\oSpgrFib{n-k}{k}$ using arc and circle diagrams. 
For $\ba \in \crossingless{n-k}{k}$, we write $\oba$ for the horizontal mirror image of the crossingless matching.
For $\bb \in \crossingless{n-k}{k}$, we denote by $\obb\ba$ the circle diagram given by gluing $\obb$ on top of $\ba$. 
Note that we can view $\obb\ba$ as consisting of rays (connecting bottom-top), turnbacks (connecting bottom-bottom or top-top) and circles, and we denote the number of the latter by $|\obb\ba|$.

\begin{exe}
In $\crossingless{7-3}{3}$, we can have for example:
\begin{align*}
	\ba  &=  \  
	\tikzdiagcm{
		\cupdiag{0}{1}{1};
		\cupdiag{2}{3}{1};
		\raydiag{4}{2};
		\cupdiag{5}{6}{1};
	 }
&
	\bb  &=  \  
	\tikzdiagcm{
		\cupdiag{0}{3}{2};
		\cupdiag{1}{2}{1};
		\cupdiag{4}{5}{1};
		\raydiag{6}{2};
	 }
\\
	\obb &=  \  
	\tikzdiagocm{
		\cupdiag{0}{3}{2};
		\cupdiag{1}{2}{1};
		\cupdiag{4}{5}{1};
		\raydiag{6}{2};
	 }
&
	\obb\ba &= \ 
	\tikzdiagtcm{
		\cupdiag{0}{3}{-2};
		\cupdiag{1}{2}{-1};
		\cupdiag{4}{5}{-1};
		\raydiag{6}{-2};
		\cupdiag{0}{1}{1};
		\cupdiag{2}{3}{1};
		\raydiag{4}{2};
		\cupdiag{5}{6}{1};
	 }
\end{align*}
and thus $|\obb\ba| = 1$.
\end{exe}

Let $T^0 := \{\star\}$ be a point manifold. 
Since a crossingless matching $\ba \in B^{n-k}_k$ contains $k$ arcs and $n-2k$ rays, each component $\torus_{\ba; n-k,k}$ is homeomorphic to $\torus^k \times\{\star\}^{\times (n-2k)} \cong \torus^k$, and thus
\begin{align*}
H^*(\torus_{\ba})&\cong\bigBV^\bullet \brak{X_1, \dots, X_k}.
\end{align*}
If there is a turnback in $\obb\ba,$ then $T_\bb$ has one of its coordinate fixed to $\pm p$ and $T_\ba$ has the same coordinate fixed to $\mp p$ (by parity reasons), and thus $\torus_{\bb} \cap \torus_{\ba} = \emptyset$. 
Otherwise, $\torus_{\bb} \cap \torus_{\ba}$ is homeomorphic to $\torus^{|\obb\ba|}$. Indeed, arcs in $\ba$ and in $\bb$ equates coordinates in $\torus_{\bb} \cap \torus_{\ba}$, so that each circle component in $\obb\ba$ gives a set of equated coordinates in $\torus_{\bb} \cap \torus_{\ba}$. The rays fix the other coordinates to the same $\pm p$ (again by parity reasons), see \cref{ex:topspgrn4k1} below.

\begin{exe}\label{ex:topspgrn4k1}
Consider $n=4, k = 1$. We have $\crossingless{4-1}{1} = \{ \ba,\bb,\bc \}$ where 
\begin{align*}
\ba &=\  
	\tikzdiagcm{
		\cupdiag{0}{1}{1};
		\raydiag{2}{2};
		\raydiag{3}{2};
	 } 
&
\bb &= \ 
	\tikzdiagcm{
		\raydiag{0}{2};
		\cupdiag{1}{2}{1};
		\raydiag{3}{2};
	 }
& 
\bc &= \ 
	\tikzdiagcm{
		\raydiag{0}{2};
		\raydiag{1}{2};
		\cupdiag{2}{3}{1};
	 }
\end{align*}
and for example we obtain 
\begin{align*}
\obb\ba &=\  
	\tikzdiagtcm{
		\raydiag{0}{-2};
		\cupdiag{1}{2}{-1};
		\raydiag{3}{-2};
		\cupdiag{0}{1}{1};
		\raydiag{2}{2};
		\raydiag{3}{2};
	 } 
& 
\obc\ba &=\  
	\tikzdiagtcm{
		\raydiag{0}{-2};
		\raydiag{1}{-2};
		\cupdiag{2}{3}{-1};
		\cupdiag{0}{1}{1};
		\raydiag{2}{2};
		\raydiag{3}{2};
	 }
& 
\oba\ba &=\  
	\tikzdiagtcm{
		\cupdiag{0}{1}{-1};
		\raydiag{2}{-2};
		\raydiag{3}{-2};
		\cupdiag{0}{1}{1};
		\raydiag{2}{2};
		\raydiag{3}{2};
	 }
\end{align*}
Thus, we have
\begin{align*}
\torus_{\ba} \cap \torus_{\bb} &= \{(x,x,-p, p) | x \in S^1\} \cap \{(-p,x, x, p)  | x \in S^1\} = \{-p,-p,-p,p\}, \\
\torus_{\ba} \cap \torus_{\bc} &= \{(x,x,-p, p) | x \in S^1\} \cap \{(-p, p, x, x)  | x \in S^1\} = \emptyset\text{ and}\\
\torus_{\ba} \cap \torus_{\ba} &= \{(x,x,-p, p) | x \in S^1\} \cong S^1.
\end{align*}
\end{exe}

\subsubsection{Diagrammatic description}\label{ssec:diagdesccohomology}
As in \cite{russell11} (replacing homology with cohomology), one can describe $H^*(\torus_{\ba})$ diagrammatically using $\bZ$-linear combinations of dotted crossingless matchings of shape $\ba$. These are crossingless matchings which can carry extra decorations on their arcs, which we draw as a dots. For example we can have:
\begin{equation}\label{eq:exdotBn}
	\tikzdiagcm{
		\cupdiag{0}{3}{2} \mtikzdot;
		\cupdiag{1}{2}{1};
		\raydiag{4}{2};
		\cupdiag{5}{6}{1} \mtikzdot;
	}
\end{equation}
We also require that there can be no two dots at the same height, and a dot on a ray, or two dots on the same arc is zero:
\begin{align*}
	\tikzdiagh{
		\raydiag{0}{1}  node[pos=.5,tikzdot]{};
	 }
	 \ &:= 0,
&
	\tikzdiagcm{
		\cupdiag{0}{1}{1} node[pos=.4,tikzdot]{} node[pos=.8,tikzdot]{};
	 }
	 \ &:= 0,
\end{align*}
Furthermore, we consider these diagrams up to sliding of dots on the arcs, with the rule that there can be no two dots at the same height, and exchanging two distant dots permutes the sign: 
\begin{align*}
	\tikzdiagh{
		\raydiag{0}{1}  node[pos=.25,tikzdot]{};
		\raydiag{1}{1}  node[pos=.75,tikzdot]{};
	 }
\ &=- \ 
	\tikzdiagh{
		\raydiag{0}{1}  node[pos=.75,tikzdot]{};
		\raydiag{1}{1}  node[pos=.25,tikzdot]{};
	 }
\end{align*}
To translate from diagrams to elements of $H^*(\torus_{\ba})$, we first associate bijectively to each arc in $\ba$ a copy of $T^1$ in $\torus_{\ba} \cong T^1 \times \cdots \times T^1$. 
Then, we read a diagram from bottom to top, and whenever we encounter a dot we cup on the left with the corresponding  generating cohomology element of $H^*(\torus_{\ba})$.
For example, if $\ba$ is the same as in the  example above, we have $H^*(\torus_{\ba}) \cong \bigBV^\bullet \brak{X_1, X_2, X_3}$, with the $X_i$'s associated to the arcs as follow:
\begin{equation*}
	\tikzdiagcm{
		\cupdiag{0}{3}{2};
		\cupdiag{1}{2}{1};
		\raydiag{4}{2};
		\cupdiag{5}{6}{1};
		\node at(-.5,-1) {$X_1$};
		\node at(1.5,.5) {$X_2$};
		\node at(5.5,.5) {$X_3$};
	}
\end{equation*}
and then \cref{eq:exdotBn} gives $X_3 \wedge X_1$. 
One could extend the definition to handle multiplication from $H^*(\torus_{\ba})$, but the signs make the definition cumbersome. Since we will not use it, we leave the details to the reader.

\smallskip

Similarly, we describe the cohomology $H^*( \torus_{\bb}  \cap  \torus_{\ba} )$ by dotted circle diagrams of shape $\obb\ba$. For example, we can have the following:
\begin{equation*}
	\tikzdiagtcm{
		\cupdiag{0}{1}{-1};
		\cupdiag{2}{3}{-1};
		\raydiag{4}{-2};
		\cupdiag{5}{6}{-1};
		\cupdiag{0}{3}{2}\mtikzdot;
		\cupdiag{1}{2}{1};
		\raydiag{4}{2};
		\cupdiag{5}{6}{1};
	}
\end{equation*}
Then, the morphism $H^*(\torus_{\ba}) \rightarrow H^*(\torus_{\bb}  \cap  \torus_{\ba} )$ induced by the inclusion $ \torus_{\bb}  \cap  \torus_{\ba} \hookrightarrow   \torus_{\ba}$ is given by gluing a dotted crossingless matching of shape $\ba$ below the crossingless matching $\obb$. Similarly, $H^*(\torus_{\bb}) \rightarrow H^*( \torus_{\bb}  \cap  \torus_{\ba} )$ is given by gluing on top. We still have the rule that a dot on a ray is zero, and two dots on the same component is zero. 

\begin{rem}
The dotted diagram we use here are distinct from the dotted diagrams used to construct type D arc algebras from \cite{ehrig-stroppel1}.
\end{rem}

\subsection{Geometric realization of $OH^*(\mathfrak B_{n-k,k})$}

Our goal now will be to show the following theorem, which is a generalization to the $(n-k,k)$-case of one of the main results in \cite{naissevaz18}:
\begin{thm}\label{thm:hiso}
There is an isomorphism of rings
\[
h : OH^*(\spgrFib{n-k}{k}(\bC)) \xrightarrow{\simeq}
H^*(\oSpgrFib{n-k}{k}).
\]
\end{thm}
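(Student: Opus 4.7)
The strategy is to extend the construction from the $(n,n)$-case in \cite{naissevaz18} to the generalized $(n-k,k)$-setting: define a candidate ring homomorphism $h$ on generators of $\OPol_n$, check that the defining relations of the quotient $OH^*(\spgrFib{n-k}{k}(\bC))$ are satisfied, and finally verify that $h$ is bijective. For each $i \in \{1,\ldots,n\}$, I would define $h(x_i) \in H^*(\oSpgrFib{n-k}{k})$ by specifying its restriction to each component $T_\ba$. Using the diagrammatic description from \cref{ssec:diagdesccohomology}, the restriction $h(x_i)|_{T_\ba}$ is the dotted crossingless matching of shape $\ba$ carrying a single dot on the arc whose endpoints contain $i$ (with a sign determined by a fixed combinatorial rule that reduces to the one from \cite{naissevaz18} in the $(n,n)$-case), and is zero if $i$ is a ray endpoint of $\ba$. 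Compatibility of these local classes, i.e.\ that they glue to a global class through restriction to every intersection $T_\bb \cap T_\ba$, is checked on circle diagrams of shape $\obb\ba$: one verifies arc by arc that the two local diagrams restrict to the same dotted circle diagram, treating separately the cases in which the two arcs meeting at $i$ form a single circle in $\obb\ba$, lie in different circles, or form a turnback (in which case $T_\bb \cap T_\ba = \emptyset$).

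Next I would check that $h$ factors through the odd Tanisaki ideal. Anticommutation $h(x_i)h(x_j) + h(x_j)h(x_i) = 0$ for $i \neq j$ follows from graded-commutativity of the cup product combined with the observation that each $h(x_i)|_{T_\ba}$ is zero or of the form $\pm X_l$ in the exterior algebra $\bigBV^\bullet\langle X_1,\ldots,X_k\rangle$. Vanishing of $h(\varepsilon_r^S)$ for the generators of $OI^{n-k}_k$ is then verified component-wise: on any $T_\ba$, at most $k$ of the classes $x_i^S$ are nonzero (one per arc), and for each arc of $\ba$ its two endpoints contribute to $\varepsilon_r^S$ with opposite signs, which produces the cancellations needed for the sum to vanish. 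This combinatorial cancellation is the direct odd analogue of the argument proving the Tanisaki-type vanishing in the cohomology of the complex Springer fiber.

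To show $h$ is bijective I would argue as follows. For surjectivity, a Mayer--Vietoris argument stratified by the components $\{T_\ba\}$ (together with the diagrammatic description of $H^*(T_\ba)$ and $H^*(T_\bb \cap T_\ba)$ from \cref{ssec:diagdesccohomology}) shows that $H^*(\oSpgrFib{n-k}{k})$ is generated as a ring by the $X_j$-type classes assembled across components; these are exactly the $h(x_i)$ up to sign. For injectivity, I would compare $\bZ$-ranks: the rank of $OH^*(\spgrFib{n-k}{k}(\bC))$ was computed in \cite{laudarussell14} to equal the rank of the classical cohomology $H^*(\spgrFib{n-k}{k}(\bC))$, and the same Mayer--Vietoris sequence (degenerating since all components and intersections are tori, hence have torsion-free cohomology) combined with the dimension formula $T_\bb \cap T_\ba \cong T^{|\obb\ba|}$ from the previous subsection yields a matching rank for $H^*(\oSpgrFib{n-k}{k})$. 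Surjectivity plus equality of ranks then gives the isomorphism.

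The main obstacle is the coherent choice of signs. To have the $h(x_i)$ simultaneously glue to global classes, anticommute pairwise, and satisfy the odd Tanisaki relations, the signs appearing in the definition of $h(x_i)|_{T_\ba}$ must match precisely with the ordering conventions encoded in the symbols $x_i^S$, uniformly across all crossingless matchings. Handling rays, which do not appear in the $(n,n)$-case of \cite{naissevaz18} and which introduce additional fixed-coordinate constraints on components, is the technically delicate point; one expects that the sign rule dictated by compatibility on intersections of the form $T_\bb \cap T_\ba$ automatically produces the exact cancellations needed in $\varepsilon_r^S$, but tracking this requires careful combinatorial bookkeeping.
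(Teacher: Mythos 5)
Your proposal is correct in outline and reaches the same conclusion, but it diverges from the paper's proof in two ways worth noting.

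First, your definition of $h$: you propose defining each $h(x_i)$ by specifying its restriction to every $T_\ba$, then verifying compatibility on the intersections $T_\bb \cap T_\ba$. The paper avoids this gluing work entirely by defining $X_i$ globally---as the image of the generator of $H^*(S^1)$ under the composite $H^*(S^1) \to H^*(T^n) \to H^*(\oSpgrFib{n-k}{k})$ (projection followed by the inclusion $\oSpgrFib{n-k}{k}\hookrightarrow T^n$)---so that the local restrictions $\ba_i$ are compatible by construction. Your worry about a "coherent choice of signs" is also overcautious: the paper observes explicitly that, unlike in the even case of~\cite{khovanov04}, \emph{no} signs appear in $h_0(x_i)=X_i$. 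The combinatorial bookkeeping you anticipate is largely absorbed automatically by the exterior-algebra ordering in the diagrammatic notation; you do not get to choose signs in defining $h$.

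Second, injectivity: you count $\bZ$-ranks on both sides, using the rank computation of $OH^*$ from~\cite{laudarussell14} together with the degenerating Mayer--Vietoris sequence for $\oSpgrFib{n-k}{k}$. The paper instead reduces modulo $2$ and uses that both $OH^*(\spgrFib{n-k}{k}(\bC))\otimes\bZ/2\bZ$ and $H^*(\oSpgrFib{n-k}{k})\otimes\bZ/2\bZ$ coincide with the mod-$2$ cohomology of the \emph{even} topological Springer fiber, making $\tilde h$ an isomorphism mod $2$; since source and target are free abelian, injectivity follows. Your rank argument is valid (both sides are free, surjectivity plus equal rank gives bijectivity), but the paper's mod-$2$ trick is tighter: it sidesteps any case-by-case rank computation and makes the alignment with the even theory structural rather than numerical. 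For surjectivity, your Mayer--Vietoris generation argument is workable, but the paper gets there more economically via the surjection $H^*(T^n)\twoheadrightarrow H^*(\oSpgrFib{n-k}{k})$ (split injectivity on homology), through which $h_0$ factors.
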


We first show the existence of a ring morphism $h :  OH^*(\spgrFib{n-k}{k}(\bC)) \rightarrow H^*(\oSpgrFib{n-k}{k})$ by constructing a morphism $h_0 : \OPol_n \rightarrow H^*(\oSpgrFib{n-k}{k})$ and proving $OC^{n-k}_{k} \subset \ker h_0$. Then, we show that $h$ is both injective and surjective. 

\subsubsection{Cell decomposition}
Similarly as in~\cite[Lemma 4]{khovanov04} and in~\cite[\S 3.2]{russell11}, 
we construct a cell decomposition of $\torus^{n-k}_{k}$. The main difference is that in our case the cells can also be of odd dimension, since we work with copies of $S^1$ and not $S^2$.

\begin{defn}
We say that there is an arrow $\ba \rightarrow \bb$, for $\ba,\bb \in \crossingless{n-k}{k}$ if either there is a quadruple $1 \leq i < j < r < s \leq n$ such that
\begin{itemize}
\item $\ba$ and $\bb$ are the same everywhere except on $i,j,r,s$,
\item $(i,j),(r,s) \in \ba$, 
\item $(i,s),(j,r) \in \bb$,
\end{itemize}
or if there is a triple $1 \leq i < j < \ell \leq n$ such that
\begin{itemize}
\item $\ba$ and $\bb$ are the same everywhere except on $i,j,\ell$,
\item $(i), (j,\ell) \in \ba$,
\item $(i,j),(\ell) \in \bb$.
\end{itemize}
\end{defn}

Visually, we can draw this as
\begin{align*}
	\tikzdiagtcm[xscale=1.25]{
		\draw[white]  (0,0) .. controls (0,-2) and (3, -2) ..  (3,0); 
		\draw  (0,0) node[above]{$i$} .. controls (0,-1) and (1, -1) ..  (1,0) node[above]{$j$};
		\draw  (2,0) node[above]{$r$} .. controls (2,-1) and (3, -1) ..  (3,0) node[above]{$s$};
	 }
	\quad &\rightarrow \quad 
	\tikzdiagtcm[xscale=1.25]{
		\draw  (0,0) node[above]{$i$} .. controls (0,-2) and (3, -2) ..  (3,0) node[above]{$s$};
		\draw  (1,0) node[above]{$j$} .. controls (1,-1) and (2, -1) ..  (2,0) node[above]{$r$};
	 }
&\text{ or }&&
	\tikzdiagtcm[xscale=1.25]{
		\draw  (0,0) node[above]{$i$}  -- (0,-2) ;
		\draw  (1,0) node[above]{$j$} .. controls (1,-1) and (2, -1) ..  (2,0) node[above]{$\ell$};
	 }
	\quad &\rightarrow \quad 
	\tikzdiagtcm[xscale=1.25]{
		\draw  (0,0) node[above]{$i$} .. controls (0,-1) and (1, -1) ..  (1,0) node[above]{$j$};
		\draw  (2,0) node[above]{$\ell$}  -- (2,-2);
	 }
\end{align*}

 Then, we define a partial order on $\crossingless{n-k}{k}$ by saying $\ba \prec \bb$ if there exists a chain of arrows $\ba \rightarrow \ba_1\rightarrow \dots \rightarrow \ba_m \rightarrow \bb$. We extend this order arbitrarily to a total order $<$ on $\crossingless{n-k}{k}$.
 
\smallskip

For $\ba \in \crossingless{n-k}{k}$, let $\Gamma$ be the graph with vertices given by arcs of $\ba$ (hence $\Gamma$ has $k$ vertices), and there is an edge between $v$ and $w$ in $\Gamma$ whenever there exists $\bb \in \crossingless{n-k}{k}$ such that $\bb \rightarrow \ba$ and $\bb$ is obtained from $\ba$ by only reconnecting the endpoints of $v$ and $w$. In other words, there is an arc between $v$ and $w$ if one is contained in the other, without any other arc in-between. Let $M$ be the set of roots of $\Gamma$ (i.e. outermost arcs of $\ba$) and $E$ be the set of edges. For an arc $v = (i,j) \in \ba$ we write $v_1 := i$ and $v_2 := j$.

\begin{exe}\label{exe:arctograph}
We can have
\begin{align*}
	\ba &=\  
	\tikzdiagtcm{
		\cupdiag{0}{5}{3}; \node at(-.5,-.75){$v^1$};
		\cupdiag{1}{2}{1}; \node at(1.6,.5){$v^2$};
		\cupdiag{3}{4}{1}; \node at(3.6,.5){$v^3$};
		\raydiag{6}{2};
		\cupdiag{7}{8}{1}; \node at(7.6,.5){$v^4$};
		\raydiag{9}{2};
	 }
&
	\Gamma &= \ 
	\tikzdiagtcm{
		\draw (1,0) -- (.5,-1) node[pos=0, tikzdot]{};
		\draw (0,0) -- (.5,-1) node[pos=0, tikzdot]{} node[pos=1,fill=white, draw=black,circle,inner sep=2pt]{};
		\draw (3,0) -- (3,0)node[pos=0, fill=white, draw=black,circle,inner sep=2pt]{};
		\node at(.5,-2){$v^1$};
		\node at(0,1){$v^2$};
		\node at(1.25,1){$v^3$};
		\node at(3,1.125){$v^4$};
	 }
\end{align*}
where the roots $M$ are represented by a hollowed dot, and we have given a name to the arc to make them correspond with vertices of $\Gamma$
\end{exe}

\smallskip

 For each subset $J \subset E\sqcup M$ we construct a cell $c(J) \subset T_{\ba} \subset \torus^n$ by setting $(x_1, \dots, x_n) \in c(J)$ if
\begin{itemize}
\item $x_{v_1} = x_{w_1}$ if there is an edge between $v$ and $w$ in $\Gamma$ and it is contained in $J$,
\item $x_{v_1} \neq x_{w_1}$ if there is an edge between $v$ and $w$ in $\Gamma$ but it is not contained in $J$,
\item $x_{v_1} = (-1)^{v_1}p$ if $v \in M \cap J$,
\item $x_{v_1} \neq (-1)^{v_1}p$ if $v \in M$ and $v \notin J$. 
\end{itemize}
Clearly, $c(J) \cong \bR^{k-|J|}$ and $\bigsqcup_{J} c(J) = T_{\ba}$.

\begin{exe}
Consider $\Gamma$ in \cref{exe:arctograph}, and let $e^{1,2}$ be the edge between $v^1$ and $v^2$, and $e^{1,3}$ the edge between $v^1$ and $v^3$. 
We have $T_\ba = \{ x_1, x_2, x_2, x_3, x_3, x_1, -p, x_4, x_4, p  \} \subset T^{2n}$. 
Then, for example we can have
\begin{align*}
c(\emptyset) &= \{x_1, x_2, x_2, x_3, x_3, x_1, -p, x_4, x_4, p | x_1 \neq x_2, x_1 \neq x_3, x_1 \neq -p, x_4 \neq p \}, \\
c(\{ v^4 \} ) &= \{x_1, x_2, x_2, x_3, x_3, x_1, -p, p, p, p | x_1 \neq x_2, x_1 \neq x_3, x_1 \neq -p \}, \\
c(\{ e^{1,2} \}) &= \{ x_1, x_1, x_1, x_3, x_3, x_1, -p, x_4, x_4, p | x_1 \neq x_3, x_1 \neq -p, x_4 \neq p  \},  \\
c(\{ v^1, v^4, e^{1,2}, e^{1,3} \}) &= \{ -p,-p,-p,-p,-p,-p, -p, p, p, p \}.
\end{align*}
\end{exe}


Basically all results from~\cite[\S3.3]{russell11} (and their duals as in Khovanov's original paper~\cite{khovanov04}) hold for the odd case. However, the existence of cells in odd degree adds some subtleties. For example, it is not immediate that the inclusion 
\[
T_{<\ba} \cap T_\ba \hookrightarrow T_{\ba},
\]
 with $T_{<\ba} := \bigcup_{\bb < \ba} T_\bb$, induces a surjective morphism 
 \[
 H^*(T_{\ba}) \rightarrow H^*(T_{<\ba} \cap T_\ba).
 \]
 It is however still true and can be shown using an observation made in~\cite[Lemma~4.25]{naissevaz18}:

\begin{lem}\label{lem:Tbasurject}
The morphism  $H^*(T_{\ba}) \rightarrow H^*(T_{<\ba} \cap T_\ba)$ induced by the inclusion  $T_{<\ba} \cap T_\ba \hookrightarrow T_{\ba}$  is surjective.
\end{lem}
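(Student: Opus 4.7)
The plan is to use the cell decomposition $\{c(J)\}_{J \subset E \sqcup M}$ of $T_\ba$ introduced just above, and to argue via cellular cochains. The two ingredients I will use are: (i) $T_{<\ba} \cap T_\ba$ is a subcomplex of $T_\ba$, i.e., it is a union of cells $c(J)$; and (ii) the cellular cochain complex of $T_\ba$ has vanishing differentials. Granted both, the restriction of cellular cochains from $T_\ba$ to $T_{<\ba} \cap T_\ba$ is trivially surjective---one extends any cochain on the subcomplex by zero on the remaining cells---and because the differentials vanish this cochain-level restriction agrees with the induced map on cohomology, which is therefore surjective.

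For (i), I verify cell-by-cell that each $c(J)$ is either entirely contained in $T_{<\ba} \cap T_\ba$ or entirely disjoint from $T_{<\ba}$. The cell $c(\emptyset)$ has all coordinates generic: no enforced equality $x_{v_1} = x_{w_1}$ for edges of $\Gamma$ and no identification $x_{v_1} = (-1)^{v_1} p$ for roots $v \in M$. Any $T_\bb$ with $\bb \neq \ba$ imposes at least one such coincidence, violating genericity, so $c(\emptyset) \cap T_{<\ba} = \emptyset$. For $J \neq \emptyset$, each edge $e \in J \cap E$ corresponds to a quadruple-type arrow producing some $\bb \prec \ba$ with $c(J) \subset T_\bb$, while each root $v \in J \cap M$ imposes $x_{v_1} = (-1)^{v_1} p$, which is realizable by some $\bb \prec \ba$ via a triple-type arrow only when $\ba$ has a ray in a suitable position relative to $v$; when no such $\bb$ exists, a direct check shows $c(J)$ avoids every $T_\bb$ with $\bb < \ba$, preserving the required dichotomy.

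For (ii), the cell decomposition has exactly $\binom{k}{i}$ cells of dimension $i$ (those with $|J| = k - i$), which matches the torsion-free rank of $H^i(T_\ba) \cong H^i(T^k)$. Since the cellular cochain complex is free of the correct total rank in each degree and computes the correct cohomology (and $H^*(T^k)$ is torsion-free), all differentials must vanish. Consequently, the cellular cochains coincide with the cohomology classes, so the surjective cell-wise restriction $C^*(T_\ba) \twoheadrightarrow C^*(T_{<\ba} \cap T_\ba)$ immediately yields surjectivity on cohomology.

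The main obstacle is step (i), particularly the analysis of cells $c(J)$ involving roots of $\Gamma$: these correspond to coordinate fixings at $\pm p$, and whether such a cell lies in some $T_\bb$ with $\bb < \ba$ depends on the combinatorics of the rays of $\ba$. The observation from \cite[Lemma~4.25]{naissevaz18} provides the streamlined bookkeeping required to verify uniformly the dichotomy ``contained in $T_{<\ba}$ or disjoint from $T_{<\ba}$'', which is precisely what is needed to conclude that $T_{<\ba} \cap T_\ba$ is a subcomplex of $T_\ba$.
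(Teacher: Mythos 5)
Your proposal follows essentially the same strategy as the paper's proof: you use the cell decomposition $\{c(J)\}$, count cells to conclude that the cellular differentials vanish, and argue that $T_{<\ba}\cap T_\ba$ is a union of cells so that the cochain-level restriction is surjective. The paper packages the subcomplex claim by citing \cite[Lemma~3.12]{russell11} (converted to the odd case) and the observation in \cite[Lemma~4.25]{naissevaz18}, whereas you sketch a cell-by-cell dichotomy; your sketch is incomplete precisely in the case of roots of $\Gamma$, where you defer to ``a direct check'' and ultimately lean on the same reference, so the two proofs coincide in substance.
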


\begin{proof}
Note that $|E \sqcup M| = k$. 
Thus, the cell decomposition of $T_\ba$ described above has the same number of $\ell$-dimensional cells as the rank of $H^\ell(T_\ba)$, since $T_\ba \cong T^k$. 
 Hence, characteristic functions on these cells are independent additive generators for the cohomology. 
 By~\cite[Lemma~3.12]{russell11}, which can be directly converted to the odd case, the cell decomposition of $T_\ba$ restricts to a cell decomposition of $T_{< \ba} \cap T_\ba$. Therefore, $H^\ell(T_{<\ba} \cap T_\ba)$ is free and $H^\ell(T_{\ba}) \rightarrow H^\ell(T_{<\ba} \cap T_\ba)$ is surjective. 
\end{proof}

There is a  Mayer--Vietoris sequence associated to the union  $T_{\leq \ba} := T_{<\ba} \cup T_{\ba}$.
By exactness of it and \cref{lem:Tbasurject}, we deduce its boundary maps are all zero. 
As a consequence, we get the following proposition, which is the odd version of the dual of~\cite[Theorem~3.25]{russell11}.

\begin{prop}\label{thm:exactOH}
The following sequence of abelian groups is exact
\[
0 \rightarrow H^*(T_{\leq \ba}) \xrightarrow{\phi} \bigoplus_{\bb \leq \ba} H^*(T_\bb) \xrightarrow{\psi^-} \bigoplus_{\bb<\bc\leq \ba} H^*(T_\bb \cap T_\bc),
\]
where $\phi$ is induced by the inclusions $T_\bb \hookrightarrow T_{\leq \ba}$, and where we put
\[
\psi^- := \sum_{\bb < \bc \leq \ba} (\psi_{\bb, \bc} - \psi_{\bc, \bb}),
\]
with $\psi_{\bb, \bc} : H^*(T_\bb) \rightarrow H^*(T_\bb \cap T_\bc)$ induced by the inclusion $(T_\bb \cap T_\bc) \hookrightarrow T_\bb$.
\end{prop}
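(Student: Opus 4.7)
The plan is to argue by induction on the position of $\ba$ in the total order $<$ on $B^{n-k}_{k}$. The base case occurs when $\ba$ is the minimum: then $T_{\leq \ba} = T_\ba$ and the sequence reduces to the trivially exact $0 \to H^*(T_\ba) \xrightarrow{\id} H^*(T_\ba) \to 0$. For the inductive step, let $\ba'$ denote the immediate predecessor of $\ba$, so that $T_{\leq \ba} = T_{\leq \ba'} \cup T_\ba$ and $T_{\leq \ba'} \cap T_\ba = T_{<\ba} \cap T_\ba$. The Mayer--Vietoris long exact sequence for this cover, combined with \cref{lem:Tbasurject} (which says that the restriction $H^*(T_\ba) \twoheadrightarrow H^*(T_{<\ba} \cap T_\ba)$ is surjective), collapses into short exact sequences
\[
0 \to H^n(T_{\leq \ba}) \xrightarrow{\alpha} H^n(T_{\leq \ba'}) \oplus H^n(T_\ba) \xrightarrow{\beta} H^n(T_{<\ba} \cap T_\ba) \to 0
\]
for every $n$. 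Injectivity of $\phi$ then follows by composing $\alpha$ with the injectivity of $\phi' : H^*(T_{\leq \ba'}) \hookrightarrow \bigoplus_{\bb \leq \ba'} H^*(T_\bb)$ from the inductive hypothesis.

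For exactness at the middle term, I would start from $(x_\bb)_{\bb \leq \ba} \in \ker \psi^-$ and observe that $(x_\bb)_{\bb < \ba} \in \ker \psi'^-$; by the inductive hypothesis this sub-tuple lifts to some $y \in H^*(T_{\leq \ba'})$ with $y|_{T_\bb} = x_\bb$ for every $\bb < \ba$. The remaining matching conditions $\psi_{\bb,\ba}(x_\bb) = \psi_{\ba,\bb}(x_\ba)$ say precisely that $y|_{T_\bb \cap T_\ba} = x_\ba|_{T_\bb \cap T_\ba}$ for each $\bb < \ba$. To produce $w \in H^*(T_{\leq \ba})$ with $\phi(w) = (x_\bb)_\bb$ through the Mayer--Vietoris short exact sequence above, I need to upgrade this component-wise agreement to equality on the whole intersection, i.e.\ I need the injectivity of
\[
\iota : H^*(T_{<\ba} \cap T_\ba) \longrightarrow \bigoplus_{\bb < \ba} H^*(T_\bb \cap T_\ba).
\]
Once $\iota$ is injective, $(y, x_\ba)$ lies in $\ker \beta$ and lifts through $\alpha$ to the desired class $w$.

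The injectivity of $\iota$ is what I expect to be the main obstacle. Fortunately, the cover $T_{<\ba} \cap T_\ba = \bigcup_{\bb < \ba}(T_\bb \cap T_\ba)$ has the same combinatorial shape as the original problem: each $T_\bb \cap T_\ba$ is either empty or a subtorus of $T_\ba \cong T^k$ labelled by the circles of $\obb\ba$, and the collection forms a downward-closed arrangement of diagonal subtori. My plan is to set up the induction uniformly for arbitrary downward-closed families of diagonal subtori in $T^n$ that satisfy the analog of \cref{lem:Tbasurject}, so that the argument closes on the smaller arrangement inside $T_\ba$. The cell decomposition from \cref{ssec:diagdesccohomology} restricts well to $T_\ba$, and the surjectivity hypothesis of \cref{lem:Tbasurject} is inherited by the restricted arrangement, so the same induction goes through at each level. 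This mirrors the complex strategy in \cite[\S 3]{russell11}; the odd signs tracked through the diagrammatic calculus do not interfere with exactness, since they only affect the explicit identification of the restriction maps $\psi_{\bb,\bc}$ and not the structure of the Mayer--Vietoris sequence.
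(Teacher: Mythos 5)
Your strategy—Mayer--Vietoris for $T_{\leq\ba}=T_{<\ba}\cup T_\ba$, killing the connecting maps via \cref{lem:Tbasurject}, and inducting along the total order—is exactly what the paper does. The main difference is one of explicitness: the paper disposes of the whole induction in a single sentence by invoking the dual of \cite[Theorem 3.25]{russell11}, whereas you try to spell it out and in doing so correctly isolate the one non-trivial ingredient, namely that the component-wise restrictions $(x_\ba|_{T_\bb\cap T_\ba})_{\bb<\ba}$ must be assembled into a single class on $T_{<\ba}\cap T_\ba$, i.e.\ that
\[
\iota : H^*(T_{<\ba}\cap T_\ba)\longrightarrow\bigoplus_{\bb<\ba}H^*(T_\bb\cap T_\ba)
\]
is injective. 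This is precisely the content delegated to Russell's theorem, and your proposed resolution—set up the induction uniformly for downward-closed arrangements of diagonal subtori inside a fixed torus, so that the arrangement $\{T_\bb\cap T_\ba\}_{\bb<\ba}$ inside $T_\ba$ is handled by the inductive hypothesis at a smaller level—is in fact the mechanism of Russell's proof. Your sketch leaves unsaid how to index and totally order the sub-arrangement inside $T_\ba$ and why the analog of \cref{lem:Tbasurject} persists under restriction, but the remark that the cell decomposition of $T_\ba$ from \cref{lem:Tbasurject}'s proof restricts compatibly at each stage is the key point making that go through. So there is no wrong step and no genuinely different route, only a gap in execution that the paper itself fills by citation rather than by argument; if you carry out the uniform induction you sketched (essentially re-proving Russell's Theorem 3.25 in the $S^1$ setting), your proof is complete and self-contained, which is more than the paper provides.
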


As a direct corollary, by taking $\ba$ maximal, we obtain:
\begin{cor}\label{cor:injectiveTntosum}
The morphism 
\[
\rho : 
 H^*( \oSpgrFib{n-k}{k}) \hookrightarrow  \bigoplus_{\ba \in \crossingless{n-k}{k}} H^*(T_{\ba} )
\]
induced by the inclusions $T_{\ba } \hookrightarrow \oSpgrFib{n-k}{k}$ is injective.
\end{cor}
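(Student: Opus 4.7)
The plan is to deduce this directly from Proposition \ref{thm:exactOH} by specializing to a maximal crossingless matching. Recall that we extended the partial order $\prec$ on $\crossingless{n-k}{k}$ arbitrarily to a total order $<$. Since $\crossingless{n-k}{k}$ is finite, it has a (unique) maximum element $\ba_{\max}$ with respect to $<$.

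First I would observe that with this choice,
\[
T_{\leq \ba_{\max}} = \bigcup_{\bb \leq \ba_{\max}} T_\bb = \bigcup_{\bb \in \crossingless{n-k}{k}} T_\bb = \oSpgrFib{n-k}{k},
\]
so that the map $\phi$ from Proposition \ref{thm:exactOH}, applied with $\ba = \ba_{\max}$, is precisely the map
\[
\phi : H^*(\oSpgrFib{n-k}{k}) \longrightarrow \bigoplus_{\bb \in \crossingless{n-k}{k}} H^*(T_\bb)
\]
induced by the inclusions $T_\bb \hookrightarrow \oSpgrFib{n-k}{k}$, which is exactly $\rho$.

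Now I would invoke the exactness of
\[
0 \rightarrow H^*(T_{\leq \ba_{\max}}) \xrightarrow{\phi} \bigoplus_{\bb \leq \ba_{\max}} H^*(T_\bb) \xrightarrow{\psi^-} \bigoplus_{\bb<\bc\leq \ba_{\max}} H^*(T_\bb \cap T_\bc)
\]
from Proposition \ref{thm:exactOH}. The leftmost zero immediately gives that $\phi = \rho$ is injective, as required. Since no real obstacle arises — all the work was carried out in Lemma \ref{lem:Tbasurject} and Proposition \ref{thm:exactOH}, where the subtlety of odd-dimensional cells was handled — the only thing to check is that the inductive construction of $T_{\leq \ba}$ exhausts $\oSpgrFib{n-k}{k}$ when $\ba$ is maximal, which is immediate from the definition. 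This completes the proof.
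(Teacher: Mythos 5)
Your proof is correct and matches the paper's own argument: the paper derives \cref{cor:injectiveTntosum} from \cref{thm:exactOH} precisely ``by taking $\ba$ maximal,'' which is exactly the specialization you carry out, and the leftmost zero in the exact sequence gives injectivity of $\phi=\rho$.
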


Similarly to~\cite{khovanov04}, the inclusion $\oSpgrFib{n-k}{k} \hookrightarrow \torus^n$ induces a surjective (see \cref{lem:surjTn} below) morphism $H^*(\torus^n) \rightarrow H^*(\oSpgrFib{n-k}{k}).$ Composition with the projection $\torus^n \twoheadrightarrow S^1$ onto the $i$th factor induces a morphism $H^*(S^1) \rightarrow H^*(\oSpgrFib{n-k}{k})$. We write $X_i \in H^*(\oSpgrFib{n-k}{k}) $ for the image of the generator of $H^*(S^1)$ under this morphism. 
Let $\ba_i \in H^*(\torus_\ba)$ be the dotted crossingless matching of shape $\ba$ given by putting a dot on the arc connected to the $i$th endpoint, if there is one, and $\ba_i$ is zero if a ray is connected to the $i$th endpoint. 
Then, the morphism $H^*(\oSpgrFib{n-k}{k}) \rightarrow H^*(\torus_{\ba})$ induced by the inclusion $T_{\ba } \hookrightarrow \oSpgrFib{n-k}{k}$ sends $X_i$ to $\ba_i$. 
From that, we deduce that the morphism in \cref{cor:injectiveTntosum} takes the form
\begin{equation}\label{eq:monotorus}
\rho : H^*(\oSpgrFib{n-k}{k}) \hookrightarrow \bigoplus_{\ba \in \crossingless{n-k}{k}} H^*(\torus_{\ba}), \quad X_i \mapsto \sum_\ba \ba_i.
\end{equation}
Hence, we can describe $H^*(\oSpgrFib{n-k}{k})$ as the subring of $\bigoplus_{\ba \in \crossingless{n-k}{k}} H^*(\torus_{\ba})$ generated by the elements $\sum_\ba \ba_i$ for all $1 \leq i \leq n$.

\subsubsection{Existence of $h$}

This subsection is a generalization of~\cite[\S4]{naissevaz18}.
Define the ring morphism
\[
h_0 : \OPol_n \rightarrow H^*(\oSpgrFib{n-k}{k}), \quad x_i \mapsto X_i.
\]
Note that in contrast to the even case~\cite{khovanov04} there is no sign involved here. This is similar to the disappearance of the sign in the odd nilHecke algebra in~\cite{naisseputyra}.

\smallskip

We will now focus on proving that $OC^{n-k}_{k} \subset \ker h_0$. In fact, we will prove that $OC^{n-k}_{k} \subset \ker h_\ba$ for each $\ba \in \crossingless{n-k}{k}$,  where
\[
h_\ba : \OPol_n \rightarrow H^*(\torus_{\ba}), \quad x_i \mapsto \ba_i, 
\]
is the morphism obtained by composing $h_0$ with $\rho$ and projecting onto the summand $H^*(\torus_{\ba}).$

\begin{exe}
Consider $n = 4, k = 1$ as in \cref{ex:topspgrn4k1}. We identify elements of $H^*(T_\ba)$ with dotted diagrams as explained in~\cref{ssec:diagdesccohomology}. 
We have, among others, $-x_ix_j \in OC^{4-1}_{1}$ and $-x_1+x_2-x_3+x_4 \in OC^{4-1}_{1}$. 
Then, we compute
\begin{align*}
h_\ba(-x_1 x_2) &= - \ 
\tikzdiagcm{
		\cupdiag{0}{1}{1} node[pos=.2,tikzdot]{} node[pos=.6,tikzdot]{};
		\raydiag{2}{2};
		\raydiag{3}{2};
	 }  \ = 0, 
&
h_\ba(-x_1 x_3) &= - \ 
\tikzdiagcm{
		\cupdiag{0}{1}{1} node[pos=.5,tikzdot]{};
		\raydiag{2}{2}  node[pos=.65,tikzdot]{};
		\raydiag{3}{2};
	 }  \ = 0, 
\end{align*}
since two dots on the same component or a dot on a ray is zero, 
and
\[
h_\ba(-x_1+x_2-x_3+x_4)  = - \ 
\tikzdiagcm{
		\cupdiag{0}{1}{1} node[pos=.5,tikzdot]{};
		\raydiag{2}{2};
		\raydiag{3}{2};
	 } 
\ + \ 
\tikzdiagcm{
		\cupdiag{0}{1}{1}  node[pos=.5,tikzdot]{};
		\raydiag{2}{2};
		\raydiag{3}{2};
	 } 
\ - \ 
\tikzdiagcm{
		\cupdiag{0}{1}{1} ;
		\raydiag{2}{2} node[pos=.5,tikzdot]{};
		\raydiag{3}{2};
	 } 
\ + \ 
\tikzdiagcm{
		\cupdiag{0}{1}{1};
		\raydiag{2}{2};
		\raydiag{3}{2} node[pos=.5,tikzdot]{};
	 } 
\ = 0.
\]
Thus, $-x_ix_j \in \ker(h_\ba)$ and $-x_1+x_2-x_3+x_4 \in \ker(h_\ba)$. 
\end{exe}

Let $S \subset \{x_1, \dots, x_n\}$ with $|S| = k + \ell$. For each $R = \{x_{i_1}, \dots, x_{i_r}\} \subset S$ we write 
\[
\varepsilon_R^S := x_{i_1}^S \cdots x_{i_r}^S.
\]
Note that 
$
\varepsilon_r^S = \sum_{R \subset S, |R| = r} \varepsilon_R^S.
$

Let $E_{\ba}$ be the subset of $\{x_1, \dots, x_n\}$ such that $x_i \in E_{\ba}$ if $(i) \notin \ba$ ($i$ is not connected to a ray, thus is connected to an arc). Thus, $|E_{\ba}| = 2k$ and 
\[
h_\ba(\varepsilon_r^S) = h_\ba(\sum_{\substack{R \subset S \cap E_{\ba},\\ |R| = r}} \varepsilon_{R}^S ),
\]
since $h_\ba(x_i) = 0$ whenever $(i) \in \ba$.

Therefore, all arguments in~\cite[\S4]{naissevaz18} apply directly to the general case, and we have $OC_{n-k,k} \subset \ker h_\ba$. In particular, we get the following:

\begin{prop}
The morphism
\[
h : OH^*(\spgrFib{n-k}{k}(\bC)) \rightarrow H^*(\oSpgrFib{n-k}{k}), \quad x_i \mapsto X_i,
\]
is well-defined.
\end{prop}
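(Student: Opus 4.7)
The plan is to prove that $OC^{n-k}_{k} \subset \ker h_0$, which together with the paper's observation that $OI^{n-k}_{k}$ is a two-sided ideal of $\OPol_n$ (since $x_i^2 \in OI^{n-k}_k$) guarantees that $h_0$ descends to a well-defined map $h$ on the quotient $OH^*(\spgrFib{n-k}{k}(\bC)) = \OPol_n/OI^{n-k}_k$.

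First I would use the injection $\rho$ from \cref{cor:injectiveTntosum} to reduce the vanishing of $h_0(\varepsilon_r^S)$ to the simultaneous vanishing of $h_\ba(\varepsilon_r^S)$ for every crossingless matching $\ba \in \crossingless{n-k}{k}$. Thus, fixing $\ba$, I need to show $h_\ba(\varepsilon_r^S) = 0$ for every admissible triple $(\ell, r, S)$ prescribed by $OC^{n-k}_k$.

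Next, I would exploit the observation that $h_\ba(x_i) = \ba_i = 0$ whenever $(i) \in \ba$ (i.e.\ the $i$th endpoint is attached to a ray), which allows me to replace $\varepsilon_r^S = \sum_{R \subset S, |R|=r} \varepsilon_R^S$ by the restricted sum $\sum_{R \subset S \cap E_\ba, |R|=r} \varepsilon_R^S$, where $E_\ba = \{x_i : (i) \notin \ba\}$ has size $2k$. Moreover, within each monomial of this reduced sum, any two variables $x_i, x_j$ with $(i,j) \in \ba$ satisfy $h_\ba(x_i) = \pm h_\ba(x_j)$ (both are the dotted diagram whose unique dot sits on the shared arc), so monomials containing both endpoints of the same arc annihilate under $h_\ba$ by the rule that two dots on one component vanish. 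This collapses the problem to a signed sum indexed by subsets $R \subset S \cap E_\ba$ of size $r$ that pick out at most one endpoint from each arc of $\ba$.

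The remaining step, which I expect to be the main obstacle, is the combinatorial bookkeeping needed to check that this signed sum cancels whenever the parameters $r, \ell, |S|$ lie in the range prescribed by $OC^{n-k}_k$. Concretely, one must simultaneously track the sign of $x_i^S$ dictated by the position of $x_i$ in the ordered set $S$ and the parity of the permutation required to regroup chosen endpoints by their arc, and verify that the threshold $r \geq 1 + \delta_\ell$ (with the case split $\ell \leq n-2k$ versus $\ell > n-2k$) is exactly what forces pairwise cancellation. This is precisely the argument worked out in \cite[\S4]{naissevaz18} for the $(n,n)$-case; since the computation only depends on how $\ba$ and $S$ interact locally near each arc, and the presence of rays only has the effect of killing the excluded variables (handled already in the reduction to $E_\ba$), the same argument extends verbatim to the generalized $(n-k,k)$-case, concluding $OC^{n-k}_k \subset \ker h_\ba$ for every $\ba$ and hence $OC^{n-k}_k \subset \ker h_0$.
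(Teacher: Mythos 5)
Your proposal is correct and follows essentially the same route as the paper: reduce membership in $\ker h_0$ to membership in each $\ker h_\ba$ via the injection $\rho$, use $h_\ba(x_i)=0$ for rays to restrict the sum defining $\varepsilon_r^S$ to subsets of $E_\ba$, and then invoke the computation in \cite[\S4]{naissevaz18}, which carries over unchanged. The only addition you make is spelling out why the residual argument from the $(n,n)$ case transfers verbatim (rays having already been eliminated, the cancellation is local to arcs), which the paper leaves implicit but is accurate.
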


\subsubsection{Injectivity of $h$}
To show that $h$ is injective, it is enough to show that its reduction modulo 2 is injective, since $h$ is a morphisms between free abelian groups. 
For this, we use the fact that the even and odd cohomology coincide modulo $2$. 

\begin{lem}\label{lem:isoOHTMod2}
Tensoring with $\bZ/2\bZ$, $h$ induces an isomorphism
\[
\tilde h : OH^*(\spgrFib{n-k}{k}(\bC)) \otimes \bZ/2\bZ \xrightarrow{\simeq} H^*(\oSpgrFib{n-k}{k}) \otimes \bZ/2\bZ.
\]
\end{lem}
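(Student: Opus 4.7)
The approach is to reduce the mod 2 statement to the classical even case, where the corresponding isomorphism is known from Khovanov and Wilbert. First, I observe that modulo 2 the defining antisymmetry $x_i x_j + x_j x_i = 0$ of $\OPol_n$ becomes $x_i x_j = x_j x_i$, so $\OPol_n \otimes \bZ/2\bZ$ is canonically identified with the commutative polynomial algebra $\bF_2[x_1,\dots,x_n]$. Moreover, the sign $(-1)^{S(i)-1}$ in the definition of $x_i^S$ is trivial mod 2, so each odd partially symmetric function $\varepsilon_r^S$ reduces to the ordinary elementary symmetric polynomial in the variables of $S$. Hence $OI^{n-k}_{k} \otimes \bZ/2\bZ$ coincides with the mod 2 reduction of the classical Tanisaki ideal, and by the Tanisaki--De Concini--Procesi presentation this yields an identification of ungraded rings
\[
OH^*(\spgrFib{n-k}{k}(\bC)) \otimes \bZ/2\bZ \xrightarrow{\simeq} H^*(\spgrFib{n-k}{k}(\bC); \bZ/2\bZ), \qquad x_i \mapsto x_i.
\]

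Next, I would perform the analogous identification on the right-hand side. Since both $H^*(S^1; \bZ/2\bZ)$ and $H^*(S^2; \bZ/2\bZ)$ equal $\bF_2[X]/(X^2)$ as ungraded rings, each real component $T_\ba \cong T^k$ has the same mod 2 cohomology as its complex counterpart $\bP_\ba(\bC) \cong (S^2)^k$. The cell decomposition and Mayer--Vietoris arguments leading to \cref{thm:exactOH} and \cref{cor:injectiveTntosum} apply verbatim in both the real and complex settings, producing compatible injections into the direct sums over components. Transferring through the identification $H^*(T_\ba; \bZ/2\bZ) \cong H^*(\bP_\ba(\bC); \bZ/2\bZ)$, which matches $X_i \mapsto X_i$ on the natural generators, yields a canonical isomorphism of ungraded rings
\[
H^*(\oSpgrFib{n-k}{k}; \bZ/2\bZ) \xrightarrow{\simeq} H^*(\bP_n^{n-k}(\bC); \bZ/2\bZ), \qquad X_i \mapsto X_i.
\]

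Finally, I would invoke the classical isomorphism in the even world. Under the topological description of \cref{thm:complexspringer}, Khovanov (for $k=n/2$) and Wilbert (for general $k$) yield a ring isomorphism $H^*(\spgrFib{n-k}{k}(\bC); \bZ/2\bZ) \xrightarrow{\simeq} H^*(\bP_n^{n-k}(\bC); \bZ/2\bZ)$ sending $x_i \mapsto X_i$. Composing the three isomorphisms produces the desired $\tilde h$, and it coincides with the map induced by $h$ since $h_0$ is defined by $x_i \mapsto X_i$. The main obstacle is purely bookkeeping: ensuring that the three separate mod 2 identifications are all compatible on the generators $x_i$ and $X_i$, which follows immediately from their explicit constructions.
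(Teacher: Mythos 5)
Your proposal takes essentially the same route as the paper: reduce mod 2 to identify odd cohomology with even cohomology of the complex Springer fiber, identify the mod 2 cohomology of the real and complex topological models component-by-component (your $\bP_n^{n-k}(\bC)$ is the space the paper denotes $S_k^{n-k}$), and then invoke the known even isomorphism from Khovanov/Wilbert, checking compatibility on the generators $x_i \mapsto X_i$. The only cosmetic difference is that you derive the first identification directly from the definitions rather than citing Lauda--Russell for it; otherwise the three-step factorization and the commuting square are exactly the paper's argument.
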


\begin{proof}
By~\cite{laudarussell14} we know that $$ OH^*(\spgrFib{n-k}{k}(\bC)) \otimes \bZ/2\bZ \cong  H^*(\spgrFib{n-k}{k}(\bC)) \otimes \bZ/2\bZ,$$ identifying the $x_i$'s in both constructions while describing $ H^*(\spgrFib{n-k}{k}(\bC))$ using the usual Tanisaki ideal of the polynomial ring $\bZ[x_1,\dots,x_n]$. 
Moreover, we have 
\[
H^*(S^{n-k}_{k}) \otimes \bZ/2\bZ \cong H^*(\oSpgrFib{n-k}{k}) \otimes \bZ/2\bZ,
\]
where $S^{n-k}_{k}$ it the topological complex Springer fiber obtained by replacing $S^1$ with $S^2$ in \cref{def:topoddSpgr}. 
This isomorphism can be achieved by observing that $\bigoplus_{\ba \in \crossingless{n-k}{k}} H^*(S_{\ba})$ and $\bigoplus_{\ba \in \crossingless{n-k}{k}} H^*(\torus_{\ba})$ are isomorphic modulo 2, where $S_\ba$ is constructed as $T_\ba$ but using $S^2$ instead of $S^1$, and then that both injections $H^*(S^{n-k}_{k}) \hookrightarrow\bigoplus_{\ba \in \crossingless{n-k}{k}} H^*(S_{\ba}) $ and $H^*(\oSpgrFib{n-k}{k}) \hookrightarrow \bigoplus_{\ba \in \crossingless{n-k}{k}} H^*(\torus_{\ba})$ are the same modulo 2. All of this means that the following diagram is commutative:
\[
\begin{tikzcd}
OH^*(\spgrFib{n-k}{k}(\bC)) \otimes \bZ/2\bZ \ar[r, "\tilde h"] \ar{d}{\vsimeq}
& 
H^*(\oSpgrFib{n-k}{k}) \otimes \bZ/2\bZ,  \\
H^*(\spgrFib{n-k}{k}(\bC)) \otimes \bZ/2\bZ \ar[r, "\simeq"] 
& 
H^*(S^{n-k}_{k}) \otimes \bZ/2\bZ  \ar{u}{\vsimeqop}.
\end{tikzcd}
\]
In particular, $\tilde h$ is an isomorphism.
\end{proof}

\begin{prop}\label{prop:hinjective}
The morphism $h$ is injective.
\end{prop}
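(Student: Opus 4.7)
The plan is to use the standard principle that a map between torsion-free abelian groups is injective as soon as its reduction modulo $2$ is injective, together with \cref{lem:isoOHTMod2}. So the proof essentially has two ingredients to assemble.

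First I would verify that both the source and target of $h$ are torsion-free abelian groups. For the target $H^*(\oSpgrFib{n-k}{k})$, this is immediate from \cref{cor:injectiveTntosum}: it embeds into $\bigoplus_{\ba} H^*(T_\ba)$, and each $H^*(T_\ba) \cong \bigBV^\bullet \langle X_1,\dots,X_k\rangle$ is a free abelian group. For the source $OH^*(\spgrFib{n-k}{k}(\bC))$, torsion-freeness follows from Lauda--Russell's construction of the module structure on $\OPol_n / OI^{n-k}_{k}$, which has a $\bZ$-basis indexed by admissible sequences (analogous to the classical case).

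Next I would run the standard argument: suppose $a \in OH^*(\spgrFib{n-k}{k}(\bC))$ with $h(a) = 0$ and $a \neq 0$. Since the source is torsion-free, there is a maximal $k \geq 0$ such that $a = 2^k a'$ with $a' \notin 2 \cdot OH^*(\spgrFib{n-k}{k}(\bC))$. Then $2^k h(a') = h(a) = 0$ in the torsion-free group $H^*(\oSpgrFib{n-k}{k})$, so $h(a') = 0$. Reducing modulo $2$, we get $\tilde h(\overline{a'}) = 0$ with $\overline{a'} \neq 0$, which contradicts \cref{lem:isoOHTMod2} that states $\tilde h$ is an isomorphism. Hence $\ker(h) = 0$.

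The main obstacle is really just the verification of torsion-freeness of the source — but this is effectively established in Lauda--Russell by an explicit additive basis. Everything else is a two-line diagram chase using the mod~$2$ identification already proved in \cref{lem:isoOHTMod2}.
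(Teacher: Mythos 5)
Your proof is essentially the paper's own argument, unwound: the paper simply observes that a morphism of free abelian groups that is an isomorphism modulo $2$ must be injective, citing \cref{lem:isoOHTMod2}, and you spell out the same reduction. One small technical caution: you invoke ``torsion-free'' to extract a maximal $k$ with $a = 2^k a'$, but that step actually needs the group to be $2$-adically separated (e.g.\ $\bZ[1/2]$ is torsion-free yet every element is divisible by $2$); fortunately, you do establish freeness of both source and target, which is the hypothesis the paper uses and which makes the argument airtight.
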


\begin{proof} 
A morphism between free abelian groups inducing an isomorphism modulo 2 is injective. It is the case of $h$ because of~\cref{lem:isoOHTMod2}.
\end{proof}

\subsubsection{Surjectivity of $h$}

\begin{lem}\label{lem:surjTn}
The morphism 
\[
\pi : H^*(\torus^n) \twoheadrightarrow H^*(\oSpgrFib{n-k}{k}),
\]
induced by the inclusion $\oSpgrFib{n-k}{k}  \hookrightarrow \torus^n$ is surjective.
\end{lem}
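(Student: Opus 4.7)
The plan is to induct on the total order on $\crossingless{n-k}{k}$, showing that the restriction $H^*(T^n) \to H^*(T_{\leq \ba})$ is surjective for every $\ba$, where $T_{\leq \ba} := \bigcup_{\bb \leq \ba} T_\bb$. The lemma then follows by taking $\ba$ maximal, since $T_{\leq \ba_{\max}} = \oSpgrFib{n-k}{k}$.

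For the base case ($\ba$ minimal, so $T_{\leq \ba} = T_\ba$), the inclusion $T_\ba \hookrightarrow T^n$ embeds $T_\ba \cong T^k$ as a sub-torus, identifying $x_i = x_j$ for each arc $(i,j) \in \ba$ and fixing $x_i = (-1)^i p$ for each ray $(i) \in \ba$. The induced restriction on cohomology is then a surjective ring homomorphism sending each generator $Y_i$ either to the cohomology generator of the $S^1$-factor corresponding to the arc of $\ba$ containing $i$, or to zero if $i$ lies on a ray. For the inductive step, \cref{lem:Tbasurject} gives the surjectivity of $H^*(T_\ba) \to H^*(T_{<\ba} \cap T_\ba)$, which forces the Mayer--Vietoris sequence for $T_{\leq \ba} = T_{<\ba} \cup T_\ba$ to collapse into the short exact sequence
$$0 \to H^*(T_{\leq \ba}) \to H^*(T_{<\ba}) \oplus H^*(T_\ba) \to H^*(T_{<\ba} \cap T_\ba) \to 0.$$
Given $\eta \in H^*(T_{\leq \ba})$ with image $(\xi, \zeta)$, the inductive hypothesis lets us lift $\xi$ to some $\tilde{\xi} \in H^*(T^n)$; then $\alpha := \zeta - \tilde{\xi}|_{T_\ba}$ lies in $\ker(H^*(T_\ba) \to H^*(T_{<\ba} \cap T_\ba))$, and it remains to construct $\omega \in H^*(T^n)$ with $\omega|_{T_{<\ba}} = 0$ and $\omega|_{T_\ba} = \alpha$, for then $\tilde{\xi} + \omega$ lifts $\eta$.

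The hard part will be this final construction of $\omega$. The plan is to decompose $\alpha$ in the exterior-algebra basis $\{Y^{\ba, R}\}_R$ of $H^*(T_\ba)$, and for each basis monomial $Y^{\ba, R} = \bigwedge_{r \in R} Y^{\ba, r}$ appearing in the decomposition, produce a lift of the form $Y_{i_1} \wedge \cdots \wedge Y_{i_{|R|}} \in H^*(T^n)$, where $i_j$ is an endpoint of the arc $r_j \in R$. Such a product always restricts to $\pm Y^{\ba, R}$ on $T_\ba$; the freedom in the two-fold choice of endpoint per arc must be exploited to force simultaneous vanishing on every $T_\bb$ with $\bb < \ba$. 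The combinatorics of circle and through-strand components in the glued diagrams $\obb\ba$ (already used in the proof of \cref{lem:Tbasurject}) precisely characterize which monomials $Y^{\ba, R}$ lie in the kernel, and from this characterization one can guide the endpoint selection — picking endpoints that lie on rays of $\bb$, or that coincide inside a common arc of $\bb$ (making the wedge product vanish) — so that the lift restricts to zero on every relevant $T_\bb$. This combinatorial step parallels Khovanov's construction in the even case \cite{khovanov04}.
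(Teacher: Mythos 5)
Your overall strategy diverges substantially from the paper's. The paper's proof is a one-line duality argument: it cites \cite[Lemma~4.28]{naissevaz18} to say that the induced map $H_*(\oSpgrFib{n-k}{k}) \to H_*(\torus^n)$ on \emph{homology} is split injective (proved there with the cellular decomposition), and then surjectivity on cohomology follows by dualizing. No induction on the total order, no Mayer--Vietoris, no explicit lifts. So even if your inductive route could be pushed through, it is a considerably longer path to the same place.

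More importantly, the final lifting step has a genuine gap. You propose to decompose $\alpha \in \ker\bigl(H^*(T_\ba) \to H^*(T_{<\ba}\cap T_\ba)\bigr)$ in the exterior monomial basis and lift each monomial $Y^{\ba,R}$ separately by a product $Y_{i_1}\wedge\cdots\wedge Y_{i_{|R|}}$ that vanishes on $T_{<\ba}$. But any such lift $\omega_R$ with $\omega_R|_{T_{<\ba}}=0$ forces $Y^{\ba,R} = \omega_R|_{T_\ba}$ itself to vanish on $T_{<\ba}\cap T_\ba$; so your method only produces lifts for monomials that individually lie in the kernel, and implicitly assumes the kernel is spanned by such monomials. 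This is false. Already for $(n,k)=(4,2)$, with $\ba=\{(1,2),(3,4)\}\prec\bb=\{(1,4),(2,3)\}$, the intersection $T_\ba\cap T_\bb$ is the full diagonal of $T_\bb\cong T^2$, so the restriction $H^1(T_\bb)\to H^1(T_\ba\cap T_\bb)$ sends both generators $X_1,X_2$ to the same class. Hence $X_1\mp X_2$ lies in the kernel while neither $X_1$ nor $X_2$ does, and no monomial lift of the required form exists. (Here a lift \emph{does} exist, namely $Y_1-Y_2$, but it is not a single product $Y_{i_1}\wedge\cdots\wedge Y_{i_{|R|}}$ and is not produced by your endpoint-selection rule.) To repair the argument you would need to lift a genuine $\bbZ$-basis of the kernel rather than monomials, and exhibit preimages in $H^*(T^n)$ for arbitrary linear combinations; or to make a careful, non-canonical choice of $\tilde\xi$ that absorbs such differences. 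Either way the combinatorics grow well beyond what you sketch, which is precisely why the paper instead works on the homology side, where the cellular decomposition from the proof of \cref{lem:Tbasurject} hands you the split injection for free.
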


\begin{proof}
As in~\cite[Lemma~4.28]{naissevaz18}, the morphism $H_*(\oSpgrFib{n-k}{k}) \rightarrow H_*(\torus^n)$ induced on the homology is split injective. Hence, the dual morphism is surjective.
\end{proof}

Note that if we identify $H^*(\torus^n)$ with $H^*(\torus^1) \otimes \cdots \otimes H^*(\torus^1)$, then the cohomology generator of the $i$th factor $H^*(\torus^1)$ is sent by $\pi$ to $X_i$. 

\begin{prop} \label{prop:hsurjective}
The morphism $h$ is surjective.
\end{prop}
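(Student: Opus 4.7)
The plan is straightforward once one observes that $h$ is a ring homomorphism with $h(x_i) = X_i$, and that the classes $X_1, \dots, X_n$ generate $H^*(\oSpgrFib{n-k}{k})$ as a ring.

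To make the second point precise, I would first invoke \cref{lem:surjTn}, which gives the surjection $\pi : H^*(\torus^n) \twoheadrightarrow H^*(\oSpgrFib{n-k}{k})$ induced by the inclusion $\oSpgrFib{n-k}{k} \hookrightarrow \torus^n$. Since $\torus^n = (S^1)^{\times n}$, the K\"unneth formula identifies $H^*(\torus^n)$ with the exterior algebra $\bigBV^\bullet\langle y_1,\dots,y_n\rangle$, where $y_i$ is the pullback of the generator of $H^*(S^1)$ along the projection $\torus^n \twoheadrightarrow S^1$ onto the $i$th factor. By the very definition of $X_i$ (given just before \eqref{eq:monotorus}), we have $\pi(y_i) = X_i$. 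In particular, since the $y_i$'s generate $H^*(\torus^n)$ as a ring, their images $X_1,\dots,X_n$ generate $H^*(\oSpgrFib{n-k}{k})$ as a ring.

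Now, because $h : OH^*(\spgrFib{n-k}{k}(\bC)) \to H^*(\oSpgrFib{n-k}{k})$ is a ring homomorphism sending $x_i \mapsto X_i$, every product $X_{i_1}\cdots X_{i_r}$ lies in the image of $h$. Combined with the previous paragraph, this shows that the image of $h$ is a subring of $H^*(\oSpgrFib{n-k}{k})$ containing a set of ring generators, hence equals the whole ring. This gives surjectivity.

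No serious obstacle is expected: the only substantive ingredient is \cref{lem:surjTn}, which has already been proved, and the compatibility $\pi(y_i) = X_i$, which is essentially a tautology from the way the classes $X_i$ were defined. The proof is therefore a one-paragraph verification that the ring generators of the target all lie in the image of the ring map $h$.
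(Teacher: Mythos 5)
Your argument is correct and is essentially the same as the paper's: both rest on \cref{lem:surjTn} together with the observation that the $X_i$ are, by definition, the images under $\pi$ of the ring generators of $H^*(\torus^n)$. The paper packages this as a commutative triangle $\OPol_n \to H^*(\torus^n) \xrightarrow{\pi} H^*(\oSpgrFib{n-k}{k})$ showing $h_0$ (hence $h$) is surjective; you unwind the same fact by noting that the image of the ring map $h$ contains the ring generators $X_1,\dots,X_n$ of the target.
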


\begin{proof}
By definition of $h_0$, we obtain a commutative diagram
\[
\begin{tikzcd}
\OPol_{n} \ar[dr,"\simeq", sloped]  \ar[rr,"h_0"] & & H^*(\oSpgrFib{n-k}{k}).  \\
& H^*(\torus^n) \ar[ur,"\pi"',twoheadrightarrow]  &
\end{tikzcd}
\]
Thus, $h_0$ is surjective. It follows that $h$ is also surjective.
\end{proof}

\begin{proof}[Proof of \cref{thm:hiso}]
It follows immediatly from \cref{prop:hinjective} and \cref{prop:hsurjective} that $h$ is an isomorphism
\[
h :  OH^*(\spgrFib{n-k}{k}(\bC)) \xrightarrow{\simeq} H^*(\oSpgrFib{n-k}{k}),
\]
concluding the proof.
\end{proof}

Then, as a consequence of \cref{thm:topdescofrealspringer} and \cref{thm:hiso}, we obtain the following:

\begin{cor}\label{cor:hiso}
There is an isomorphism of rings
\[
H^*(\spgrFib{n-k}{k}(\bR))  \cong OH^*(\spgrFib{n-k}{k}(\bC)).
\]
\end{cor}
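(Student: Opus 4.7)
The statement is a direct consequence of the two main theorems of this section, and my plan is simply to chain them together. First I would invoke \cref{thm:topdescofrealspringer}, which provides a homeomorphism $\spgrFib{n-k}{k}(\bR) \xrightarrow{\simeq} \oSpgrFib{n-k}{k}$. Since singular cohomology with integer coefficients is a homotopy invariant functor into graded rings (with the cup product), any homeomorphism induces an isomorphism of graded rings on cohomology, so we obtain
\[
H^*(\spgrFib{n-k}{k}(\bR)) \xrightarrow{\simeq} H^*(\oSpgrFib{n-k}{k}).
\]

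Next I would apply \cref{thm:hiso} which gives the explicit ring isomorphism $h : OH^*(\spgrFib{n-k}{k}(\bC)) \xrightarrow{\simeq} H^*(\oSpgrFib{n-k}{k})$. Composing the inverse of $h$ with the isomorphism above produces the desired ring isomorphism
\[
H^*(\spgrFib{n-k}{k}(\bR)) \xrightarrow{\simeq} OH^*(\spgrFib{n-k}{k}(\bC)).
\]

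There is no real obstacle here beyond the two theorems already established: the topological identification of the real Springer fiber with the hypertorus model in \cref{thm:topdescofrealspringer} and the algebraic identification of $OH^*$ with the cohomology of that hypertorus model in \cref{thm:hiso}. Both are ring homomorphisms (the first from the functoriality of cup product under homeomorphisms, the second by construction of $h$ as a map out of $\OPol_n$ factoring through the surjection from $H^*(T^n)$), so the composition is automatically a ring isomorphism.
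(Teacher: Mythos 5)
Your proof is correct and matches the paper's argument exactly: the paper derives \cref{cor:hiso} precisely by composing the ring isomorphism on cohomology induced by the homeomorphism of \cref{thm:topdescofrealspringer} with the ring isomorphism $h$ of \cref{thm:hiso}. Nothing to add.
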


%
%


\section{Geometric contruction of  Ozsv\'ath--Rasmussen--Szab\'o  odd TQFT}\label{sec:oddTQFT}

Stroppel--Webster~\cite{stroppelwebster12} and the third author~\cite{wilbert13}  describe a realization of the TQFT $\TQFT$ associated to the Frobenius algebra $\bZ[X]/(X^2)$ 
using pullbacks and pushwards along maps associated to certain diagonal maps and projections between products of $2$-spheres. 

In \cite{ors} Ozsv\'ath--Rasmussen--Szab\'o define an \emph{odd} version $\oddTQFT$ of the TQFT $\TQFT$ by replacing tensor products of $\bZ[X]/(X^2)$ with exterior products.  
This section shows that there is also a realization of the odd TQFT $\oddTQFT$ obtained by replacing 2-spheres by 1-spheres in \cite{stroppelwebster12,wilbert13}.
To do so, we first recall an algebraic definition of the odd TQFT using the presentation given in \cite{putyra14}, and we discuss some general results for pullbacks and pushforwards in cohomology.

\subsection{The odd TQFT}\label{sec:defoddTQFT}

 \subsubsection{Category of chronological cobordisms}\label{sec:chcobcat}
 
 A \emph{chronological (2-)cobordism}~\cite{putyra14} is a $2$-cobordism equipped with a framed Morse function that separates critical
points, in the sense of Igusa~\cite{igusa}. Concretely, this means that the cobordism is equipped with a height function such that there are no two critical points (i.e. saddle point or cup/cap) at the same height, and saddle points and caps are given an orientation. We picture these cobordisms by diagrams read from bottom to top, and the orientation is given by an arrow. 

Chronological cobordisms form an `almost monoidal' category $\chcobcat$ where objects are disjoint unions of circles and morphisms are chronological cobordisms between them (up to certain isotopy equivalences not altering the relative height of the critical points, see \cite{putyra14} for details). 
Vertical composition is given by gluing along the boundary as usual, and horizontal composition of objects is given by disjoint union. The horizontal composition is given by \emph{right-then-left juxtaposition} of cobordisms, pushing all critical points of the cobordism on the left to the top:
\[
\tikzdiagh[scale=.5]{
	\draw (0,0) .. controls (0,.-.25) and (1,-.25) .. (1,0);
	\draw[dashed] (0,0) .. controls (0,.25) and (1,.25) .. (1,0);
	\draw (0,0) -- (0,2);
	\draw (1,0) -- (1,2);
	\draw (0,2) .. controls (0,1.75) and (1,1.75) .. (1,2);
	\draw (0,2) .. controls (0,2.25) and (1,2.25) .. (1,2);
	\node at(1.5,.25) {\tiny $\dots$};
	\node at(1.5,1.75) {\tiny $\dots$};
	\draw (2,0) .. controls (2,.-.25) and (3,-.25) .. (3,0);
	\draw[dashed] (2,0) .. controls (2,.25) and (3,.25) .. (3,0);
	\draw (2,0) -- (2,2);
	\draw (3,0) -- (3,2);
	\draw (2,2) .. controls (2,1.75) and (3,1.75) .. (3,2);
	\draw (2,2) .. controls (2,2.25) and (3,2.25) .. (3,2);
	\filldraw [fill=white, draw=black,rounded corners] (-.5,.5) rectangle (3.5,1.5) node[midway] { $W'$};
}
\  \otimes \ 
\tikzdiagh[scale=.5]{
	\draw (0,0) .. controls (0,.-.25) and (1,-.25) .. (1,0);
	\draw[dashed] (0,0) .. controls (0,.25) and (1,.25) .. (1,0);
	\draw (0,0) -- (0,2);
	\draw (1,0) -- (1,2);
	\draw (0,2) .. controls (0,1.75) and (1,1.75) .. (1,2);
	\draw (0,2) .. controls (0,2.25) and (1,2.25) .. (1,2);
	\node at(1.5,.25) {\tiny $\dots$};
	\node at(1.5,1.75) {\tiny $\dots$};
	\draw (2,0) .. controls (2,.-.25) and (3,-.25) .. (3,0);
	\draw[dashed] (2,0) .. controls (2,.25) and (3,.25) .. (3,0);
	\draw (2,0) -- (2,2);
	\draw (3,0) -- (3,2);
	\draw (2,2) .. controls (2,1.75) and (3,1.75) .. (3,2);
	\draw (2,2) .. controls (2,2.25) and (3,2.25) .. (3,2);
	\filldraw [fill=white, draw=black,rounded corners] (-.5,.5) rectangle (3.5,1.5) node[midway] { $W$};
}
\ := \  \tikzdiagh[scale=.5]{
	\draw (0,0) .. controls (0,.-.25) and (1,-.25) .. (1,0);
	\draw[dashed] (0,0) .. controls (0,.25) and (1,.25) .. (1,0);
	\draw (0,0) -- (0,4);
	\draw (1,0) -- (1,4);
	\draw (0,4) .. controls (0,3.75) and (1,3.75) .. (1,4);
	\draw (0,4) .. controls (0,4.25) and (1,4.25) .. (1,4);
	\node at(1.5,.25) {\tiny $\dots$};
	\node at(1.5,3.75) {\tiny $\dots$};
	\draw (2,0) .. controls (2,.-.25) and (3,-.25) .. (3,0);
	\draw[dashed] (2,0) .. controls (2,.25) and (3,.25) .. (3,0);
	\draw (2,0) -- (2,4);
	\draw (3,0) -- (3,4);
	\draw (2,4) .. controls (2,3.75) and (3,3.75) .. (3,4);
	\draw (2,4) .. controls (2,4.25) and (3,4.25) .. (3,4);
	\filldraw [fill=white, draw=black,rounded corners] (-.5,2.5) rectangle (3.5,3.5) node[midway] { $W'$};
} \ \tikzdiagh[scale=.5]{
	\draw (0,0) .. controls (0,.-.25) and (1,-.25) .. (1,0);
	\draw[dashed] (0,0) .. controls (0,.25) and (1,.25) .. (1,0);
	\draw (0,0) -- (0,4);
	\draw (1,0) -- (1,4);
	\draw (0,4) .. controls (0,3.75) and (1,3.75) .. (1,4);
	\draw (0,4) .. controls (0,4.25) and (1,4.25) .. (1,4);
	\node at(1.5,.25) {\tiny $\dots$};
	\node at(1.5,3.75) {\tiny $\dots$};
	\draw (2,0) .. controls (2,.-.25) and (3,-.25) .. (3,0);
	\draw[dashed] (2,0) .. controls (2,.25) and (3,.25) .. (3,0);
	\draw (2,0) -- (2,4);
	\draw (3,0) -- (3,4);
	\draw (2,4) .. controls (2,3.75) and (3,3.75) .. (3,4);
	\draw (2,4) .. controls (2,4.25) and (3,4.25) .. (3,4);
	\filldraw [fill=white, draw=black,rounded corners] (-.5,.5) rectangle (3.5,1.5) node[midway] { $W$};
} 
\]

\begin{rem}
The category $\chcobcat$ equipped with the right-then-left juxtaposition is not monoidal because in general 
 \[
 (f \otimes g) \circ (f' \otimes g') \neq (f \circ f') \otimes (g \circ g').
 \]
 In particular, this means it is not truly a horizontal composition in the 2-categorical sense. However, as we will see in  \cref{sec:linchcobcat}, $\chcobcat$ can be linearized to become a $\bZ$-graded monoidal category (alternatively, one can consider the Gray monoidal $2$-category of chronological cobordisms, giving a 3-category, see \cite{putyra14}). This means the name `horizontal composition' still make sense in our context. 
\end{rem}

Note that (pre)composing a split (resp. merge) with the twist gives the split (resp. merge) with the opposite orientation:
\begin{align} \label{eq:twisterorientation}
 \tikzdiagc[scale=.5]{
  \begin{scope}[shift={(0,2)}]
		\draw (0,0) .. controls (0,1) and (1,1) .. (1,2);
		\draw (1,0) .. controls (1,1) and (2,1) .. (2,0);
		\draw (3,0) .. controls (3,1) and (2,1) .. (2,2);
		\draw (0,0) .. controls (0,-.25) and (1,-.25) .. (1,0);
		\draw[dashed] (0,0) .. controls (0,.25) and (1,.25) .. (1,0);
		\draw (2,0) .. controls (2,-.25) and (3,-.25) .. (3,0);
		\draw[dashed] (2,0) .. controls (2,.25) and (3,.25) .. (3,0);
		\draw (1,2) .. controls (1,1.75) and (2,1.75) .. (2,2);
		\draw (1,2) .. controls (1,2.25) and (2,2.25) .. (2,2);
		\draw [->] (1.15,.15) -- (1.85,.15);
  \end{scope}
	\draw (0,0) .. controls (0,-.25) and (1,-.25) .. (1,0);
	\draw[dashed] (0,0) .. controls (0,.25) and (1,.25) .. (1,0);
	\draw (2,0) .. controls (2,-.25) and (3,-.25) .. (3,0);
	\draw[dashed] (2,0) .. controls (2,.25) and (3,.25) .. (3,0);
	\draw (0,0) .. controls (0,1) and (2,1) .. (2,2);
	\draw (1,0) .. controls (1,1) and (3,1) .. (3,2);
	\draw (2,0) .. controls (2,1) and (0,1) .. (0,2);
	\draw (3,0) .. controls (3,1) and (1,1) .. (1,2);
	%
	%
} 
 &\ = \ 
 \tikzdiagh[scale=.5]{
	\draw (0,0) .. controls (0,1) and (1,1) .. (1,2);
	\draw (1,0) .. controls (1,1) and (2,1) .. (2,0);
	\draw (3,0) .. controls (3,1) and (2,1) .. (2,2);
	\draw (0,0) .. controls (0,-.25) and (1,-.25) .. (1,0);
	\draw[dashed] (0,0) .. controls (0,.25) and (1,.25) .. (1,0);
	\draw (2,0) .. controls (2,-.25) and (3,-.25) .. (3,0);
	\draw[dashed] (2,0) .. controls (2,.25) and (3,.25) .. (3,0);
	\draw (1,2) .. controls (1,1.75) and (2,1.75) .. (2,2);
	\draw (1,2) .. controls (1,2.25) and (2,2.25) .. (2,2);
	\draw [<-] (1.15,.15) -- (1.85,.15);
} 
&
 \tikzdiagc[xscale=.5,yscale=-.5]{
  \begin{scope}[shift={(0,-2)}]
		\draw (0,0) .. controls (0,-.25) and (1,-.25) .. (1,0);
		\draw (0,0) .. controls (0,.25) and (1,.25) .. (1,0);
		\draw (2,0) .. controls (2,-.25) and (3,-.25) .. (3,0);
		\draw (2,0) .. controls (2,.25) and (3,.25) .. (3,0);
		\draw (0,0) .. controls (0,1) and (2,1) .. (2,2);
		\draw (1,0) .. controls (1,1) and (3,1) .. (3,2);
		\draw (2,0) .. controls (2,1) and (0,1) .. (0,2);
		\draw (3,0) .. controls (3,1) and (1,1) .. (1,2);
		\draw (0,2) .. controls (0,1.75) and (1,1.75) .. (1,2);
		\draw[dashed] (0,2) .. controls (0,2.25) and (1,2.25) .. (1,2);
		\draw (2,2) .. controls (2,1.75) and (3,1.75) .. (3,2);
		\draw[dashed] (2,2) .. controls (2,2.25) and (3,2.25) .. (3,2);
  \end{scope}
	\draw (0,0) .. controls (0,1) and (1,1) .. (1,2);
	\draw (1,0) .. controls (1,1) and (2,1) .. (2,0);
	\draw (3,0) .. controls (3,1) and (2,1) .. (2,2);
	%
	\draw[dashed] (1,2) .. controls (1,1.75) and (2,1.75) .. (2,2);
	\draw (1,2) .. controls (1,2.25) and (2,2.25) .. (2,2);
	\draw [->] (1.25,.45) -- (1.75,-.15);
} 
 &\ = \ 
 \tikzdiagh[xscale=.5,yscale=-.5]{
	\draw (0,0) .. controls (0,1) and (1,1) .. (1,2);
	\draw (1,0) .. controls (1,1) and (2,1) .. (2,0);
	\draw (3,0) .. controls (3,1) and (2,1) .. (2,2);
	\draw (0,0) .. controls (0,-.25) and (1,-.25) .. (1,0);
	\draw (0,0) .. controls (0,.25) and (1,.25) .. (1,0);
	\draw (2,0) .. controls (2,-.25) and (3,-.25) .. (3,0);
	\draw (2,0) .. controls (2,.25) and (3,.25) .. (3,0);
	\draw[dashed] (1,2) .. controls (1,1.75) and (2,1.75) .. (2,2);
	\draw (1,2) .. controls (1,2.25) and (2,2.25) .. (2,2);
	\draw [<-] (1.25,.45) -- (1.75,-.15);
} 
\end{align}

The morphism spaces in $\chcobcat$ are generated by the five elementary cobordisms:
\begin{center}
\begin{tabularx}{\textwidth}{CCCCC}
\centering \tikzdiagh[scale=.5]{
	\draw (0,0) .. controls (0,1) and (1,1) .. (1,2);
	\draw (1,0) .. controls (1,1) and (2,1) .. (2,0);
	\draw (3,0) .. controls (3,1) and (2,1) .. (2,2);
	\draw (0,0) .. controls (0,-.25) and (1,-.25) .. (1,0);
	\draw[dashed] (0,0) .. controls (0,.25) and (1,.25) .. (1,0);
	\draw (2,0) .. controls (2,-.25) and (3,-.25) .. (3,0);
	\draw[dashed] (2,0) .. controls (2,.25) and (3,.25) .. (3,0);
	\draw (1,2) .. controls (1,1.75) and (2,1.75) .. (2,2);
	\draw (1,2) .. controls (1,2.25) and (2,2.25) .. (2,2);
	\draw [->] (1.15,.15) -- (1.85,.15);
} 
&
 \tikzdiagh[xscale=.5,yscale=-.5]{
	\draw (0,0) .. controls (0,1) and (1,1) .. (1,2);
	\draw (1,0) .. controls (1,1) and (2,1) .. (2,0);
	\draw (3,0) .. controls (3,1) and (2,1) .. (2,2);
	\draw (0,0) .. controls (0,-.25) and (1,-.25) .. (1,0);
	\draw (0,0) .. controls (0,.25) and (1,.25) .. (1,0);
	\draw (2,0) .. controls (2,-.25) and (3,-.25) .. (3,0);
	\draw (2,0) .. controls (2,.25) and (3,.25) .. (3,0);
	\draw[dashed] (1,2) .. controls (1,1.75) and (2,1.75) .. (2,2);
	\draw (1,2) .. controls (1,2.25) and (2,2.25) .. (2,2);
	\draw [->] (1.25,.45) -- (1.75,-.15);
} 
 &
 \tikzdiagc[scale=.5]{
	\draw (1,2) .. controls (1,1) and (2,1) .. (2,2);
	\draw (1,2) .. controls (1,1.75) and (2,1.75) .. (2,2);
	\draw (1,2) .. controls (1,2.25) and (2,2.25) .. (2,2);
} 
 &
 \tikzdiagh[scale=.5]{
	\draw (1,0) .. controls (1,1) and (2,1) .. (2,0);
	\draw (1,0) .. controls (1,-.25) and (2,-.25) .. (2,0);
	\draw[dashed] (1,0) .. controls (1,.25) and (2,.25) .. (2,0);
	\draw[->] (1.5,1.5) [partial ellipse=0:270:3ex and 1ex];
} 
 &
 \tikzdiagh[xscale=-.5,yscale=.5]{
	\draw (1,0) .. controls (1,1) and (2,1) .. (2,0);
	\draw (1,0) .. controls (1,-.25) and (2,-.25) .. (2,0);
	\draw[dashed] (1,0) .. controls (1,.25) and (2,.25) .. (2,0);
	\draw[->] (1.5,1.5) [partial ellipse=0:270:3ex and 1ex];
} 
\\
 merge 
&
 split
 &
 birth
 &
 positive death
 &
 negative death
\end{tabularx}
\end{center}
together with a twist $$\tikzdiagc[scale=.5]{
	\draw (0,0) .. controls (0,-.25) and (1,-.25) .. (1,0);
	\draw[dashed] (0,0) .. controls (0,.25) and (1,.25) .. (1,0);
	\draw (2,0) .. controls (2,-.25) and (3,-.25) .. (3,0);
	\draw[dashed] (2,0) .. controls (2,.25) and (3,.25) .. (3,0);
	\draw (0,0) .. controls (0,1) and (2,1) .. (2,2);
	\draw (1,0) .. controls (1,1) and (3,1) .. (3,2);
	\draw (2,0) .. controls (2,1) and (0,1) .. (0,2);
	\draw (3,0) .. controls (3,1) and (1,1) .. (1,2);
	\draw (0,2) .. controls (0,1.75) and (1,1.75) .. (1,2);
	\draw (0,2) .. controls (0,2.25) and (1,2.25) .. (1,2);
	\draw (2,2) .. controls (2,1.75) and (3,1.75) .. (3,2);
	\draw (2,2) .. controls (2,2.25) and (3,2.25) .. (3,2);
}$$ acting as a Gray symmetry (see \cite{putyra14}). 
The unit object is given by the empty space $\emptyset$.

In other words, the hom-spaces of $\chcobcat$ can be combinatorially described as generated by vertical and horizontal compositions of the elementary cobordisms and the twist, modulo the relations that the twist $\tau$ distantly commutes with everything: 
\begin{align}\label{eq:twistcommutes}
(\tau \otimes 1) \circ (1 \otimes f) &= (1 \otimes f)  \circ (\tau \otimes 1), 
&
(1 \otimes \tau) \circ (f \otimes 1) &= (f \otimes 1)  \circ (1 \otimes \tau), 
\end{align}
for all cobordism $f$, and respects the symmetry relations:
\begin{align}\label{eq:symtwist}
\tau^2 &= 1,
&
(\tau \otimes 1) \circ (1 \otimes \tau) \circ (\tau \otimes 1) = (1 \otimes \tau)  \circ (\tau \otimes 1) \circ (1 \otimes \tau),
\end{align}
and  the exchange relations:
\begin{align*}
 \tikzdiagh[scale=.5]{
 
  \begin{scope}[shift={(0,2)}]
		%
		\draw (0,0) .. controls (0,1) and (2,1) .. (2,2);
		\draw (1,0) .. controls (1,1) and (3,1) .. (3,2);
		\draw (2,0) .. controls (2,1) and (0,1) .. (0,2);
		\draw (3,0) .. controls (3,1) and (1,1) .. (1,2);
		\draw (0,2) .. controls (0,1.75) and (1,1.75) .. (1,2);
		\draw (0,2) .. controls (0,2.25) and (1,2.25) .. (1,2);
		\draw (2,2) .. controls (2,1.75) and (3,1.75) .. (3,2);
		\draw (2,2) .. controls (2,2.25) and (3,2.25) .. (3,2);
  \end{scope}

	\draw (2,0) .. controls (2,.-.25) and (3,-.25) .. (3,0);
	\draw[dashed] (2,0) .. controls (2,.25) and (3,.25) .. (3,0);
	\draw (2,0) -- (2,2);
	\draw (3,0) -- (3,2);
	\draw (2,2) .. controls (2,1.75) and (3,1.75) .. (3,2);
	\draw[dashed] (2,2) .. controls (2,2.25) and (3,2.25) .. (3,2);
	\draw (0,2) .. controls (0,1) and (1,1) .. (1,2);
	\draw (0,2) .. controls (0,1.75) and (1,1.75) .. (1,2);
	\draw[dashed] (0,2) .. controls (0,2.25) and (1,2.25) .. (1,2);
}
\ &= \ 
\tikzdiagh[scale=.5]{
  \begin{scope}[shift={(2,0)}]
	\draw (1,2) .. controls (1,1) and (2,1) .. (2,2);
	\draw (1,2) .. controls (1,1.75) and (2,1.75) .. (2,2);
	\draw (1,2) .. controls (1,2.25) and (2,2.25) .. (2,2);
  \end{scope}
	\draw (1,0) .. controls (1,.-.25) and (2,-.25) .. (2,0);
	\draw[dashed] (1,0) .. controls (1,.25) and (2,.25) .. (2,0);
	\draw (1,0) -- (1,2);
	\draw (2,0) -- (2,2);
	\draw (1,2) .. controls (1,1.75) and (2,1.75) .. (2,2);
	\draw (1,2) .. controls (1,2.25) and (2,2.25) .. (2,2);
}
&
 \tikzdiagh[xscale=.5,yscale=-.5]{
 
  \begin{scope}[shift={(0,2)}]
		%
		\draw (0,0) .. controls (0,1) and (2,1) .. (2,2);
		\draw (1,0) .. controls (1,1) and (3,1) .. (3,2);
		\draw (2,0) .. controls (2,1) and (0,1) .. (0,2);
		\draw (3,0) .. controls (3,1) and (1,1) .. (1,2);
		\draw[dashed] (0,2) .. controls (0,1.75) and (1,1.75) .. (1,2);
		\draw (0,2) .. controls (0,2.25) and (1,2.25) .. (1,2);
		\draw[dashed] (2,2) .. controls (2,1.75) and (3,1.75) .. (3,2);
		\draw (2,2) .. controls (2,2.25) and (3,2.25) .. (3,2);
  \end{scope}

	\draw (2,0) .. controls (2,.-.25) and (3,-.25) .. (3,0);
	\draw (2,0) .. controls (2,.25) and (3,.25) .. (3,0);
	\draw (2,0) -- (2,2);
	\draw (3,0) -- (3,2);
	\draw[dashed] (2,2) .. controls (2,1.75) and (3,1.75) .. (3,2);
	\draw (2,2) .. controls (2,2.25) and (3,2.25) .. (3,2);
	\draw (0,2) .. controls (0,1) and (1,1) .. (1,2);
	\draw[dashed] (0,2) .. controls (0,1.75) and (1,1.75) .. (1,2);
	\draw (0,2) .. controls (0,2.25) and (1,2.25) .. (1,2);
	\draw[<-] (.5,.5) [partial ellipse=90:360:3ex and 1ex];
}
\ &= \ 
 \tikzdiagh[xscale=-.5,yscale=-.5]{
	\draw (2,0) .. controls (2,.-.25) and (3,-.25) .. (3,0);
	\draw (2,0) .. controls (2,.25) and (3,.25) .. (3,0);
	\draw (2,0) -- (2,2);
	\draw (3,0) -- (3,2);
	\draw[dashed] (2,2) .. controls (2,1.75) and (3,1.75) .. (3,2);
	\draw (2,2) .. controls (2,2.25) and (3,2.25) .. (3,2);
	\draw (0,2) .. controls (0,1) and (1,1) .. (1,2);
	\draw[dashed] (0,2) .. controls (0,1.75) and (1,1.75) .. (1,2);
	\draw (0,2) .. controls (0,2.25) and (1,2.25) .. (1,2);
	\draw[->] (.5,.5) [partial ellipse=180:450:3ex and 1ex];
}
&
 \tikzdiagh[xscale=.5,yscale=-.5]{
 
  \begin{scope}[shift={(0,2)}]
		%
		\draw (0,0) .. controls (0,1) and (2,1) .. (2,2);
		\draw (1,0) .. controls (1,1) and (3,1) .. (3,2);
		\draw (2,0) .. controls (2,1) and (0,1) .. (0,2);
		\draw (3,0) .. controls (3,1) and (1,1) .. (1,2);
		\draw[dashed] (0,2) .. controls (0,1.75) and (1,1.75) .. (1,2);
		\draw (0,2) .. controls (0,2.25) and (1,2.25) .. (1,2);
		\draw[dashed] (2,2) .. controls (2,1.75) and (3,1.75) .. (3,2);
		\draw (2,2) .. controls (2,2.25) and (3,2.25) .. (3,2);
  \end{scope}

	\draw (2,0) .. controls (2,.-.25) and (3,-.25) .. (3,0);
	\draw (2,0) .. controls (2,.25) and (3,.25) .. (3,0);
	\draw (2,0) -- (2,2);
	\draw (3,0) -- (3,2);
	\draw[dashed] (2,2) .. controls (2,1.75) and (3,1.75) .. (3,2);
	\draw (2,2) .. controls (2,2.25) and (3,2.25) .. (3,2);
	\draw (0,2) .. controls (0,1) and (1,1) .. (1,2);
	\draw[dashed] (0,2) .. controls (0,1.75) and (1,1.75) .. (1,2);
	\draw (0,2) .. controls (0,2.25) and (1,2.25) .. (1,2);
	\draw[->] (.5,.5) [partial ellipse=180:450:3ex and 1ex];
}
\ &= \ 
 \tikzdiagh[xscale=-.5,yscale=-.5]{
	\draw (2,0) .. controls (2,.-.25) and (3,-.25) .. (3,0);
	\draw (2,0) .. controls (2,.25) and (3,.25) .. (3,0);
	\draw (2,0) -- (2,2);
	\draw (3,0) -- (3,2);
	\draw[dashed] (2,2) .. controls (2,1.75) and (3,1.75) .. (3,2);
	\draw (2,2) .. controls (2,2.25) and (3,2.25) .. (3,2);
	\draw (0,2) .. controls (0,1) and (1,1) .. (1,2);
	\draw[dashed] (0,2) .. controls (0,1.75) and (1,1.75) .. (1,2);
	\draw (0,2) .. controls (0,2.25) and (1,2.25) .. (1,2);
	\draw[<-] (.5,.5) [partial ellipse=90:360:3ex and 1ex];
}
\end{align*}
\begin{align*}
\tikzdiagh[scale=.5]{
  \begin{scope}[shift={(1,2)}]
		%
		\draw (0,0) .. controls (0,1) and (2,1) .. (2,2);
		\draw (1,0) .. controls (1,1) and (3,1) .. (3,2);
		\draw (2,0) .. controls (2,1) and (0,1) .. (0,2);
		\draw (3,0) .. controls (3,1) and (1,1) .. (1,2);
		\draw (0,2) .. controls (0,1.75) and (1,1.75) .. (1,2);
		\draw (0,2) .. controls (0,2.25) and (1,2.25) .. (1,2);
		\draw (2,2) .. controls (2,1.75) and (3,1.75) .. (3,2);
		\draw (2,2) .. controls (2,2.25) and (3,2.25) .. (3,2);
  \end{scope}
	\draw (0,0) .. controls (0,1) and (1,1) .. (1,2);
	\draw (1,0) .. controls (1,1) and (2,1) .. (2,0);
	\draw (3,0) .. controls (3,1) and (2,1) .. (2,2);
	\draw (0,0) .. controls (0,-.25) and (1,-.25) .. (1,0);
	\draw[dashed] (0,0) .. controls (0,.25) and (1,.25) .. (1,0);
	\draw (2,0) .. controls (2,-.25) and (3,-.25) .. (3,0);
	\draw[dashed] (2,0) .. controls (2,.25) and (3,.25) .. (3,0);
	\draw (1,2) .. controls (1,1.75) and (2,1.75) .. (2,2);
	\draw[dashed] (1,2) .. controls (1,2.25) and (2,2.25) .. (2,2);
	\draw [->] (1.15,.15) -- (1.85,.15);
	\draw (4,0) .. controls (4,-.25) and (5,-.25) .. (5,0);
	\draw[dashed] (4,0) .. controls (4,.25) and (5,.25) .. (5,0);
	\draw (3,2) .. controls (3,1.75) and (4,1.75) .. (4,2);
	\draw[dashed] (3,2) .. controls (3,2.25) and (4,2.25) .. (4,2);
	\draw (4,0) .. controls (4,1) and (3,1) .. (3,2);
	\draw (5,0) .. controls (5,1) and (4,1) .. (4,2);
}
\ &= \ 
\tikzdiagh[scale=.5]{
%
%
  \begin{scope}[shift={(-1,4)}]
	\draw (1,0) .. controls (1,.-.25) and (2,-.25) .. (2,0);
	\draw[dashed] (1,0) .. controls (1,.25) and (2,.25) .. (2,0);
	\draw (1,0) -- (1,2);
	\draw (2,0) -- (2,2);
	\draw (1,2) .. controls (1,1.75) and (2,1.75) .. (2,2);
	\draw (1,2) .. controls (1,2.25) and (2,2.25) .. (2,2);
  \end{scope}
%
  \begin{scope}[shift={(2,4)}]
	\draw (0,0) .. controls (0,1) and (1,1) .. (1,2);
	\draw (1,0) .. controls (1,1) and (2,1) .. (2,0);
	\draw (3,0) .. controls (3,1) and (2,1) .. (2,2);
	\draw (0,0) .. controls (0,-.25) and (1,-.25) .. (1,0);
	\draw[dashed] (0,0) .. controls (0,.25) and (1,.25) .. (1,0);
	\draw (2,0) .. controls (2,-.25) and (3,-.25) .. (3,0);
	\draw[dashed] (2,0) .. controls (2,.25) and (3,.25) .. (3,0);
	\draw (1,2) .. controls (1,1.75) and (2,1.75) .. (2,2);
	\draw (1,2) .. controls (1,2.25) and (2,2.25) .. (2,2);
	\draw [->] (1.15,.15) -- (1.85,.15);
  \end{scope}
%
  \begin{scope}[shift={(0,2)}]
		\draw (0,0) .. controls (0,-.25) and (1,-.25) .. (1,0);
		\draw[dashed] (0,0) .. controls (0,.25) and (1,.25) .. (1,0);
		\draw (2,0) .. controls (2,-.25) and (3,-.25) .. (3,0);
		\draw[dashed] (2,0) .. controls (2,.25) and (3,.25) .. (3,0);
		\draw (0,0) .. controls (0,1) and (2,1) .. (2,2);
		\draw (1,0) .. controls (1,1) and (3,1) .. (3,2);
		\draw (2,0) .. controls (2,1) and (0,1) .. (0,2);
		\draw (3,0) .. controls (3,1) and (1,1) .. (1,2);
		%
		%
  \end{scope}
%
%
  \begin{scope}[shift={(3,2)}]
	\draw (1,0) .. controls (1,.-.25) and (2,-.25) .. (2,0);
	\draw[dashed] (1,0) .. controls (1,.25) and (2,.25) .. (2,0);
	\draw (1,0) -- (1,2);
	\draw (2,0) -- (2,2);
	%
  \end{scope}
%
%
%
  \begin{scope}[shift={(-1,0)}]
	\draw (1,0) .. controls (1,.-.25) and (2,-.25) .. (2,0);
	\draw[dashed] (1,0) .. controls (1,.25) and (2,.25) .. (2,0);
	\draw (1,0) -- (1,2);
	\draw (2,0) -- (2,2);
	%
  \end{scope}
%
%
  \begin{scope}[shift={(2,0)}]
		\draw (0,0) .. controls (0,-.25) and (1,-.25) .. (1,0);
		\draw[dashed] (0,0) .. controls (0,.25) and (1,.25) .. (1,0);
		\draw (2,0) .. controls (2,-.25) and (3,-.25) .. (3,0);
		\draw[dashed] (2,0) .. controls (2,.25) and (3,.25) .. (3,0);
		\draw (0,0) .. controls (0,1) and (2,1) .. (2,2);
		\draw (1,0) .. controls (1,1) and (3,1) .. (3,2);
		\draw (2,0) .. controls (2,1) and (0,1) .. (0,2);
		\draw (3,0) .. controls (3,1) and (1,1) .. (1,2);
		%
		%
  \end{scope}
}
&
\tikzdiagh[xscale=.5,yscale=-.5]{
%
%
  \begin{scope}[shift={(-1,4)}]
	\draw[dashed] (1,0) .. controls (1,.-.25) and (2,-.25) .. (2,0);
	\draw (1,0) .. controls (1,.25) and (2,.25) .. (2,0);
	\draw (1,0) -- (1,2);
	\draw (2,0) -- (2,2);
	\draw[dashed] (1,2) .. controls (1,1.75) and (2,1.75) .. (2,2);
	\draw (1,2) .. controls (1,2.25) and (2,2.25) .. (2,2);
  \end{scope}
%
  \begin{scope}[shift={(2,4)}]
	\draw (0,0) .. controls (0,1) and (1,1) .. (1,2);
	\draw (1,0) .. controls (1,1) and (2,1) .. (2,0);
	\draw (3,0) .. controls (3,1) and (2,1) .. (2,2);
	\draw[dashed](0,0) .. controls (0,-.25) and (1,-.25) .. (1,0);
	\draw (0,0) .. controls (0,.25) and (1,.25) .. (1,0);
	\draw[dashed]  (2,0) .. controls (2,-.25) and (3,-.25) .. (3,0);
	\draw(2,0) .. controls (2,.25) and (3,.25) .. (3,0);
	\draw[dashed] (1,2) .. controls (1,1.75) and (2,1.75) .. (2,2);
	\draw (1,2) .. controls (1,2.25) and (2,2.25) .. (2,2);
	\draw [->] (1.25,.45) -- (1.75,-.15);
  \end{scope}
%
  \begin{scope}[shift={(0,2)}]
		\draw[dashed] (0,0) .. controls (0,-.25) and (1,-.25) .. (1,0);
		\draw (0,0) .. controls (0,.25) and (1,.25) .. (1,0);
		\draw[dashed] (2,0) .. controls (2,-.25) and (3,-.25) .. (3,0);
		\draw (2,0) .. controls (2,.25) and (3,.25) .. (3,0);
		\draw (0,0) .. controls (0,1) and (2,1) .. (2,2);
		\draw (1,0) .. controls (1,1) and (3,1) .. (3,2);
		\draw (2,0) .. controls (2,1) and (0,1) .. (0,2);
		\draw (3,0) .. controls (3,1) and (1,1) .. (1,2);
		%
		%
  \end{scope}
%
%
  \begin{scope}[shift={(3,2)}]
	\draw[dashed] (1,0) .. controls (1,.-.25) and (2,-.25) .. (2,0);
	\draw (1,0) .. controls (1,.25) and (2,.25) .. (2,0);
	\draw (1,0) -- (1,2);
	\draw (2,0) -- (2,2);
	%
  \end{scope}
%
%
%
  \begin{scope}[shift={(-1,0)}]
	\draw (1,0) .. controls (1,.-.25) and (2,-.25) .. (2,0);
	\draw (1,0) .. controls (1,.25) and (2,.25) .. (2,0);
	\draw (1,0) -- (1,2);
	\draw (2,0) -- (2,2);
	%
  \end{scope}
%
%
  \begin{scope}[shift={(2,0)}]
		\draw (0,0) .. controls (0,-.25) and (1,-.25) .. (1,0);
		\draw (0,0) .. controls (0,.25) and (1,.25) .. (1,0);
		\draw (2,0) .. controls (2,-.25) and (3,-.25) .. (3,0);
		\draw (2,0) .. controls (2,.25) and (3,.25) .. (3,0);
		\draw (0,0) .. controls (0,1) and (2,1) .. (2,2);
		\draw (1,0) .. controls (1,1) and (3,1) .. (3,2);
		\draw (2,0) .. controls (2,1) and (0,1) .. (0,2);
		\draw (3,0) .. controls (3,1) and (1,1) .. (1,2);
		%
		%
  \end{scope}
}
\ &= \ 
\tikzdiagh[xscale=.5,yscale=-.5]{
  \begin{scope}[shift={(1,2)}]
		%
		\draw (0,0) .. controls (0,1) and (2,1) .. (2,2);
		\draw (1,0) .. controls (1,1) and (3,1) .. (3,2);
		\draw (2,0) .. controls (2,1) and (0,1) .. (0,2);
		\draw (3,0) .. controls (3,1) and (1,1) .. (1,2);
		\draw[dashed] (0,2) .. controls (0,1.75) and (1,1.75) .. (1,2);
		\draw (0,2) .. controls (0,2.25) and (1,2.25) .. (1,2);
		\draw[dashed] (2,2) .. controls (2,1.75) and (3,1.75) .. (3,2);
		\draw (2,2) .. controls (2,2.25) and (3,2.25) .. (3,2);
  \end{scope}
	\draw (0,0) .. controls (0,1) and (1,1) .. (1,2);
	\draw (1,0) .. controls (1,1) and (2,1) .. (2,0);
	\draw (3,0) .. controls (3,1) and (2,1) .. (2,2);
	\draw (0,0) .. controls (0,-.25) and (1,-.25) .. (1,0);
	\draw (0,0) .. controls (0,.25) and (1,.25) .. (1,0);
	\draw (2,0) .. controls (2,-.25) and (3,-.25) .. (3,0);
	\draw (2,0) .. controls (2,.25) and (3,.25) .. (3,0);
	\draw[dashed] (1,2) .. controls (1,1.75) and (2,1.75) .. (2,2);
	\draw (1,2) .. controls (1,2.25) and (2,2.25) .. (2,2);
	\draw [->] (1.25,.45) -- (1.75,-.15);
	\draw (4,0) .. controls (4,-.25) and (5,-.25) .. (5,0);
	\draw (4,0) .. controls (4,.25) and (5,.25) .. (5,0);
	\draw[dashed] (3,2) .. controls (3,1.75) and (4,1.75) .. (4,2);
	\draw (3,2) .. controls (3,2.25) and (4,2.25) .. (4,2);
	\draw (4,0) .. controls (4,1) and (3,1) .. (3,2);
	\draw (5,0) .. controls (5,1) and (4,1) .. (4,2);
}
\end{align*}
and the vertical mirror images of the above relations. 

 \subsubsection{$\bZ$-graded monoidal category of graded abelian groups} \label{sec:gradedmon}
 
 Recall that a \emph{($\bZ$-)graded group} is a group with a decomposition $G = \bigoplus_{z \in \bZ} G_z$, such that $G_{z_2} G_{z_1} \subset G_{z_2 + z_1}$ for all $z_1,z_2 \in \bZ$. 
 A homomorphism $f : G \rightarrow G'$ between graded groups is \emph{homogeneous} of degree $|f| \in \bZ$ if $f(G_z) \subset G'_{z+|f|}$ for all $z \in \bZ$.  
 
 Let $\bZ\gmod$ be the ($\bZ$-)graded category of ($\bZ$-)graded abelian groups, that is the category where objects are $\bZ$-graded abelian groups, and morphisms are finite sums of homogenenous group homomorphisms. 
 Consider the \emph{ $\bZ$-graded tensor product} defined by
 \begin{align*}
 	M \otimes N &:= \bigoplus_{z \in \bZ} (M \otimes N)_z, 
 	&
 	 (M \otimes N)_z &:= \bigoplus_{z_1 + z_2 = z} M_{z_1} \otimes_\bZ N_{z_2}.
 \end{align*}
 The unit object is given by $\bZ$ concentrated in degree zero. 
 Given two homogeneous homomorphisms $f : M \rightarrow M'$ and $g : N \rightarrow N'$, the tensor product morphism is given by
 \[
 	(f \otimes g) : (M \otimes N) \rightarrow (M' \otimes N'), \quad m \otimes n \mapsto (-1)^{|g||m|} f(m) \otimes g(n),
 \]
 for homogeneous elements $m \in M$ and $n \in N$.
 Note that $|f \otimes g| = |f| + |g|$.   
 Also, we have
 \begin{equation}\label{eq:gradcomp}
  (f \otimes g) \circ (f' \otimes g') = (-1)^{|g||f'|} (f \circ f') \otimes (g \circ g').
 \end{equation}
 Note that the sign appearing here means that $- \otimes -$ is not, strictly speaking, a bifunctor, and thus $(\bZ\gmod, \otimes)$ is not a monoidal category. It is however a `$\bZ$-graded monoidal category' as defined below (in the same sense as monoidal supercategory in \cite{supermonoidal}, see also \cite[\S4]{naisseputyra} for similar notions generalized). 

\begin{defn}
Let $\cC$ be a $\bZ\gmod$-enriched category (i.e. the hom-spaces are graded abelian groups and the composition preserves the grading and is bilinear). 
Let $- \otimes_\cC - : \cC \times \cC \rightarrow \cC$ be a map of categories such that $\id_X \otimes_\cC \id_Y = \id_{X \otimes_\cC Y}$ and respecting \cref{eq:gradcomp}. If there are natural isomorphisms respecting the same coherence relations as for the usual notion of a monoidal category, we say that $(\cC,\otimes_\cC)$ is a \emph{$\bZ$-graded monoidal category}. 
\end{defn}
 
 \begin{rem}
 The notion of $\bZ$-graded monoidal category presented here differs from the one introduced in \cite{gradedmonoidal}, and should be think of as a `monoidal structure on a category enriched over $\bZ$-graded vector spaces'. We keep the name `$\bZ$-graded monoidal category' to agree with the more general notion introduced in \cite[\S 4]{naisseputyra}.
 \end{rem}
 
 The category $\bZ\gmod$ also comes with a symmetry $\tau^\bZ_{M,N} : M \otimes N \xrightarrow{\simeq} N \otimes M$ given by
 \[
 \tau^\bZ_{M,N} (m \otimes n) := (-1)^{|m||n|} (n \otimes m).
 \]
 However, $\tau^\bZ_{M,N}$ is not natural (it is natural only up to sign depending on the  grading). 
We call this a `graded symmetry' (with the same flavor as in \cite[\S4.7]{naisseputyra}).
 
 \begin{defn}
 Let $(\cC, \otimes_\cC)$ be a graded monoidal category with a collection of isomorphisms $\tau^\cC_{M,N} : M \otimes_\cC N \xrightarrow{\simeq} N \otimes_\cC M$ for each pair $M,N \in \cC$. Suppose $\tau^\cC_{M,N}$ is \emph{graded natural} in $M,N$, meaning that the diagram
  \[
 \begin{tikzcd}[column sep=7ex, row sep=5ex]
 M \otimes_\cC N  \ar[description,phantom]{dr}{(-1)^{|f||g|}} \ar{r}{f \otimes g}   \ar[swap]{d}{\tau_{M,N}} & M' \otimes_\cC N' \ar{d}{\tau_{M',N'}}
 \\
 N \otimes_\cC M \ar[swap]{r}{g \otimes f} & N' \otimes_\cC M'
 \end{tikzcd}
 \]
commutes up to $(-1)^{|f||g|}$ for all homogeneous homomorphisms $f : M \rightarrow M'$ and $g : N \rightarrow N'$. 
If it respects the usual coherence relations of a symmetric monoidal category, then we say that $(\cC,\otimes_\cC, \tau^\cC_{-,-})$ is a \emph{$\bZ$-graded symmetric monoidal category}. 
 \end{defn}
 
Observe that $(\bZ\gmod, \otimes_\bZ, \tau^\bZ_{-,-})$ is a $\bZ$-graded symmetric monoidal category. 
 
 \smallskip
 
 Finally, a \emph{graded (symmetric) monoidal functor} is simply a $\bZ\gmod$-linear functor (i.e. it is $\bZ\,$-linear and preserves the grading) coming with the usual natural isomorphisms and respecting the same coherence relations as a (symmetric) monoidal functor. Note that there are no additional signs involved in this definition because we ask the functor to preserve the grading.
 
 \subsubsection{Definition of $\oddTQFT$}\label{ssec:oddTQFT}

We now define the odd TQFT 
\[
\oddTQFT : \chcobcat \rightarrow \bZ\gmod, 
\]
which, as we will see, is an `almost monoidal' functor. 
Let $A := v_+ \bZ \oplus v_- \bZ$ be a 2-dimensional graded abelian group with $|v_+| := 0$ and $|v_-| := 1$.  
We put
\[
\oddTQFT(S^1) := A,
\]
and in general a collection of $m$ circles is sent to the $\bZ$-graded tensor product of $m$ copies of $A$. 

\smallskip

We define $\oddTQFT$ on each elementary cobordism: 
\begin{align*}
\oddTQFT\left(\tikzdiagh[scale=.5]{
	\draw (0,0) .. controls (0,1) and (1,1) .. (1,2);
	\draw (1,0) .. controls (1,1) and (2,1) .. (2,0);
	\draw (3,0) .. controls (3,1) and (2,1) .. (2,2);
	\draw (0,0) .. controls (0,-.25) and (1,-.25) .. (1,0);
	\draw[dashed] (0,0) .. controls (0,.25) and (1,.25) .. (1,0);
	\draw (2,0) .. controls (2,-.25) and (3,-.25) .. (3,0);
	\draw[dashed] (2,0) .. controls (2,.25) and (3,.25) .. (3,0);
	\draw (1,2) .. controls (1,1.75) and (2,1.75) .. (2,2);
	\draw (1,2) .. controls (1,2.25) and (2,2.25) .. (2,2);
	\draw [->] (1.15,.15) -- (1.85,.15);
}\right) : A \otimes A \rightarrow A &:= 
\begin{cases}
v_+ \otimes v_+ \mapsto v_+, & v_+ \otimes v_- \mapsto v_-, \\
v_- \otimes v_- \mapsto 0, & v_- \otimes v_+ \mapsto v_-,
\end{cases}
\\
\oddTQFT\left(\tikzdiagh[xscale=.5,yscale=-.5]{
	\draw (0,0) .. controls (0,1) and (1,1) .. (1,2);
	\draw (1,0) .. controls (1,1) and (2,1) .. (2,0);
	\draw (3,0) .. controls (3,1) and (2,1) .. (2,2);
	\draw (0,0) .. controls (0,-.25) and (1,-.25) .. (1,0);
	\draw (0,0) .. controls (0,.25) and (1,.25) .. (1,0);
	\draw (2,0) .. controls (2,-.25) and (3,-.25) .. (3,0);
	\draw (2,0) .. controls (2,.25) and (3,.25) .. (3,0);
	\draw[dashed] (1,2) .. controls (1,1.75) and (2,1.75) .. (2,2);
	\draw (1,2) .. controls (1,2.25) and (2,2.25) .. (2,2);
	\draw [->] (1.25,.45) -- (1.75,-.15);
}\right) : A  \rightarrow A \otimes A &:= 
\begin{cases}
v_+ \mapsto v_- \otimes v_+ - v_+ \otimes v_-,  &\\
v_- \mapsto v_- \otimes v_-, &
\end{cases}
\\
\oddTQFT\left(\right) : \bZ \rightarrow A  &:= 
\begin{cases}
1 \mapsto v_+, & 
\end{cases}
\\
\oddTQFT\left(\right) : A  \rightarrow \bZ &:= 
\begin{cases}
v_+ \mapsto 0,  &\\
v_- \mapsto -1, &
\end{cases}
\\
\oddTQFT\left(\right) : A  \rightarrow \bZ &:= 
\begin{cases}
v_+ \mapsto 0,  &\\
v_- \mapsto 1, &
\end{cases}
\end{align*}
and the twist is sent to $\tau_{A,A}$. 
We extend by $\oddTQFT(f \otimes g) := \oddTQFT(f) \otimes_\bZ \oddTQFT(g)$.

Note that we have
\begin{align*}
\oddTQFT\left( \tikzdiagh[scale=.5]{
	\draw (0,0) .. controls (0,1) and (1,1) .. (1,2);
	\draw (1,0) .. controls (1,1) and (2,1) .. (2,0);
	\draw (3,0) .. controls (3,1) and (2,1) .. (2,2);
	\draw (0,0) .. controls (0,-.25) and (1,-.25) .. (1,0);
	\draw[dashed] (0,0) .. controls (0,.25) and (1,.25) .. (1,0);
	\draw (2,0) .. controls (2,-.25) and (3,-.25) .. (3,0);
	\draw[dashed] (2,0) .. controls (2,.25) and (3,.25) .. (3,0);
	\draw (1,2) .. controls (1,1.75) and (2,1.75) .. (2,2);
	\draw (1,2) .. controls (1,2.25) and (2,2.25) .. (2,2);
	\draw [->] (1.15,.15) -- (1.85,.15);
} \right)   &=  \oddTQFT\left(  \tikzdiagh[scale=.5]{
	\draw (0,0) .. controls (0,1) and (1,1) .. (1,2);
	\draw (1,0) .. controls (1,1) and (2,1) .. (2,0);
	\draw (3,0) .. controls (3,1) and (2,1) .. (2,2);
	\draw (0,0) .. controls (0,-.25) and (1,-.25) .. (1,0);
	\draw[dashed] (0,0) .. controls (0,.25) and (1,.25) .. (1,0);
	\draw (2,0) .. controls (2,-.25) and (3,-.25) .. (3,0);
	\draw[dashed] (2,0) .. controls (2,.25) and (3,.25) .. (3,0);
	\draw (1,2) .. controls (1,1.75) and (2,1.75) .. (2,2);
	\draw (1,2) .. controls (1,2.25) and (2,2.25) .. (2,2);
	\draw [<-] (1.15,.15) -- (1.85,.15);
}  \right),
&
\oddTQFT\left(  \tikzdiagh[xscale=.5,yscale=-.5]{
	\draw (0,0) .. controls (0,1) and (1,1) .. (1,2);
	\draw (1,0) .. controls (1,1) and (2,1) .. (2,0);
	\draw (3,0) .. controls (3,1) and (2,1) .. (2,2);
	\draw (0,0) .. controls (0,-.25) and (1,-.25) .. (1,0);
	\draw (0,0) .. controls (0,.25) and (1,.25) .. (1,0);
	\draw (2,0) .. controls (2,-.25) and (3,-.25) .. (3,0);
	\draw (2,0) .. controls (2,.25) and (3,.25) .. (3,0);
	\draw[dashed] (1,2) .. controls (1,1.75) and (2,1.75) .. (2,2);
	\draw (1,2) .. controls (1,2.25) and (2,2.25) .. (2,2);
	\draw [->] (1.25,.45) -- (1.75,-.15);
} \right)    &= -  \oddTQFT\left(  \tikzdiagh[xscale=.5,yscale=-.5]{
	\draw (0,0) .. controls (0,1) and (1,1) .. (1,2);
	\draw (1,0) .. controls (1,1) and (2,1) .. (2,0);
	\draw (3,0) .. controls (3,1) and (2,1) .. (2,2);
	\draw (0,0) .. controls (0,-.25) and (1,-.25) .. (1,0);
	\draw (0,0) .. controls (0,.25) and (1,.25) .. (1,0);
	\draw (2,0) .. controls (2,-.25) and (3,-.25) .. (3,0);
	\draw (2,0) .. controls (2,.25) and (3,.25) .. (3,0);
	\draw[dashed] (1,2) .. controls (1,1.75) and (2,1.75) .. (2,2);
	\draw (1,2) .. controls (1,2.25) and (2,2.25) .. (2,2);
	\draw [<-] (1.25,.45) -- (1.75,-.15);
}  \right).
\end{align*}

\subsubsection{The $\bZ\gmod$-enriched category $\linchcobcat$}\label{sec:linchcobcat}

As explained in \cite{putyra14}, the TQFT $\oddTQFT$ factorizes through a $\bZ\gmod$-enriched version of $\chcobcat$. 
For this, we first need to define the \emph{degree of a cobordism} $W$ as
\[
\deg(W) := \#\text{splits} - \#\text{deaths}.
\]

\begin{defn}
Let $\linchcobcat$ be the $\bZ\gmod$-enriched category with the same objects as $\chcobcat$, and with hom-spaces given by formal $\bZ$-linear combinations of chronological cobordisms subject to the relations: 
\begin{align}\label{eq:reverseorientation}
 \tikzdiagh[scale=.5]{
	\draw (0,0) .. controls (0,1) and (1,1) .. (1,2);
	\draw (1,0) .. controls (1,1) and (2,1) .. (2,0);
	\draw (3,0) .. controls (3,1) and (2,1) .. (2,2);
	\draw (0,0) .. controls (0,-.25) and (1,-.25) .. (1,0);
	\draw[dashed] (0,0) .. controls (0,.25) and (1,.25) .. (1,0);
	\draw (2,0) .. controls (2,-.25) and (3,-.25) .. (3,0);
	\draw[dashed] (2,0) .. controls (2,.25) and (3,.25) .. (3,0);
	\draw (1,2) .. controls (1,1.75) and (2,1.75) .. (2,2);
	\draw (1,2) .. controls (1,2.25) and (2,2.25) .. (2,2);
	\draw [->] (1.15,.15) -- (1.85,.15);
}\   &= \ \tikzdiagh[scale=.5]{
	\draw (0,0) .. controls (0,1) and (1,1) .. (1,2);
	\draw (1,0) .. controls (1,1) and (2,1) .. (2,0);
	\draw (3,0) .. controls (3,1) and (2,1) .. (2,2);
	\draw (0,0) .. controls (0,-.25) and (1,-.25) .. (1,0);
	\draw[dashed] (0,0) .. controls (0,.25) and (1,.25) .. (1,0);
	\draw (2,0) .. controls (2,-.25) and (3,-.25) .. (3,0);
	\draw[dashed] (2,0) .. controls (2,.25) and (3,.25) .. (3,0);
	\draw (1,2) .. controls (1,1.75) and (2,1.75) .. (2,2);
	\draw (1,2) .. controls (1,2.25) and (2,2.25) .. (2,2);
	\draw [<-] (1.15,.15) -- (1.85,.15);
} 
 &
 \tikzdiagh[xscale=.5,yscale=-.5]{
	\draw (0,0) .. controls (0,1) and (1,1) .. (1,2);
	\draw (1,0) .. controls (1,1) and (2,1) .. (2,0);
	\draw (3,0) .. controls (3,1) and (2,1) .. (2,2);
	\draw (0,0) .. controls (0,-.25) and (1,-.25) .. (1,0);
	\draw (0,0) .. controls (0,.25) and (1,.25) .. (1,0);
	\draw (2,0) .. controls (2,-.25) and (3,-.25) .. (3,0);
	\draw (2,0) .. controls (2,.25) and (3,.25) .. (3,0);
	\draw[dashed] (1,2) .. controls (1,1.75) and (2,1.75) .. (2,2);
	\draw (1,2) .. controls (1,2.25) and (2,2.25) .. (2,2);
	\draw [->] (1.25,.45) -- (1.75,-.15);
}\   &= - \ \tikzdiagh[xscale=.5,yscale=-.5]{
	\draw (0,0) .. controls (0,1) and (1,1) .. (1,2);
	\draw (1,0) .. controls (1,1) and (2,1) .. (2,0);
	\draw (3,0) .. controls (3,1) and (2,1) .. (2,2);
	\draw (0,0) .. controls (0,-.25) and (1,-.25) .. (1,0);
	\draw (0,0) .. controls (0,.25) and (1,.25) .. (1,0);
	\draw (2,0) .. controls (2,-.25) and (3,-.25) .. (3,0);
	\draw (2,0) .. controls (2,.25) and (3,.25) .. (3,0);
	\draw[dashed] (1,2) .. controls (1,1.75) and (2,1.75) .. (2,2);
	\draw (1,2) .. controls (1,2.25) and (2,2.25) .. (2,2);
	\draw [<-] (1.25,.45) -- (1.75,-.15);
} 
 &
 \   &= -  \  
 \\
 \label{eq:chcobrelbirthmerge}
 \tikzdiagh[scale=.5]{
	\draw (1,0) .. controls (1,.-.25) and (2,-.25) .. (2,0);
	\draw[dashed] (1,0) .. controls (1,.25) and (2,.25) .. (2,0);
	\draw (1,0) -- (1,4);
	\draw (2,0) -- (2,4);
	\draw (1,4) .. controls (1,3.75) and (2,3.75) .. (2,4);
	\draw (1,4) .. controls (1,4.25) and (2,4.25) .. (2,4);
}\   &= \  \tikzdiagh[scale=.5]{
	\draw (2,0) .. controls (2,.-.25) and (3,-.25) .. (3,0);
	\draw[dashed] (2,0) .. controls (2,.25) and (3,.25) .. (3,0);
	\draw (2,0) -- (2,2);
	\draw (3,0) -- (3,2);
	\draw (2,2) .. controls (2,1.75) and (3,1.75) .. (3,2);
	\draw[dashed] (2,2) .. controls (2,2.25) and (3,2.25) .. (3,2);
	\draw (0,2) .. controls (0,1) and (1,1) .. (1,2);
	\draw (0,2) .. controls (0,1.75) and (1,1.75) .. (1,2);
	\draw[dashed] (0,2) .. controls (0,2.25) and (1,2.25) .. (1,2);
	\draw (0,2) .. controls (0,3) and (1,3) .. (1,4);
	\draw (1,2) .. controls (1,3) and (2,3) .. (2,2);
	\draw (3,2) .. controls (3,3) and (2,3) .. (2,4);
	\draw (1,4) .. controls (1,3.75) and (2,3.75) .. (2,4);
	\draw (1,4) .. controls (1,4.25) and (2,4.25) .. (2,4);
	\draw [<-] (1.15,2.15) -- (1.85,2.15);
} 
&
 \   &=  \  \tikzdiagh[xscale=.5,yscale=-.5]{
	\draw (2,0) .. controls (2,.-.25) and (3,-.25) .. (3,0);
	\draw (2,0) .. controls (2,.25) and (3,.25) .. (3,0);
	\draw (2,0) -- (2,2);
	\draw (3,0) -- (3,2);
	\draw[dashed]  (2,2) .. controls (2,1.75) and (3,1.75) .. (3,2);
	\draw(2,2) .. controls (2,2.25) and (3,2.25) .. (3,2);
	\draw (0,2) .. controls (0,1) and (1,1) .. (1,2);
	\draw[dashed] (0,2) .. controls (0,1.75) and (1,1.75) .. (1,2);
	\draw (0,2) .. controls (0,2.25) and (1,2.25) .. (1,2);
	\draw[<-] (.5,.5) [partial ellipse=90:360:3ex and 1ex];
	\draw (0,2) .. controls (0,3) and (1,3) .. (1,4);
	\draw (1,2) .. controls (1,3) and (2,3) .. (2,2);
	\draw (3,2) .. controls (3,3) and (2,3) .. (2,4);
	\draw[dashed]  (1,4) .. controls (1,3.75) and (2,3.75) .. (2,4);
	\draw (1,4) .. controls (1,4.25) and (2,4.25) .. (2,4);
	\draw [<-] (1.25,2.45) -- (1.75,1.85);
} 
 &
 \   &=   \  \tikzdiagh[xscale=-.5,yscale=-.5]{
	\draw (2,0) .. controls (2,.-.25) and (3,-.25) .. (3,0);
	\draw (2,0) .. controls (2,.25) and (3,.25) .. (3,0);
	\draw (2,0) -- (2,2);
	\draw (3,0) -- (3,2);
	\draw[dashed]  (2,2) .. controls (2,1.75) and (3,1.75) .. (3,2);
	\draw(2,2) .. controls (2,2.25) and (3,2.25) .. (3,2);
	\draw (0,2) .. controls (0,1) and (1,1) .. (1,2);
	\draw[dashed] (0,2) .. controls (0,1.75) and (1,1.75) .. (1,2);
	\draw (0,2) .. controls (0,2.25) and (1,2.25) .. (1,2);
	\draw[<-] (.5,.5) [partial ellipse=90:360:3ex and 1ex];
	\draw (0,2) .. controls (0,3) and (1,3) .. (1,4);
	\draw (1,2) .. controls (1,3) and (2,3) .. (2,2);
	\draw (3,2) .. controls (3,3) and (2,3) .. (2,4);
	\draw[dashed]  (1,4) .. controls (1,3.75) and (2,3.75) .. (2,4);
	\draw (1,4) .. controls (1,4.25) and (2,4.25) .. (2,4);
	\draw [<-] (1.75,2.45) -- (1.25,1.85);
} 
\end{align}
and
\begin{align}
\label{eq:cobcommute1}
 \tikzdiagh[scale=.5]{
	\draw (0,0) .. controls (0,.-.25) and (1,-.25) .. (1,0);
	\draw[dashed] (0,0) .. controls (0,.25) and (1,.25) .. (1,0);
	\draw (0,0) -- (0,4);
	\draw (1,0) -- (1,4);
	\draw (0,4) .. controls (0,3.75) and (1,3.75) .. (1,4);
	\draw (0,4) .. controls (0,4.25) and (1,4.25) .. (1,4);
	\node at(1.5,.25) {\tiny $\dots$};
	\node at(1.5,3.75) {\tiny $\dots$};
	\draw (2,0) .. controls (2,.-.25) and (3,-.25) .. (3,0);
	\draw[dashed] (2,0) .. controls (2,.25) and (3,.25) .. (3,0);
	\draw (2,0) -- (2,4);
	\draw (3,0) -- (3,4);
	\draw (2,4) .. controls (2,3.75) and (3,3.75) .. (3,4);
	\draw (2,4) .. controls (2,4.25) and (3,4.25) .. (3,4);
	\node at(3.5,.25) {\tiny $\dots$};
	\node at(3.5,3.75) {\tiny $\dots$};
	\draw (4,0) .. controls (4,.-.25) and (5,-.25) .. (5,0);
	\draw[dashed] (4,0) .. controls (4,.25) and (5,.25) .. (5,0);
	\draw (4,0) -- (4,4);
	\draw (5,0) -- (5,4);
	\draw (4,4) .. controls (4,3.75) and (5,3.75) .. (5,4);
	\draw (4,4) .. controls (4,4.25) and (5,4.25) .. (5,4);
	\filldraw [fill=white, draw=black,rounded corners] (-.5,.5) rectangle (2.5,1.5) node[midway] { $W'$};
	\filldraw [fill=white, draw=black,rounded corners] (2.5,2.5) rectangle (5.5,3.5) node[midway] { $W$};
}\   &= (-1)^{\deg(W)\deg(W')}  \ \tikzdiagh[scale=.5]{
	\draw (0,0) .. controls (0,.-.25) and (1,-.25) .. (1,0);
	\draw[dashed] (0,0) .. controls (0,.25) and (1,.25) .. (1,0);
	\draw (0,0) -- (0,4);
	\draw (1,0) -- (1,4);
	\draw (0,4) .. controls (0,3.75) and (1,3.75) .. (1,4);
	\draw (0,4) .. controls (0,4.25) and (1,4.25) .. (1,4);
	\node at(1.5,.25) {\tiny $\dots$};
	\node at(1.5,3.75) {\tiny $\dots$};
	\draw (2,0) .. controls (2,.-.25) and (3,-.25) .. (3,0);
	\draw[dashed] (2,0) .. controls (2,.25) and (3,.25) .. (3,0);
	\draw (2,0) -- (2,4);
	\draw (3,0) -- (3,4);
	\draw (2,4) .. controls (2,3.75) and (3,3.75) .. (3,4);
	\draw (2,4) .. controls (2,4.25) and (3,4.25) .. (3,4);
	\node at(3.5,.25) {\tiny $\dots$};
	\node at(3.5,3.75) {\tiny $\dots$};
	\draw (4,0) .. controls (4,.-.25) and (5,-.25) .. (5,0);
	\draw[dashed] (4,0) .. controls (4,.25) and (5,.25) .. (5,0);
	\draw (4,0) -- (4,4);
	\draw (5,0) -- (5,4);
	\draw (4,4) .. controls (4,3.75) and (5,3.75) .. (5,4);
	\draw (4,4) .. controls (4,4.25) and (5,4.25) .. (5,4);
	\filldraw [fill=white, draw=black,rounded corners] (-.5,2.5) rectangle (2.5,3.5) node[midway] { $W'$};
	\filldraw [fill=white, draw=black,rounded corners] (2.5,.5) rectangle (5.5,1.5) node[midway] { $W$};
} 
 \\
\label{eq:cobcommute2}
 \tikzdiagh[scale=.5]{
	\draw (0,0) .. controls (0,.-.25) and (1,-.25) .. (1,0);
	\draw[dashed] (0,0) .. controls (0,.25) and (1,.25) .. (1,0);
	\draw (0,0) -- (0,4);
	\draw (1,0) -- (1,4);
	\draw (0,4) .. controls (0,3.75) and (1,3.75) .. (1,4);
	\draw (0,4) .. controls (0,4.25) and (1,4.25) .. (1,4);
	\node at(1.5,.25) {\tiny $\dots$};
	\node at(1.5,3.75) {\tiny $\dots$};
	\draw (2,0) .. controls (2,.-.25) and (3,-.25) .. (3,0);
	\draw[dashed] (2,0) .. controls (2,.25) and (3,.25) .. (3,0);
	\draw (2,0) -- (2,4);
	\draw (3,0) -- (3,4);
	\draw (2,4) .. controls (2,3.75) and (3,3.75) .. (3,4);
	\draw (2,4) .. controls (2,4.25) and (3,4.25) .. (3,4);
	\filldraw [fill=white, draw=black,rounded corners] (-.5,.5) rectangle (3.5,1.5) node[midway] { $W'$};
} \  \tikzdiagh[scale=.5]{
	\draw (0,0) .. controls (0,.-.25) and (1,-.25) .. (1,0);
	\draw[dashed] (0,0) .. controls (0,.25) and (1,.25) .. (1,0);
	\draw (0,0) -- (0,4);
	\draw (1,0) -- (1,4);
	\draw (0,4) .. controls (0,3.75) and (1,3.75) .. (1,4);
	\draw (0,4) .. controls (0,4.25) and (1,4.25) .. (1,4);
	\node at(1.5,.25) {\tiny $\dots$};
	\node at(1.5,3.75) {\tiny $\dots$};
	\draw (2,0) .. controls (2,.-.25) and (3,-.25) .. (3,0);
	\draw[dashed] (2,0) .. controls (2,.25) and (3,.25) .. (3,0);
	\draw (2,0) -- (2,4);
	\draw (3,0) -- (3,4);
	\draw (2,4) .. controls (2,3.75) and (3,3.75) .. (3,4);
	\draw (2,4) .. controls (2,4.25) and (3,4.25) .. (3,4);
	\filldraw [fill=white, draw=black,rounded corners] (-.5,2.5) rectangle (3.5,3.5) node[midway] { $W$};
} \   &= (-1)^{\deg(W)\deg(W')} \  \tikzdiagh[scale=.5]{
	\draw (0,0) .. controls (0,.-.25) and (1,-.25) .. (1,0);
	\draw[dashed] (0,0) .. controls (0,.25) and (1,.25) .. (1,0);
	\draw (0,0) -- (0,4);
	\draw (1,0) -- (1,4);
	\draw (0,4) .. controls (0,3.75) and (1,3.75) .. (1,4);
	\draw (0,4) .. controls (0,4.25) and (1,4.25) .. (1,4);
	\node at(1.5,.25) {\tiny $\dots$};
	\node at(1.5,3.75) {\tiny $\dots$};
	\draw (2,0) .. controls (2,.-.25) and (3,-.25) .. (3,0);
	\draw[dashed] (2,0) .. controls (2,.25) and (3,.25) .. (3,0);
	\draw (2,0) -- (2,4);
	\draw (3,0) -- (3,4);
	\draw (2,4) .. controls (2,3.75) and (3,3.75) .. (3,4);
	\draw (2,4) .. controls (2,4.25) and (3,4.25) .. (3,4);
	\filldraw [fill=white, draw=black,rounded corners] (-.5,2.5) rectangle (3.5,3.5) node[midway] { $W'$};
} \ \tikzdiagh[scale=.5]{
	\draw (0,0) .. controls (0,.-.25) and (1,-.25) .. (1,0);
	\draw[dashed] (0,0) .. controls (0,.25) and (1,.25) .. (1,0);
	\draw (0,0) -- (0,4);
	\draw (1,0) -- (1,4);
	\draw (0,4) .. controls (0,3.75) and (1,3.75) .. (1,4);
	\draw (0,4) .. controls (0,4.25) and (1,4.25) .. (1,4);
	\node at(1.5,.25) {\tiny $\dots$};
	\node at(1.5,3.75) {\tiny $\dots$};
	\draw (2,0) .. controls (2,.-.25) and (3,-.25) .. (3,0);
	\draw[dashed] (2,0) .. controls (2,.25) and (3,.25) .. (3,0);
	\draw (2,0) -- (2,4);
	\draw (3,0) -- (3,4);
	\draw (2,4) .. controls (2,3.75) and (3,3.75) .. (3,4);
	\draw (2,4) .. controls (2,4.25) and (3,4.25) .. (3,4);
	\filldraw [fill=white, draw=black,rounded corners] (-.5,.5) rectangle (3.5,1.5) node[midway] { $W$};
} 
\end{align}
for all cobordisms $W'$ and $W$.
\end{defn}

\begin{rem}\label{rem:redundantrelchcob}
Since $\linchcobcat$ is defined as a quotient of $\chcobcat$, which is topologically defined, all pictures in \cref{eq:chcobrelbirthmerge} can be rotated along the vertical axis by 180 degrees. Alternatively, this can be deduced from the exchange relations and \cref{eq:reverseorientation}.
Also note that the third relation of \cref{eq:chcobrelbirthmerge} is redundant with the second one together with \cref{eq:reverseorientation}.
\end{rem}

Note that $\linchcobcat$ is a $\bZ$-graded symmetric monoidal category, with the graded symmetry given by using the twist. It gives a graded natural isomorphism because of \cref{eq:cobcommute1} and \cref{eq:cobcommute2}.

\smallskip

Then, there is an obvious functor $\chcobcat \rightarrow \linchcobcat$, and we obtain an induced functor $\linoddTQFT : \linchcobcat \rightarrow \bZ\gmod$, with a commutative diagram:
\[
\begin{tikzcd}
\chcobcat \ar{dr} \ar{rr}{\oddTQFT} &&  \bZ\gmod \\
&\linchcobcat \ar[swap]{ur}{\linoddTQFT}&
\end{tikzcd}
\]
It is easy to see that the functor $\oddTQFT$ verifies the relations  Eq.~\eqref{eq:reverseorientation} - \eqref{eq:cobcommute2}, and thus $\linoddTQFT$ is well-defined. 
Moreover, it is a graded symmetric monoidal functor.

\subsection{Pullbacks and pushforwards}

Let us recall some standard facts about the cohomology of manifolds. 
Given a continuous map $\varphi : X \rightarrow Y$ between topological spaces, one can consider the pullback and pushforward 
\begin{align*}
\varphi^*: H^*(Y) &\rightarrow H^*(X) \text{ and}\\
\varphi_*: H_*(X) &\rightarrow H_*(Y).
\intertext{If $X$ and $Y$ are furthermore closed (i.e. compact and without boundary) oriented manifolds of dimension $|X|$ and $|Y|$ respectively, one can also construct an \emph{exceptional} pushforward (also called \emph{Umkehr map}, \emph{shriek map} or \emph{transfer map} in the literature)}
\varphi_! : H^*(X) &\rightarrow H^*(Y).
\end{align*}
For this, we fix a choice of orientations for $X$ and $Y$, that is a choice of fundamental classes $[f_X] \in H_{|X|}(X)$ and $[f_Y] \in H_{|Y|}(Y)$. 
Then, Poincar\'e duality gives an isomorphism $P_X : H_*(X)  \xrightarrow{\simeq}  H^*(X)$ by capping  with the fundamental class:
\begin{align*}
P_X &: H^*(X) \xrightarrow{\simeq} H_*(X), & \omega &\mapsto [f_X] \frown \omega,
\end{align*}
and similarly for $Y$. 
The pushforward $\varphi_!$ is defined in terms of the following commutative diagram:
\[
\begin{tikzcd}
H^*(X) 
\ar["P_X"']{d} 
\ar{d}{\vsimeq}
\ar{r}{\varphi_!} 
&
 H^*(Y) 
\\
H_*(X) \ar[swap]{r}{\varphi_*} & 
H_*(Y). 
\ar["P_Y^{-1}"']{u}
\ar{u}{\vsimeqop}
\end{tikzcd}
\]

We consider $H^*(X)$ and $H_*(X)$  in $\bZ\gmod$, where $H^i(X)$ is in degree $i$ and  $H_i(X)$ in degree $-i.$  
With this convention, the cap product $\frown : H_i(X) \times H^j(X) \rightarrow H_{j-i}(X)$  preserves the grading. 
In particular the Poincar\'e isomorphism $P_X : H^*(X) \xrightarrow{\simeq} H_*(X)$  has degree $-|X|$. 
Thus, the pushforward $\varphi_! : H^*(X) \rightarrow H^*(Y)$ has degree $|\varphi_!| = |Y| - |X|$.
\subsubsection{Pushfowards and the K\"unneth formula}

Let $X, X', Y$ and $Y'$ be closed oriented manifolds with torsion-free homology groups. Let $[f_X]$ and $[f_Y]$ be their respective fundamental classes. 
 Consider the products $X \times Y$ (resp. $X' \times Y'$) equipped with orientation induced by $X$ and $Y$ (resp. $X'$ and $Y'$), that is $[f_{X \times Y}] := [f_X] \times [f_Y]$.
 Suppose we have continuous maps $\varphi : X \rightarrow X'$ and $\psi : Y \rightarrow Y'$.

\begin{lem}\label{lem:poincarekunneth}
Under the conditions above, we have a commutative diagram
\begin{equation*}
\begin{tikzcd}
H^*(X) \otimes H^*(Y) 
\ar{r}{P_X \otimes P_Y}
\ar{d}{\vsimeq}
\ar[swap]{d}{- \smile -}
&[10ex]
H_*(X) \otimes H_*(Y)
\ar[swap]{d}{\vsimeqop}
\ar{d}{- \times -}
\\
H^*(X \times Y)
\ar[swap]{r}{P_{X \times Y}}
&
H_*(X \times Y)
\end{tikzcd}
\end{equation*}
where the vertical arrows are given by the K\"unneth formula.
\end{lem}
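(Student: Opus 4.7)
The plan is to verify commutativity on elementary tensors $\alpha \otimes \beta$ with $\alpha \in H^r(X)$, $\beta \in H^s(Y)$, and then extend by bilinearity. The first step is to unwind the vertical maps: the left vertical sends $\alpha \otimes \beta$ to the cohomological cross product $\alpha \times \beta := p_X^*(\alpha) \smile p_Y^*(\beta)$ (the notation $-\smile-$ in the diagram refers to this composite built from the projections $p_X, p_Y$ of $X \times Y$), while the right vertical sends $a \otimes b$ to the homological cross product. Both maps are isomorphisms by the K\"unneth theorem, since $X$ and $Y$ have torsion-free (co)homology.

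The second step uses compatibility of the product orientation, $[f_{X \times Y}] = [f_X] \times [f_Y]$, to rewrite the bottom-left route as
\[
P_{X \times Y}(\alpha \times \beta) \;=\; ([f_X] \times [f_Y]) \frown (\alpha \times \beta).
\]
The crucial input is then the standard graded identity
\[
(a \times b) \frown (\alpha \times \beta) \;=\; (-1)^{|\alpha|\cdot |b|}\,(a \frown \alpha) \times (b \frown \beta),
\]
valid for any pair of spaces and proved at the (co)chain level via the Eilenberg--Zilber shuffle map. Specializing $a = [f_X]$, $b = [f_Y]$ gives
\[
P_{X \times Y}(\alpha \times \beta) \;=\; (-1)^{|Y|\cdot r}\,\bigl(P_X(\alpha) \times P_Y(\beta)\bigr).
\]

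The third step compares this with the top-right route. Since $P_Y$ has degree $-|Y|$, the $\bZ$-graded tensor product convention \cref{eq:gradcomp} yields
\[
(P_X \otimes P_Y)(\alpha \otimes \beta) \;=\; (-1)^{|P_Y|\cdot|\alpha|}\, P_X(\alpha) \otimes P_Y(\beta) \;=\; (-1)^{|Y|\cdot r}\, P_X(\alpha) \otimes P_Y(\beta),
\]
and applying the homological cross product produces the very same element as in the previous display. So the two routes agree.

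The main obstacle I anticipate is sign bookkeeping: one must interpret $P_X \otimes P_Y$ in the $\bZ$-graded symmetric monoidal sense introduced in \cref{sec:gradedmon} rather than naively, and choose a matching sign convention for the cap--cross compatibility (different references tabulate this identity with slightly different signs depending on how cross and cap are normalized). Once the two conventions are aligned, the verification reduces to the short calculation above, with no further geometric input required.
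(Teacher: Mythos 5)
Your proof is correct and takes essentially the same route as the paper: both compute on elementary tensors, unfold $P_X \otimes P_Y$ using the Koszul sign from the $\bZ$-graded tensor product, and invoke the cross/cap compatibility formula (the paper cites Bredon, Chapter~XI, Theorem~5.4, labelled \cref{eq:crosscap}) to see that the two routes produce the same sign $(-1)^{|Y|\cdot|\alpha|}$.
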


\begin{proof}
For $x \otimes y \in H^*(X) \otimes H^*(Y)$, we compute
\begin{align*}
(-1)^{|x||P_Y|} P_X(x) \otimes P_Y(y) &= 
(-1)^{-|x||Y|} ([f_X] \frown x) \times ([f_Y] \frown y)
\\
&= 
[f_{X \times Y}] \frown (x \smile y)
= P_{X \times Y}(x \smile y),
\end{align*}
using the cross/cap product formula
\begin{equation}\label{eq:crosscap}
([a] \frown x) \times ([b] \frown y) = (-1)^{|x||[b]|} ([a] \times [b]) \frown (x \smile y),
\end{equation}
from \cite[Chapter XI, Theorem 5.4]{bredon}.
\end{proof}

\begin{lem}\label{prop:comppushkunneth}
Under the conditions above, we have a commutative diagram
\begin{equation*}
\begin{tikzcd}
H^*(X) \otimes H^*(Y) 
\ar{r}{(-1)^{|\varphi_!||Y'|} (\varphi_! \otimes \psi_!)}
\ar{d}{\vsimeq}
\ar[swap]{d}{- \smile -}
& [10ex]
H^*(X') \otimes H^*(Y')
\ar[swap]{d}{\vsimeqop}
\ar{d}{- \smile -}
\\
H^*(X \times Y) 
\ar[swap]{r}{(\varphi \times \psi)_!}
& 
H^*(X' \times Y'),
\end{tikzcd}
\end{equation*}
where the vertical arrows are given by the K\"unneth formula. 
\end{lem}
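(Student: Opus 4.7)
The plan is to unwind $(\varphi \times \psi)_!$ via its definition in terms of Poincar\'e duality, namely $(\varphi \times \psi)_! = P_{X' \times Y'}^{-1} \circ (\varphi \times \psi)_* \circ P_{X \times Y}$, and then pass through the K\"unneth isomorphisms at either end by means of \cref{lem:poincarekunneth}. This reduces the statement to the corresponding property of the homological cross product under pushforwards.

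First, I would apply \cref{lem:poincarekunneth} to an element of the form $x \smile y \in H^*(X \times Y)$: it converts $P_{X \times Y}(x \smile y)$ into the cross product $(-1)^{|x||Y|}\, P_X(x) \times P_Y(y) \in H_*(X \times Y)$, up to a single Koszul sign. Next, I would invoke the naturality of the homological cross product with respect to pushforwards, that is, the standard identity $(\varphi \times \psi)_*(a \times b) = \varphi_*(a) \times \psi_*(b)$ from \cite[Chapter XI]{bredon}, which holds without any additional sign. This transports the expression to a homological cross product in $H_*(X' \times Y')$. Finally, applying $P_{X' \times Y'}^{-1}$ and using \cref{lem:poincarekunneth} in reverse produces $\varphi_!(x) \smile \psi_!(y) \in H^*(X' \times Y')$ up to a further Koszul sign, since by construction $P_{X'}^{-1} \circ \varphi_* \circ P_X = \varphi_!$ and similarly for $\psi_!$.

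It then remains to verify that the accumulated sign coincides with the desired $(-1)^{|\varphi_!||Y'|}$, after expanding $(\varphi_! \otimes \psi_!)(x \otimes y) = (-1)^{|\psi_!||x|} \varphi_!(x) \otimes \psi_!(y)$ according to the Koszul convention from \cref{sec:gradedmon}. Using $|\psi_!| = |Y'| - |Y|$ together with $|\varphi_!(x)| = |\varphi_!| + |x|$, a short mod-$2$ computation shows that the two sign exponents differ by $|x|(|Y| + |Y'| + |\psi_!|) \equiv 0 \pmod 2$, so the diagram commutes.

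The main obstacle is purely sign bookkeeping: there are four distinct sources of signs to reconcile (the graded tensor product in $\bZ\gmod$, the Koszul sign in \cref{lem:poincarekunneth} applied twice in opposite directions, and the Poincar\'e-duality degree shifts built into $\varphi_!$ and $\psi_!$). No new topological input beyond the naturality of the homological cross product and \cref{lem:poincarekunneth} is needed; the torsion-freeness hypothesis ensures that the K\"unneth formula provides a clean isomorphism.
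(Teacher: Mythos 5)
Your proposal is correct and follows essentially the same approach as the paper: the paper phrases it as the commutativity of a cube diagram whose faces are the definitions of $\varphi_!,\psi_!,(\varphi\times\psi)_!$ in terms of Poincar\'e duality, \cref{lem:poincarekunneth} applied on each side, and the naturality of the homological cross product under pushforwards, while you unfold the same argument element-wise, with the same sign bookkeeping leading to the same cancellation.
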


\begin{proof}
Consider the following cube
\begin{equation*}
\begin{tikzcd}[column sep=-2ex]
&H_*(X) \otimes H_*(Y) \ar{rr}{\varphi_* \otimes \psi_*} 
\ar[pos=.8]{dd}{\vsimeq}
 &&
 H_*(X') \otimes H_*(Y') 
 \ar{dd}{\vsimeq}
\\
H^*(X) \otimes H^*(Y) \ar[crossing over,"\varphi_! \otimes \psi_!", pos=.7]{rr} 
\ar{dd}{\vsimeq}
 \ar["P_X \otimes P_Y" description]{ru} && H^*(X') \otimes H^*(Y')  \ar["P_{X'} \otimes P_{Y'}" description]{ru}&
\\
&H_*(X \times Y) \ar[pos=.7]{rr}{(\varphi \times \psi)_*} && H_*(X' \times Y')
\\
H^*(X \times Y) \ar[swap]{rr}{(\varphi \times \psi)_!}  \ar["P_{X \times Y}" description]{ru} &&H^*(X' \times Y') \ar[crossing over,leftarrow,pos=.2]{uu}{\vsimeqop} \ar[swap, "P_{X' \times Y'}" description]{ru} &
\end{tikzcd}
\end{equation*}
where all the vertical arrows are given by the K\"unneth theorem. 
The back face commutes by naturality of the K\"unneth formula. 
The face at the bottom commutes by definition of the pushforward. 
By \cref{lem:poincarekunneth}, the left and right faces commute. 
The face at the top commutes up to 
$(-1)^{|\varphi_!||Y'|} = (-1)^{|Y'|(|X| - |X'|)} $ 
for degree reasons, and by the definition of the pushforwards. 
Therefore, the front face 
also commutes up to this term. 
\end{proof}

Denote the category of closed oriented manifolds with torsion-free homology groups and continuous maps by $\Top.$ Cohomology with pullback and pushforward gives a contravariant and a covariant functor  from $\Top$ to $\bZ\gmod$.
The pullback functor is monoidal, while pushforward is only monoidal up to sign as shown in \cref{prop:comppushkunneth}. To address this issue, and to combine pullback and pushforwards functors, we define various enhancements of the category $\Top.$

\begin{defn}\label{def:doubletop}
Let $\DoubleTop$ be the category whose objects are closed oriented smooth manifolds with torsion-free homology groups and morphisms are chains of continuous maps (between such spaces) going alternatingly in one direction and the other. Here we identify $(X \xrightarrow{1_X} X) = (X \xleftarrow{1_X} X)$. Composition is given by concatenation of chains and reduction by composition of maps whenever two continuous maps go in the same direction. 
\end{defn}
\begin{rem} The categories $\Top$ and $\Top^{\operatorname{op}}$ embed into $\DoubleTop$ and generate the morphism spaces. We call morphisms in the image of $\Top,$ resp. $\Top^{\operatorname{op}}$, \emph{covariant}, resp. \emph{contravariant}.
\end{rem}
Let $H^* : \DoubleTop \rightarrow \bZ\gmod$ be the functor that associates to a space its cohomology. A chain of continuous map is sent to the composition of their respective pullbacks or pushfowards (depending on the direction). For example:
\[
H^*\bigl(M \xrightarrow{\varphi_0} N_0 \xleftarrow{\varphi_1} N_1 \xrightarrow{\varphi_2} \cdots \xleftarrow{\varphi_r} M' \bigr)
=
H^*(M) \xrightarrow{\varphi_r^* \circ \cdots \circ (\varphi_2)_! \circ \varphi_1^* \circ (\varphi_0)_! } H^*(M').
\]

Recall that a pair of maps $X \xrightarrow{\varphi} Z \xleftarrow{\psi} Y$ between smooth manifolds are \emph{transverse} whenever $T_p(Z) = \Image(D\varphi_x) + \Image(D\psi_y)$ for all $z = \varphi(x) = \psi(y)$. 
In this situation we can consider the commutative diagram
\[
\begin{tikzcd}
X \times_Z Y 
\ar[swap]{d}{\psi'}
\ar{r}{\varphi'}
&
Y \ar{d}{\psi}
\\
X \ar[swap]{r}{\varphi} & Z
\end{tikzcd}
\]
where $X \times_Z Y$ is the pullback of $X \xrightarrow{\varphi} Z \xleftarrow{\psi} Y$ in the category of smooth manifolds. 
Then, depending on the orientation of $X \times_Z Y $, we obtain a change of basis formula $(\varphi')_! \circ (\psi')^*  =  \pm \psi^* \circ \varphi_!$ (this is similar to \cite[\S1.7]{quillenelementary}). 
We say that $X \times_Z Y$ is the \emph{oriented pullback} if
\begin{equation}\label{eq:transpushpull}
 (\varphi')_! \circ (\psi')^*  = \psi^* \circ \varphi_! : H^*(X) \rightarrow H^*(Y).
\end{equation}

\begin{rem}
Note that in general the oriented pullback $X \times_Z Y$ is isomorphic to $Y \times_Z X$ as manifolds, but the orientation is not preserved. 
\end{rem}

\begin{defn}\label{def:Mdoubletop}
Let $\MDoubleTop$ be the category given by the same objects as $\DoubleTop$ and the same hom-spaces but with the relation
\begin{equation}\label{eq:pushpullrel}
\bigl(X \xrightarrow{\varphi} Z \xleftarrow{\psi} Y\bigr) = \bigl(X \xleftarrow{\varphi'} Z' \xrightarrow {\psi'} Y\bigr),
\end{equation}
whenever  $\varphi$ and $\psi$ are transverse and there is an orientation preserving diffeomorphism $h : Z' \xrightarrow{\simeq} X \times_{Z} Y$ such that the diagram
\[
\begin{tikzcd}
Z' \ar{dr}{h} \ar{r}{\psi'} \ar[swap]{d}{\varphi'} & Y \\
X & X \times_{Z} Y \ar{l} \ar{u}
\end{tikzcd}
\]
commutes.
\end{defn}

The functor $H^* : \DoubleTop \rightarrow \bZ\gmod$ descends to $\MDoubleTop$ by \cref{eq:transpushpull}. 
\begin{rem}
It is very suggestive to consider a category of manifolds where morphisms are given by correspondences/spans  
$$\bigl(X \leftarrow A \rightarrow Y\bigr)$$
where $A$ is a manifold and composition is given by the fiber product
$$\bigl(X \leftarrow A \xrightarrow {\varphi} Y\bigr)\circ\bigl(Y\xleftarrow{\psi} B \rightarrow Z\bigr)=\bigl(X \leftarrow A\times_YB \rightarrow Y\bigr).$$
Or in other words, extend relation \eqref{eq:pushpullrel} to non-transverse maps. However, this does not work without overcoming major technical difficulties. First, $A\times_YB$ does not need to be a manifold and composition is not well-defined. Rather, one would need to work with some kind of \emph{derived manifold}, which is not in the scope of this paper. 
Second, even if  $A \times_Y B$ is a manifold (and an additional \emph{cleanliness} assumption of $\varphi$ and $\psi$ is fulfilled), the composition would not be compatible with cohomology, since equation \eqref{eq:transpushpull} becomes wrong in general. This could be corrected by considering the correspondence $\bigl(X \leftarrow A\times_YB \rightarrow Y\bigr)$ together with the additional data of the \emph{excess bundle} on $A \times_Y B$ and by incorporating a multiplication with the Euler class of this bundle into the functor $H^*.$
\end{rem}
For any covariant map $\varphi : X \rightarrow X'$ (resp. contravariant map $\psi : X \leftarrow X'$) of closed oriented manifolds, we can consider the zigzags 
\begin{align*}
&\bar \varphi := \bigl( X \xleftarrow{1_X} X^{-1} \xrightarrow{\varphi} X' \bigr) \overset{\eqref{eq:pushpullrel}}{=} \bigl( X  \xrightarrow{\varphi} (X')^{-1} \xleftarrow{1_{X'}} X' \bigr),  
\\
\text{(resp. } 
&\bar \psi := \bigl(  X \xleftarrow{\psi} (X')^{-1} \xrightarrow{1_{X'}} X' \bigr) 
 \overset{\eqref{eq:pushpullrel}}{=}  
\bigl( X \xrightarrow{1_{X}} X^{-1} \xleftarrow{\psi} X' \bigr)
\text{)}
\end{align*}
in $\MDoubleTop$, where $X^{-1}$ is the manifold $X$ with opposite orientation. 
We extend the definition to any zigzag by declaring that $\overline{(\varphi \circ \psi)} := \bar \varphi \circ \psi \overset{\eqref{eq:pushpullrel}}{=} \varphi \circ \bar \psi$ for any pair of zigzags $\varphi, \psi$. 
Note that we have $\overline{\overline{\varphi}} = \varphi$, and  
\[
H^*(\bar \varphi) = - H^*(\varphi),
\] 
for any zigzag $\varphi$.

We linearize $\MDoubleTop$ over $\bZ$ by allowing finite sums of morphisms, and identifying $-\varphi := \bar \varphi$, giving a $\bZ$-linear category $\linMDoubleTop$. 
Moreover, elements in the hom-spaces of $\linMDoubleTop$ carry a $\bZ$-degree with contraviant maps being in degree zero, and a covariant map $\varphi : X \rightarrow X'$ being in degree $\deg(\varphi) := |X'| - |X|$. 
Thus, $\linMDoubleTop$ is a $\bZ\gmod$-enriched category. 

\smallskip

We would like to equip $\linMDoubleTop$ with a $\bZ$-graded monoidal product by using the cartesian product, so that the cohomological functor becomes graded monoidal. 
However, because of \cref{prop:comppushkunneth} we need to use a twisted version of the cartesian product where we fix the sign when taking the product with a covariant maps. 
Moreover, it is a priori not clear how to define the cartesian product of a covariant map with a contravariant map. We define it by taking the cartesian product of each element with the identity and composing the two maps we obtain. There are two ways to do that (see below), but because of \cref{eq:pushpullrel}, they are the same.

We turn $\linMDoubleTop$  into a  $\bZ$-graded monoidal category by equipping it with the \emph{twisted cartesian} product $\htimes$ given by:
\begin{itemize}
\item $X \htimes Y := X \times Y$ is the cartesian product on objects, with orientation induced by the cross product $[f_{X \times Y}] := [f_X] \times [f_Y]$;
\item the unit object is given by the $0$-dimensional point manifold $\{\star\}$, with left and right unitor given by the obvious inclusion maps $\{\star\} \times X \leftarrow X$ and $X \times \{ \star \} \leftarrow X$ respectively;
\item for a pair of contravariant maps $\varphi : X \leftarrow X'$ and $\psi : Y \leftarrow Y'$, we put 
\[
(\varphi \htimes \psi) := (\varphi \times \psi);
\]
\item for a contravariant map $\varphi : X \leftarrow X'$ and a covariant map $\psi : Y \rightarrow Y'$, we put 
\begin{align*}
(\varphi \htimes \psi) :&= 
\bigl(X \times Y \xrightarrow{1_X \times \psi} X \times Y' \xleftarrow{\varphi \times 1_{Y'}} X' \times Y' \bigr)
\\
&\overset{\eqref{eq:pushpullrel}}{=}
\bigl(X \times Y \xleftarrow{\varphi \times 1_Y} X' \times Y  \xrightarrow{1_{X'} \times \psi} X' \times Y'\bigr),
\end{align*}
and for the opposite case  of a contravariant $\varphi : X \rightarrow X'$ and a covariant map $\psi : Y \leftarrow Y'$, we put 
\begin{align*}
(\varphi \htimes \psi) :&= 
\bigl(X \times Y \xleftarrow{1_X \times \psi} (X \times Y')^{(-1)^{(|X'|-|X|)|Y'|}} \xrightarrow{\varphi \times 1_{Y'}} X' \times Y' \bigr)
\\
&\overset{\eqref{eq:pushpullrel}}{=}
\bigl(X \times Y \xrightarrow{\varphi \times 1_Y} (X' \times Y)^{(-1)^{(|X'|-|X|)|Y|}}  \xleftarrow{1_{X'} \times \psi} X' \times Y'\bigr);
\end{align*}
\item for a pair of covariant maps $\varphi : X \rightarrow X'$ and $\psi : Y \rightarrow Y'$, we put 
\begin{equation}\label{eq:htimescov}
(\varphi \htimes \psi) := 
(-1)^{(|X'|-|X|)|Y'|}(\varphi \times \psi).
\end{equation}
\end{itemize}
Note that because of \cref{eq:pushpullrel} we can extend the definition to any pair of zigzag unambiguously, so that 
$\htimes$ becomes a bifunctor. 
The coherence isomorphism is given by the contravariant map $(X \times Y) \times Z \leftarrow X \times (Y \times Z)$ given by the coherence isomorphism of the cartesian product. 
The coherence relations are immediate by coherence on the cartesian product. 
Naturality of the coherence isomorphisms is lengthy, but straightforward to verify.

Then, we equip $\linMDoubleTop$ with a graded symmetry given by the isomorphism $\tau_{X,Y} : X \htimes Y \rightarrow Y \htimes X$  obtained from the contravariant map 
\[
(\tau_{X,Y} : X \htimes Y \xrightarrow{\simeq} Y \htimes X) := \bigl(X \times Y \xleftarrow{\sigma} Y \times X, \quad \sigma(y,x) := (x,y) \bigr).
\]
Note that the  isomorphism $\tau_{X,Y} : X \htimes Y \xrightarrow{\simeq} Y \htimes X$ is graded natural in $X$ and $Y$, because of \cref{eq:pushpullrel} again.

The functor $H^* : \MDoubleTop \rightarrow \bZ\gmod$ extends to a functor $H^* : \linMDoubleTop \rightarrow \bZ\gmod$ which is  graded symmetric monoidal. The coherence isomorphism $H^*(X \htimes Y) \xrightarrow{\simeq} H^*(X) \otimes H^*(Y)$ is given by the K\"unneth formula, and it is natural by \cref{prop:comppushkunneth}.

\subsection{Geometric construction of $\oddTQFT$}

Recall $T^0 = \{\star\}$ is a point manifold, and thus $T^m \times T^0 \cong T^m$. 
We equip $T^0$ with orientation $[f_{T^0}] := [\{\star\}]$, $T^1$ with a fixed orientation $[f_{T^1}]$,  
and 
$T^m = T^1 \times \cdots \times T^1$ 
 with induced orientation $[f_{T^m}] := [f_{T^1}] \times \cdots \times [f_{T^1}]$.

Recall that
\[
H^*(T^m) \cong A^{\otimes m},
\]
as a graded abelian group, for all $m > 0$, and $H^*(T^0) \cong \bZ$. 
We fix the isomorphisms 
$
f^0 : H^*(T^0) \xrightarrow{\simeq}  \bZ
$
 by sending the dual of $[f_{T^0}]$ to $1$, 
and
$
f^1 : H^*(T^1) \xrightarrow{\simeq} A
$
by sending the dual of $[f_{T^1}]$ to $v_-$. 
By the K\"unneth theorem, we have a commutative diagram
\[
\begin{tikzcd}
H^*(T^{m_1}) \otimes H^*(T^{m_2}) 
\ar[swap]{d}{- \smile -}
\ar["f^{m_1} \otimes f^{m_2}"']{r}{\simeq}
&
A^{\otimes m_1} \otimes A^{\otimes m_2}
\ar{d}{\vsimeq}
\\
H^*(T^{m_1+m_2})
\ar["f^{m_1+m_2}"']{r}{\simeq}
&
A^{\otimes (m_1+m_2)},
\end{tikzcd}
\]
which we use to fix inductively an isomorphism $f^m : H^*(T^m) \xrightarrow{\simeq} A^{\otimes m}$.

Consider the maps
\begin{align*}
\Delta &: T^1 \hookrightarrow T^2,  & x &\mapsto (x,x),
\\
\varepsilon &: T^1 \twoheadrightarrow T^0, & x&\mapsto \star,
\\
\eta &: T^0 \hookrightarrow T^1, & \star &\mapsto  p,
\\
\tau &: T^2 \rightarrow T^2, & (x,y) &\mapsto (y,x).
\end{align*}

\begin{lem}\label{lem:pullcompute}
We have
\begin{align*}
\Delta^* &: H^*(T^2) \cong A^{\otimes 2} \rightarrow A \cong H^*(T^1), & 
&\begin{cases}
v_+ \otimes v_+ \mapsto v_+, & v_+ \otimes v_- \mapsto v_-, \\
v_- \otimes v_- \mapsto 0, & v_- \otimes v_+ \mapsto v_-,
\end{cases}
\\
\varepsilon^* &: H^*(T^0) \cong \bZ \rightarrow A \cong H^*(T^1), &
&\begin{cases}
1 \mapsto v_+,
\end{cases}
\\
\eta^* &: H^*(T^1) \cong A \rightarrow \bZ \cong  H^*(T^0), &
&\begin{cases}
v_+ \mapsto 1,
&
v_- \mapsto 0,
\end{cases}
\end{align*}
and $\tau^* \cong \tau_{A,A}$. 
\end{lem}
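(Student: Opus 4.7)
The plan is to reduce each of the four computations to standard properties of the cohomology ring of the circle together with naturality of the cross product.

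I first unpack the identifications $f^0$ and $f^1$. By definition, $f^0$ sends the dual of $[f_{T^0}]$ to $1$, i.e.\ identifies the generator $1 \in \bZ$ with the unit of $H^*(T^0)$, while $f^1$ sends $v_- \in A$ to the canonical generator of $H^1(T^1)\cong\bZ$ dual to $[f_{T^1}]$. Since the $f^m$'s are defined inductively so as to intertwine the K\"unneth cross product with the canonical isomorphism $A^{\otimes m_1}\otimes A^{\otimes m_2}\to A^{\otimes(m_1+m_2)}$, the element $v_+\in A$ is necessarily identified with the multiplicative unit $1\in H^0(T^1)$. Consequently the cup product on $A\cong H^*(T^1)\cong\bZ[x]/(x^2)$ satisfies $v_+\smile v_+=v_+$, $v_+\smile v_-=v_-\smile v_+=v_-$, and $v_-\smile v_-=0$.

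The computations of $\varepsilon^*$ and $\eta^*$ are then immediate: $\varepsilon$ is a constant map, so $\varepsilon^*$ is a unital ring homomorphism and sends $1\mapsto 1=v_+$; and $\eta^*$ is a graded ring homomorphism $A\to\bZ$ that sends $v_+=1$ to $1$, while $\eta^*(v_-)=0$ for degree reasons since $H^1(T^0)=0$. For $\Delta^*$ I would use the standard identity $\Delta^*(\alpha\times\beta)=\alpha\smile\beta$; combined with the description of $v_{\varepsilon_1}\otimes v_{\varepsilon_2}$ as the cross product under the K\"unneth identification, the four values follow directly from the cup-product table above.

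For $\tau^*$ I would apply naturality of the projections $\pi_i$ under $\tau$: for homogeneous $\alpha,\beta\in H^*(T^1)$ one has $\tau^*(\alpha\times\beta)=\pi_2^*\alpha\smile\pi_1^*\beta$, which by graded commutativity of $\smile$ equals $(-1)^{|\alpha||\beta|}\pi_1^*\beta\smile\pi_2^*\alpha=(-1)^{|\alpha||\beta|}(\beta\times\alpha)$. Under $f^2$ this is precisely the graded swap $\tau_{A,A}$. The only mild obstacle is to check that the K\"unneth sign used in defining $f^m$ is compatible with the sign in $\tau_{A,A}$; this is the routine but easily-mishandled bookkeeping step, and is settled by the fact that the cross product $\alpha\times\beta:=\pi_1^*\alpha\smile\pi_2^*\beta$ carries no extra sign, so that graded commutativity of $\smile$ translates verbatim into the sign in $\tau_{A,A}$.
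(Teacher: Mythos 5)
Your proof is correct and follows essentially the same approach as the paper's (which simply says "this is immediate by working with cellular cohomology and identifying $v_+$ with the dual of the $0$-cell and $v_-$ with the dual of the $1$-cell"). Your version fills in the details at a slightly more abstract level — using the ring-homomorphism property of pullbacks, the identity $\Delta^*(\alpha\times\beta)=\alpha\smile\beta$, and the degree argument for $\eta^*(v_-)$ — rather than inspecting cellular cochains directly, but these are two phrasings of the same elementary verification, and your sign-check for $\tau^*$ against the convention built into $f^m$ is a worthwhile precaution the paper leaves implicit.
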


\begin{proof}
This is immediate by working with cellular cohomology and identifying $v_+$ in $A$ with the dual of the $0$-cell and $v_-$ with the dual of the $1$-cell inside of $T^1$. 
\end{proof}

\begin{lem}\label{lem:pushcomputeDelta}
We have
\begin{align*}
\Delta_! &: H^*(T^1) \cong A \rightarrow A^{\otimes 2} \cong H^*(T^2) , 
&
\begin{cases}
v_+ \mapsto - (v_- \otimes v_+ - v_+ \otimes v_-),  &\\
v_- \mapsto - (v_- \otimes v_-). &
\end{cases}
\end{align*}
\end{lem}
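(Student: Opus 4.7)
The plan is to compute $\Delta_!$ directly from its definition $\Delta_! = P_{T^2}^{-1} \circ \Delta_* \circ P_{T^1}$. I will evaluate each factor on the basis $\{v_+, v_-\}$ of $H^*(T^1) \cong A$ and read off the answer in $A \otimes A \cong H^*(T^2)$ using the K\"unneth--Poincar\'e compatibility of \cref{lem:poincarekunneth}.

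First, I unwind the two Poincar\'e duality maps on $T^1$. By construction of the isomorphism $f^1$, the class $v_-$ is the dual of the fundamental class $[f_{T^1}]$, so $P_{T^1}(v_-) = [f_{T^1}] \frown v_- = [\mathrm{pt}] \in H_0(T^1)$, while $v_+$ is the cohomological unit, giving $P_{T^1}(v_+) = [f_{T^1}]$. On homology, $\Delta_*([\mathrm{pt}])$ is the class of a point in $T^2$, which equals $[\mathrm{pt}] \times [\mathrm{pt}]$ under the K\"unneth identification. The class $\Delta_*([f_{T^1}])$ is the fundamental class of the diagonal circle in $T^2$; applying the two projections $\pi_1, \pi_2 : T^2 \to T^1$, which both satisfy $\pi_i \circ \Delta = \mathrm{id}$, and using that a class in $H_1(T^2)$ is determined by its images under $\pi_{1*}$ and $\pi_{2*}$, one obtains
\[
\Delta_*([f_{T^1}]) = [f_{T^1}] \times [\mathrm{pt}] + [\mathrm{pt}] \times [f_{T^1}].
\]

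To invert $P_{T^2}$ I feed $(X,Y) = (T^1, T^1)$ into \cref{lem:poincarekunneth}. Since $|P_{T^1}| = -|T^1| = -1$, the compatibility reads $P_{T^2}(x \smile y) = (-1)^{-|x|}\, P_{T^1}(x) \times P_{T^1}(y)$, so
\begin{align*}
P_{T^2}(v_+ \otimes v_-) &= [f_{T^1}] \times [\mathrm{pt}], &
P_{T^2}(v_- \otimes v_+) &= -[\mathrm{pt}] \times [f_{T^1}], &
P_{T^2}(v_- \otimes v_-) &= -[\mathrm{pt}] \times [\mathrm{pt}].
\end{align*}
Assembling everything,
\[
\Delta_!(v_+) = P_{T^2}^{-1}\bigl([f_{T^1}] \times [\mathrm{pt}] + [\mathrm{pt}] \times [f_{T^1}]\bigr) = v_+ \otimes v_- - v_- \otimes v_+ = -(v_- \otimes v_+ - v_+ \otimes v_-),
\]
and $\Delta_!(v_-) = P_{T^2}^{-1}([\mathrm{pt}] \times [\mathrm{pt}]) = -v_- \otimes v_-$, which are the stated formulas.

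The main obstacle is sign bookkeeping. The overall $-1$ on the right-hand sides of the claimed formulas is \emph{not} a topological feature of the diagonal class but comes purely from the graded K\"unneth--Poincar\'e compatibility, whose sign is forced by the convention that $H_i$ sits in degree $-i$. Identifying the diagonal class in $H_1(T^2)$ is standard but still needs care; the projection argument above bypasses any appeal to a cellular model.
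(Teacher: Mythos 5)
Your proof is correct, and it takes a genuinely different route from the paper's. The paper's proof works entirely on the cohomology side: it introduces the classes $\alpha,\beta,\gamma$ dual to $[f_{T^1}]$, $[f_{T^1}]\times 1$, $1\times[f_{T^1}]$ and pins down $P_{T^2}$ on $\beta$, $\gamma$, $\beta\smile\gamma$ by explicit evaluation against fundamental classes, using the cup/cap formula $\gamma([a]\frown\beta)=(\beta\smile\gamma)([a])$ together with the cross/cap formula \eqref{eq:crosscap}; only then does it assemble $\Delta_!=P_{T^2}^{-1}\circ\Delta_*\circ P_{T^1}$. You instead push everything to homology, identify $\Delta_*$ on $[f_{T^1}]$ via the projection trick $\pi_i\circ\Delta=\mathrm{id}$, and invert $P_{T^2}$ by directly citing \cref{lem:poincarekunneth}, which reduces the sign bookkeeping on $T^2$ to the known degree of $P_{T^1}$. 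Your route leverages a lemma the paper has already proved and localizes the overall $-1$ to a single application of that lemma, which is a nice conceptual point (the sign is a Koszul artifact of the $\bZ$-graded conventions, not a topological feature of the diagonal); the paper's route is somewhat more self-contained, never invoking \cref{lem:poincarekunneth}, at the cost of more explicit pairing computations. Both are valid, and the crucial input $\Delta_*\bigl([f_{T^1}]\bigr)=[f_{T^1}]\times[\mathrm{pt}]+[\mathrm{pt}]\times[f_{T^1}]$ appears in both (the paper asserts it as definitional for $[b],[c]$, you justify it via projections, which is the more careful choice).
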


\begin{proof}
The idea of the proof is to use the cup/cap formula 
\begin{equation}\label{eq:cupcap}
\gamma([a] \frown \beta) = (\beta \smile \gamma)([a]),
\end{equation}
for $\beta,\gamma \in H^*(X)$ and $[a] \in H_*(X)$, and evaluate against elements in cohomology. 

Let $\alpha$ be the dual of $[a] := [f_{T^1}]$, so that $P_{T_1}(1) = [a]$ and $P_{T_1}(\alpha) = 1$. 
Let $[b] := [f_{T^1}] \times 1$ and $[c] := 1 \times [f_{T_1}]$ be the generating $1$-cells in $T^2 = T^1 \times T^1$ such that $\Delta_*([a]) = [b] + [c]$, and let $\beta$ and $\gamma$ be their respective dual. 
We have $[f_{T^2}] = [b] \times [c]$.  
Then, we obtain
\begin{align}\label{eq:betagammabtimesc}
(\beta \smile \gamma)\bigl( [b] \times [c] \bigr) 
\overset{\eqref{eq:cupcap}}{=}
 ([b] \times [c]) \frown (\beta \smile \gamma)
\overset{\eqref{eq:crosscap}}{=} -1.
\end{align}
Also, we have, $P_{T_2}(1) = [b] \times [c]$, and we compute 
\begin{align*}
\beta \bigl(P_{T_2}(\beta)\bigr) &= 0,
&
\beta \bigl(P_{T_2}(\gamma)\bigr) &=  \beta \bigl( ([b] \times [c]) \frown \gamma \bigr) 
\\
&
&
&\overset{\eqref{eq:cupcap}}{=}   (\gamma \smile \beta) \bigl( [b] \times [c] \bigr)  \overset{\eqref{eq:betagammabtimesc}}{=} 1,
\\
\gamma \bigl(P_{T_2}(\beta)\bigr) &=  \gamma \bigl( ([b] \times [c]) \frown \beta \bigr) 
&
\gamma \bigl(P_{T_2}(\gamma)\bigr) &=  0.
\\
&\overset{\eqref{eq:cupcap}}{=}    (\beta \smile \gamma) \bigl( [b] \times [c] \bigr)  \overset{\eqref{eq:betagammabtimesc}}{=} -1,
&
&
\end{align*}
Thus, $P_{T_2}(\beta) = -[c]$ and $P_{T_2}(\gamma) = [b]$, and by \cref{eq:betagammabtimesc}, $P_{T_2}(\beta \smile \gamma) = -1$. 

Using these results, we compute
\begin{align*}
\Delta_!(1) &= P_{T_2}^{-1} \circ \Delta_* \circ P_{T_1}(1) = -(\beta - \gamma), 
&
\Delta_!(\alpha) &= - (\beta \smile \gamma).
\end{align*}
We conclude by identifying $\alpha \mapsto v_-$, and $\beta \mapsto v_- \otimes v_+, \gamma \mapsto v_+ \otimes v-$ and $\beta \smile \gamma \mapsto v_- \otimes v_-$. 
\end{proof}

\begin{lem}\label{lem:pushcomputeEpsilon}
We have
\begin{align*}
\varepsilon_! &: H^*(T^1) \cong A \rightarrow \bZ \cong H^*(T^0), 
\quad 
\begin{cases}
v_+ \mapsto 0,  &\\
v_- \mapsto 1. &
\end{cases}
\end{align*}
\end{lem}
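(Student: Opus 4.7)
The plan is to compute $\varepsilon_!$ directly from its definition $\varepsilon_! = P_{T^0}^{-1} \circ \varepsilon_* \circ P_{T^1}$, following the same template as the proof of \cref{lem:pushcomputeDelta} but in the much simpler one-dimensional setting. First, a degree argument disposes of $v_+$: since $|\varepsilon_!| = |T^0| - |T^1| = -1$ and $H^*(T^0) \cong \bZ$ is concentrated in degree $0$, the class $v_+ \in H^0(T^1)$ is forced to land in $0$.

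For $v_-$, I would let $\alpha \in H^1(T^1)$ denote the dual of $[f_{T^1}]$, so that $f^1(\alpha) = v_-$. Using the cup/cap formula \eqref{eq:cupcap} with $\beta = 1 \in H^0(T^1)$, one checks that $P_{T^1}(\alpha) = [f_{T^1}] \frown \alpha$ pairs with $1$ to give $\alpha([f_{T^1}]) = 1$, so $P_{T^1}(\alpha)$ is the class of a point $[pt] \in H_0(T^1)$. Since $\varepsilon$ collapses everything to the basepoint, $\varepsilon_*([pt]) = [f_{T^0}] \in H_0(T^0)$. Finally, $P_{T^0}(1) = [f_{T^0}] \frown 1 = [f_{T^0}]$, so $P_{T^0}^{-1}([f_{T^0}]) = 1 \in H^0(T^0)$, which corresponds to $1 \in \bZ$ under the normalization $f^0$. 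Assembling, $\varepsilon_!(v_-) = 1$.

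I do not expect any real obstacle: the computation is a direct application of the definitions, and the only potential subtlety (a possible sign discrepancy) is avoided because all the relevant cap products involve the degree-zero class $1$, so none of the Koszul signs that complicated the proof of \cref{lem:pushcomputeDelta} appear here. The only thing to double-check is the normalization of $f^0$ and $f^1$, namely that $v_-$ is the dual of $[f_{T^1}]$ and $1 \in \bZ$ is the image of the dual of $[f_{T^0}]$, both of which are fixed earlier in the section.
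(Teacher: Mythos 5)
Your proof is correct and follows essentially the same route as the paper's: unwinding $\varepsilon_! = P_{T^0}^{-1}\circ\varepsilon_*\circ P_{T^1}$, noting $P_{T^1}(1)=[f_{T^1}]$ dies under $\varepsilon_*$ while $P_{T^1}(\alpha)$ is a point class mapped to $[f_{T^0}]$. The paper states this more tersely, and your added degree argument for $v_+\mapsto 0$ is a harmless shortcut; the only nit is that in invoking \eqref{eq:cupcap} you should take $\gamma=1$ (not $\beta=1$) to get $1\bigl([f_{T^1}]\frown\alpha\bigr)=\alpha([f_{T^1}])$.
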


\begin{proof}
Let $[a] := [f_{T^1}]$ and $\alpha$ be its dual.
 We compute $\varepsilon_*(1) = 1$ and $\varepsilon_*([a]) = 0$. 
Thus, $\varepsilon_!(\alpha) = 1$ and $\varepsilon_!(1) = 0$. 
\end{proof}

We also compute the following, for later use:

\begin{lem}\label{lem:pushcomputeEta}
We have
\begin{align*}
\eta_! &: H^*(T^0) \cong \bZ \rightarrow A \cong H^*(T^1), 
\quad
1 \mapsto  v_.  
\end{align*}
\end{lem}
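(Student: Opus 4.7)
The plan is to unpack the definition $\eta_! := P_{T^1}^{-1} \circ \eta_* \circ P_{T^0}$ and trace $1 \in H^*(T^0) \cong \bZ$ through each map, mirroring the strategy used in the proofs of \cref{lem:pushcomputeDelta} and \cref{lem:pushcomputeEpsilon}. This is the most direct approach: since $\eta$ is the inclusion of a single point into $T^1$, both the Poincaré duality isomorphisms and the induced map on homology admit explicit descriptions.

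First, I would compute $P_{T^0}(1) = [f_{T^0}] \frown 1 = [\{\star\}] \in H_0(T^0)$. Next, since $\eta(\star) = p$, the covariant functoriality of singular homology gives $\eta_*([\{\star\}]) = [p] \in H_0(T^1)$. The last and only nontrivial step is to identify $P_{T^1}^{-1}([p]) \in H^1(T^1)$. Let $\alpha \in H^1(T^1)$ denote the dual of $[f_{T^1}]$, so that $f^1(\alpha) = v_-$ by the fixed normalization preceding \cref{lem:pullcompute}. I would show $P_{T^1}(\alpha) = [p]$ by pairing against $H^0(T^1)$: using the cup/cap formula \eqref{eq:cupcap}, for the generator $1 \in H^0(T^1)$,
\[
1\bigl([f_{T^1}] \frown \alpha\bigr) = (\alpha \smile 1)([f_{T^1}]) = \alpha([f_{T^1}]) = 1,
\]
so $[f_{T^1}] \frown \alpha$ is the class of a single point in $H_0(T^1) \cong \bZ$, namely $[p]$. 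Consequently $P_{T^1}^{-1}([p]) = \alpha$, and under $f^1$ this is $v_-$.

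Putting the three steps together yields $\eta_!(1) = v_-$, as claimed. The main (and only) subtlety is step three, the Poincaré-dual identification $P_{T^1}(\alpha) = [p]$; all other steps are formal. No signs appear because, unlike the situation in \cref{lem:pushcomputeDelta}, we never cross two odd-degree classes under the cross/cap formula \eqref{eq:crosscap}, so the computation is sign-free.
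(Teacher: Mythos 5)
Your proposal is correct and follows essentially the same route as the paper's proof: unpack $\eta_! = P_{T^1}^{-1}\circ\eta_*\circ P_{T^0}$, observe that $\eta_*$ sends the generator of $H_0(T^0)$ to the generator of $H_0(T^1)$, and identify the preimage under $P_{T^1}$ as $\alpha$, the dual of $[f_{T^1}]$, which corresponds to $v_-$. You simply spell out the Poincaré-duality step $P_{T^1}(\alpha)=[p]$ that the paper leaves implicit.
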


\begin{proof}
We have $\eta_*(1) = 1$. Thus, $\eta_!(1) = \alpha$, where $\alpha$ is the dual of $[f_{T^1}]$. 
\end{proof}

\smallskip
Furthermore, there is an (almost monoidal) functor $\TopFunctor : \chcobcat \rightarrow \MDoubleTop$ sending $S^1$ to $T^1$, the disjoint union product to the twisted cartesian product, 
\begin{align*}
\TopFunctor\left(\tikzdiagh[scale=.5]{
	\draw (0,0) .. controls (0,1) and (1,1) .. (1,2);
	\draw (1,0) .. controls (1,1) and (2,1) .. (2,0);
	\draw (3,0) .. controls (3,1) and (2,1) .. (2,2);
	\draw (0,0) .. controls (0,-.25) and (1,-.25) .. (1,0);
	\draw[dashed] (0,0) .. controls (0,.25) and (1,.25) .. (1,0);
	\draw (2,0) .. controls (2,-.25) and (3,-.25) .. (3,0);
	\draw[dashed] (2,0) .. controls (2,.25) and (3,.25) .. (3,0);
	\draw (1,2) .. controls (1,1.75) and (2,1.75) .. (2,2);
	\draw (1,2) .. controls (1,2.25) and (2,2.25) .. (2,2);
	\draw [->] (1.15,.15) -- (1.85,.15);
}\right) 
&:= T^2 \xleftarrow{\Delta} T^1, 
&
\TopFunctor\left(\tikzdiagh[xscale=.5,yscale=-.5]{
	\draw (0,0) .. controls (0,1) and (1,1) .. (1,2);
	\draw (1,0) .. controls (1,1) and (2,1) .. (2,0);
	\draw (3,0) .. controls (3,1) and (2,1) .. (2,2);
	\draw (0,0) .. controls (0,-.25) and (1,-.25) .. (1,0);
	\draw (0,0) .. controls (0,.25) and (1,.25) .. (1,0);
	\draw (2,0) .. controls (2,-.25) and (3,-.25) .. (3,0);
	\draw (2,0) .. controls (2,.25) and (3,.25) .. (3,0);
	\draw[dashed] (1,2) .. controls (1,1.75) and (2,1.75) .. (2,2);
	\draw (1,2) .. controls (1,2.25) and (2,2.25) .. (2,2);
	\draw [<-] (1.25,.45) -- (1.75,-.15);
}\right) &:= T^1 \xrightarrow{\Delta} T^2,
&
\TopFunctor\left(\right) &:=T^0 \xleftarrow{\varepsilon} T^1,
\end{align*}
and
\begin{align*}
\TopFunctor\left(\right) &:=T^1 \xrightarrow{\varepsilon} T^0,
&
\TopFunctor\left(\right) &:= T^1 \xleftarrow{1} (T^1)^{-1} \xrightarrow{\varepsilon} T^0,
\end{align*}
and the twist is sent to $\tau_{T^1,T^1}$. 

\begin{prop}
 The functor $\TopFunctor : \chcobcat \rightarrow \MDoubleTop$ is well-defined.
\end{prop}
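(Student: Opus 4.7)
The plan is to verify that the assignment of $\TopFunctor$ on generators respects every defining relation of $\chcobcat$. Recall that $\chcobcat$ is presented by the five elementary cobordisms and the twist, modulo isotopy preserving the relative heights of critical points, the distant commutation \eqref{eq:twistcommutes}, the symmetric group axioms \eqref{eq:symtwist}, the orientation-flipping relations \eqref{eq:twisterorientation}, and the exchange relations displayed just above \eqref{eq:reverseorientation}. Since $\TopFunctor$ is defined on generators, isotopy invariance follows once the explicit defining relations are checked.

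The distant commutation and symmetric group relations follow directly from the symmetric monoidal structure of the cartesian product on smooth manifolds: the swap $\sigma : T^1 \times T^1 \to T^1 \times T^1$ is strictly natural with respect to maps on other factors, is an involution, and satisfies the braid relation on $T^1 \times T^1 \times T^1$. These identities of underlying continuous maps lift directly to $\MDoubleTop$ without any sign correction. The orientation-flipping identities \eqref{eq:twisterorientation} reduce to the symmetry $\sigma \circ \Delta = \Delta$ of the diagonal, so both orientations of the split (respectively merge) are sent by $\TopFunctor$ to the same covariant (respectively contravariant) map, and the identities hold automatically. The analogous symmetry applied to the maps into/out of the point manifold handles the corresponding relations involving the birth and death cobordisms.

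The bulk of the verification is the exchange relations, which state that two cobordisms on disjoint tensor factors commute up to twists. In $\MDoubleTop$, this becomes the statement that two zigzags built from $\Delta$, $\varepsilon$, $\eta$, and $\sigma$ acting on disjoint cartesian factors are equal. Two observations suffice: first, continuous maps on disjoint factors strictly commute, $(f \times 1) \circ (1 \times g) = f \times g = (1 \times g) \circ (f \times 1)$; second, whenever one needs to move a pushforward past a pullback on a disjoint factor, the associated square of maps is a trivially transverse pullback square in the category of smooth manifolds, so the relation \eqref{eq:pushpullrel} of $\MDoubleTop$ identifies the two zigzags. The main obstacle is the case-by-case bookkeeping across the various exchange relations, but each individual case is a routine diagram chase using these two facts. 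Crucially, no signs appear in this verification because we are in $\MDoubleTop$ rather than $\linMDoubleTop$; the signs are introduced only later when passing to the linearized category where $\TopFunctor$ lifts to $\linTopFunctor$.
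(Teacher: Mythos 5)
Your overall strategy matches the paper's: $\TopFunctor$ is defined on generators, and the defining relations of $\chcobcat$ are preserved because maps on disjoint cartesian factors fit into transverse pullback squares, so that \eqref{eq:pushpullrel} allows a covariant and a contravariant map on disjoint factors to be commuted past one another. This is precisely what the paper's proof invokes for both \eqref{eq:twistcommutes} and the exchange relations; you invoke it only for the latter, and your claim that \eqref{eq:twistcommutes} ``lifts directly \ldots\ without any sign correction'' from the symmetric monoidal structure of $(\Top,\times)$ is too quick---when $f$ contains a covariant piece (a split or a death) one still needs \eqref{eq:pushpullrel} to identify the two zigzags.

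More seriously, your justification of \eqref{eq:twisterorientation} contains an error. The identity $\sigma\circ\Delta=\Delta$ gives $\Delta^*\circ\sigma^*=\Delta^*$, so indeed both orientations of the \emph{merge} agree in $\MDoubleTop$. But the split is covariant: the same identity gives $\sigma_!\circ\Delta_!=\Delta_!$, and since $\sigma$ is an orientation-reversing involution of $T^2$ one has $\sigma^*=-\sigma_!$, hence $\sigma^*\circ\Delta_!=-\Delta_!$ (equivalently, check this directly against \cref{lem:pullcompute} and \cref{lem:pushcomputeDelta}). The two orientations of the split are therefore sent to zigzags differing by an orientation flip of the intermediate object---which is exactly the source of the minus sign in the second relation of \eqref{eq:reverseorientation}. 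This false intermediate claim happens not to undermine the conclusion, because \eqref{eq:twisterorientation} is a topological observation about $\chcobcat$ and not one of the relations in the combinatorial presentation given in \cref{sec:chcobcat} that $\TopFunctor$ is required to preserve; the paper's proof accordingly checks only \eqref{eq:twistcommutes}, \eqref{eq:symtwist}, and the exchange relations. But it does indicate that the orientation bookkeeping in your argument is not being tracked correctly.
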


\begin{proof}
We only need to verify the relations involving the twist. \cref{eq:twistcommutes} holds  because of \cref{eq:pushpullrel}.
\cref{eq:symtwist} is clear by definition of $\tau_{T^1,T^1}$. 
Finally, the exchange relations are respected also because of \cref{eq:pushpullrel}.  
\end{proof}

It induces a graded symmetric monoidal functor $\linTopFunctor : \linchcobcat \rightarrow \MDoubleTop$, such that we obtain a commutative diagram:
\begin{equation*}
\begin{tikzcd}[column sep = 7ex]
\chcobcat \ar{d} \ar{r}{\TopFunctor} & \MDoubleTop \ar{d},
\\
\linchcobcat\ar[swap]{r}{\linTopFunctor} & \linMDoubleTop
\end{tikzcd}
\end{equation*}
where the functors $\chcobcat \rightarrow \linchcobcat$ and $\MDoubleTop \rightarrow \linMDoubleTop$ are the obvious one.

\begin{prop}
The functor $\linTopFunctor : \linchcobcat \rightarrow \linMDoubleTop$ is well-defined.
\end{prop}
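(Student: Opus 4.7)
The plan is to verify that $\TopFunctor : \chcobcat \to \MDoubleTop$, already well-defined by the preceding proposition, descends to $\linTopFunctor$ by respecting each of the additional relations \eqref{eq:reverseorientation}, \eqref{eq:chcobrelbirthmerge}, \eqref{eq:cobcommute1} and \eqref{eq:cobcommute2} imposed on the linearization. Since the unlinearized functor is known, all that remains is to check that these four families of relations become valid identities in $\linMDoubleTop$ after applying $\TopFunctor$.

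For \eqref{eq:reverseorientation}, I would first observe that reversing the orientation of a merge or split saddle corresponds, by \eqref{eq:twisterorientation}, to pre- or post-composing with the twist, whose image under $\TopFunctor$ is the swap $\tau_{T^1,T^1}$. Since $\Delta$ is symmetric, the underlying equalities $\tau_{T^1,T^1} \circ \Delta = \Delta$ and $\Delta \circ \tau_{T^1,T^1} = \Delta$ hold as maps of oriented manifolds. The absence of a sign for the merge comes from the fact that $\Delta$ appears contravariantly (so that $\tau_{T^1,T^1}$ contributes only its pullback on cohomology, with no orientation flip), while the minus sign for the split arises because $\Delta$ is now covariant and the swap flips the orientation of the target, producing $\overline{\Delta} = -\Delta$ in $\linMDoubleTop$. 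The remaining relation between positive and negative death is immediate from the definition $\TopFunctor(\text{neg death}) = \overline{\TopFunctor(\text{pos death})}$ together with $\overline{\varphi} = -\varphi$.

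For \eqref{eq:chcobrelbirthmerge}, each relation asserts that a small composite of elementary cobordisms equals an identity (or its twisted variant). Applying $\TopFunctor$ produces a short zigzag in $\MDoubleTop$. For instance, the first relation produces the zigzag $T^1 \xleftarrow{1 \times \varepsilon} T^2 \xleftarrow{\Delta} T^1$, which collapses to the identity on $T^1$ since $(1 \times \varepsilon) \circ \Delta = 1_{T^1}$. The remaining relations mix a covariant with a contravariant leg; they can be verified either by recognizing a transverse-pullback square and invoking \eqref{eq:pushpullrel}, or by pushing signs across $\overline{\phantom{.}}$ to match the expected output.

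Finally, for the graded interchange laws \eqref{eq:cobcommute1} and \eqref{eq:cobcommute2}, I would note that the cobordism degree $\deg W = \#\text{splits} - \#\text{deaths}$ matches the cohomological degree under $\TopFunctor$: merges and births go to contravariant maps (degree $0$), splits to covariant maps of degree $+1$, and deaths to covariant maps of degree $-1$. Combined with the sign rule \eqref{eq:htimescov} built into $\htimes$, the graded interchange in $\linMDoubleTop$ produces precisely the factor $(-1)^{\deg W \deg W'}$ demanded by \eqref{eq:cobcommute1}--\eqref{eq:cobcommute2}. The main obstacle throughout the argument is maintaining a clean dictionary between signs introduced on the cobordism side (orientation reversal, graded exchange) and signs implemented on the manifold side (either by $\overline{\phantom{.}}$ on zigzags or by the twisted cartesian product convention); once this dictionary is pinned down, each relation reduces either to a topological identity between the underlying continuous maps or to a direct consequence of the twisted monoidal structure on $\linMDoubleTop$.
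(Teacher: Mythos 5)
Your proposal is correct and follows essentially the same line as the paper: check that $\TopFunctor$ respects each of the additional relations imposed on $\linchcobcat$, tracing orientation-reversal of saddles to the twist and the relation~\eqref{eq:pushpullrel} for~\eqref{eq:reverseorientation}, collapsing the short zigzags for~\eqref{eq:chcobrelbirthmerge}, and matching the cobordism degree $\#\text{splits}-\#\text{deaths}$ with the cohomological degree of the covariant maps so that the twisted cartesian product's sign rule reproduces $(-1)^{\deg W\deg W'}$ for~\eqref{eq:cobcommute1}--\eqref{eq:cobcommute2}. One small imprecision: for the reversed split you attribute the sign to the swap ``flipping the orientation of the target,'' whereas in $\linMDoubleTop$ the sign actually comes from the oriented pullback in~\eqref{eq:pushpullrel} being $(T^1)^{-1}$ (i.e.\ an orientation flip on the intermediate object, equivalently $\overline{\Delta}$, which reverses the \emph{source} of the covariant leg); the conclusion is the same but the mechanism as stated is off.
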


\begin{proof}
For this, it is enough to check that $\TopFunctor$ respects the relations Eq.~\eqref{eq:reverseorientation} - \eqref{eq:cobcommute2}.
For \cref{eq:reverseorientation}, we verify that
\[
T^2 \xleftarrow{\tau} T^2 \xleftarrow{\Delta} T^1 = T^2 \xleftarrow{\Delta} T^1,
\]
and
\[
T^1 \xleftarrow{1} (T^1)^{-1} \xrightarrow{\Delta} T^2
\overset{\eqref{eq:pushpullrel}}{=} T^1  \xrightarrow{\Delta} T^2 \xleftarrow{\tau} T^2.
\]
For \cref{eq:chcobrelbirthmerge}, we compute
\[
T^1 \cong T^0 \times T^1 \xleftarrow{\varepsilon \times 1} T^2 \xleftarrow{\Delta} T^1 =  T^1 \xleftarrow{1} T^1,
\]
and 
\begin{align*}
&\bigl(T^1 \xrightarrow{\Delta} T^2 \cong T^1 \times T^1 \xleftarrow{1 \htimes 1} (T^1)^{-1} \times T^1 \xrightarrow{\varepsilon \htimes 1} T^0 \times T^1 \cong T^1\bigr)
\\
\overset{\eqref{eq:htimescov}}{=}& \bigl(T^1 \xrightarrow{\Delta} T^2 \cong T^1 \times T^1 \xleftarrow{1 \times 1} (T^1)^{-1} \times T^1\xleftarrow{1} T^1 \times T^1  \xrightarrow{\varepsilon \times 1} T^0 \times T^1 \cong T^1 \bigr)
\\
=& \bigl(T^1 \xrightarrow{\Delta} T^2 \cong T^1 \times T^1 \xrightarrow{\varepsilon \times 1} T^0 \times T^1 \cong T^1 \bigr)
=\bigl( T^1 \xrightarrow{1} T^1\bigr),
\end{align*}
and the third relation is immediate by \cref{rem:redundantrelchcob}.
For \eqref{eq:cobcommute1} and \eqref{eq:cobcommute2}, we need to check all cases. We can suppose $W$ and $W'$ each contain only an elementary cobordism. 
Then, it follows by using the definition of the twisted cartesian product, and checking the orientations of the pullbacks involved using \cref{prop:comppushkunneth}. We leave the details to the reader. 
\end{proof}

\begin{thm}\label{thm:geomTQFT}
The diagram of graded symmetric monoidal functors 
\[
\begin{tikzcd}
\linchcobcat \ar{rr}{\linoddTQFT} \ar[swap]{dr}{\linTopFunctor} && \bZ\gmod \\
&\linMDoubleTop \ar[swap]{ur}{H^*}&
\end{tikzcd}
\]
is commutative. 
\end{thm}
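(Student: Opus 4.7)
My plan is to reduce the identity of functors to a check on a small set of generators, exploiting the fact that both sides are $\bZ$-graded symmetric monoidal functors. Concretely, since $\linchcobcat$ is presented as a $\bZ$-graded symmetric monoidal category by the elementary cobordisms (merge, split, birth, positive death, negative death) together with the twist $\tau$, subject to Eq.~\eqref{eq:reverseorientation}--\eqref{eq:cobcommute2} and the exchange/symmetry relations of \cref{sec:chcobcat}, and since both $\linoddTQFT$ and $H^{*}\circ\linTopFunctor$ are graded symmetric monoidal functors by construction, it suffices to verify that they agree on objects and on these generating morphisms.

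On objects, the fixed isomorphisms $f^{0}\colon H^{*}(T^{0})\xrightarrow{\simeq}\bZ$ and $f^{1}\colon H^{*}(T^{1})\xrightarrow{\simeq}A$ chosen in the paragraph preceding this theorem identify the image of $S^{1}$ under both functors. For general tensor powers, the K\"unneth isomorphism used to define $f^{m}$ is exactly the same as the coherence isomorphism of the graded symmetric monoidal structure on $H^{*}\circ\linTopFunctor$, so the identification is compatible with $\otimes$. On the twist, both functors give the graded symmetry $\tau_{A,A}$ by definition.

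On the remaining generators, I would simply compare: for the merge, the formula for $\linoddTQFT$ in \cref{ssec:oddTQFT} matches $\Delta^{*}$ as computed in \cref{lem:pullcompute}; for the birth, it matches $\varepsilon^{*}$, also by \cref{lem:pullcompute}; for the split, it matches $\Delta_{!}$ by \cref{lem:pushcomputeDelta}; for the positive death, it matches $\varepsilon_{!}$ by \cref{lem:pushcomputeEpsilon}; and for the negative death, it matches the composite $(1_{T^{1}})^{*}$ with $\varepsilon_{!}$ taken with respect to the reversed orientation of $(T^{1})^{-1}$, which by naturality of the pushforward under orientation reversal flips the sign of \cref{lem:pushcomputeEpsilon}, giving $v_{-}\mapsto -1$ in agreement with \cref{eq:reverseorientation}.

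The main obstacle is sign bookkeeping. The signs in the formulas of \cref{ssec:oddTQFT} (for instance the $-$ in $v_{+}\mapsto v_{-}\otimes v_{+}-v_{+}\otimes v_{-}$ for the split, and the $-1$ appearing in the positive death) are engineered to make the odd Frobenius structure work; they must match the signs that arise geometrically from Poincar\'e duality (which has degree $-|X|$), from the cap/cross product identity \eqref{eq:crosscap}, and from the twisted K\"unneth compatibility of \cref{prop:comppushkunneth}. The computations in \cref{lem:pushcomputeDelta} and \cref{lem:pushcomputeEpsilon} already package these signs, so the check is reduced to reading off the formulas; any remaining overall sign discrepancy on a generator is absorbed by the single conventional choice of sending the dual of $[f_{T^{1}}]$ to $v_{-}$ under $f^{1}$, which in turn propagates consistently through all $f^{m}$ via K\"unneth. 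Once the generators are matched and the relations Eq.~\eqref{eq:reverseorientation}--\eqref{eq:cobcommute2} are verified on both sides (which follows respectively from the sign laws built into $\oddTQFT$ and from the well-definedness of $\linTopFunctor$ proved just above the theorem), the diagram commutes on all of $\linchcobcat$.
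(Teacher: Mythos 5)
Your proof takes essentially the same route as the paper's own, which is a one-liner: both sides are $\bZ$-graded symmetric monoidal functors out of the presented category $\linchcobcat$, so it suffices to check agreement on $S^{1}$ and on each elementary generator, and that check is exactly the content of \cref{lem:pullcompute}, \cref{lem:pushcomputeDelta} and \cref{lem:pushcomputeEpsilon}. Your identification of which lemma handles which generator is correct, and the remark that the negative death picks up a sign via the orientation reversal $(T^{1})^{-1}$ is the right mechanism.

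One small caveat worth tightening: the sentence asserting that ``any remaining overall sign discrepancy on a generator is absorbed by the single conventional choice of $f^{1}$'' is not quite a valid move. Changing $f^{1}$ rescales \emph{all five} generators simultaneously in a correlated way, so it cannot independently absorb five separate sign discrepancies; moreover the orientation data in $\chcobcat$ (the arrows on splits, merges and deaths, together with \cref{eq:twisterorientation} and \cref{eq:reverseorientation}) is what pins down the sign of each generator, not the post hoc choice of $f^{1}$. The correct statement, which your proof in fact relies on, is that once the conventions of the paper are fixed (the orientations $[f_{T^{m}}]$, the isomorphism $f^{1}$ sending the dual of $[f_{T^{1}}]$ to $v_{-}$, and the assignment $\TopFunctor$ of a definite span to each \emph{oriented} elementary cobordism), the three lemmas already produce the exact same signed formulas as $\oddTQFT$, so there is no residual discrepancy to absorb. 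With that gloss the argument is sound and matches the paper's.
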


\begin{proof}
Since the functors are graded monoidal, we only need to verify that it coincides on $S^1$ and on each elementary cobordism. This follows from \cref{lem:pullcompute}, \cref{lem:pushcomputeDelta} and \cref{lem:pushcomputeEpsilon}. 
\end{proof}

We could have also considered the following variation of the theorem:
\begin{cor}\label{cor:geomTQFT}
The diagram of functors
\[
\begin{tikzcd}
\chcobcat \ar{rr}{\oddTQFT} \ar[swap]{dr}{\TopFunctor} && \bZ\gmod \\
&\DoubleTop \ar[swap]{ur}{H^*}&
\end{tikzcd}
\]
is commutative.
\end{cor}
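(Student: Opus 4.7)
The plan is to deduce this corollary directly from \cref{thm:geomTQFT} by using the fact that all the relevant functors factor through their $\bZ$-linear counterparts, and that the linearization procedure is natural with respect to all the structure in play.

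First, I would recall the commutative square of canonical linearization functors
\[
\begin{tikzcd}[column sep = 7ex]
\chcobcat \ar{d}{L_1} \ar{r}{\TopFunctor} & \DoubleTop \ar{d}{L_2}
\\
\linchcobcat\ar[swap]{r}{\linTopFunctor} & \linMDoubleTop
\end{tikzcd}
\]
which is already recorded in the paper just before \cref{thm:geomTQFT}. Next, I would observe that by construction (see \cref{ssec:oddTQFT} and \cref{sec:linchcobcat}) the TQFT factors as $\oddTQFT = \linoddTQFT \circ L_1$. Similarly, $H^* : \DoubleTop \to \bZ\gmod$ factors through $\linMDoubleTop$: indeed, for any zigzag $\varphi$ in $\DoubleTop$ the cohomological functor from \cref{def:doubletop} sends composition to composition of pullbacks and pushforwards, while the relation $\overline{\varphi} = -\varphi$ imposed to form $\linMDoubleTop$ is precisely what cohomology already satisfies (since reversing an orientation negates the pushforward). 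Thus $H^*|_{\DoubleTop} = H^*|_{\linMDoubleTop} \circ L_2$.

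Putting these factorizations together, the outer diagram of the corollary can be rewritten as
\[
\begin{tikzcd}[column sep = 7ex]
\chcobcat \ar{r}{L_1} \ar[swap]{d}{\TopFunctor} & \linchcobcat \ar{r}{\linoddTQFT} \ar{d}{\linTopFunctor} & \bZ\gmod \\
\DoubleTop \ar[swap]{r}{L_2} & \linMDoubleTop \ar[swap]{ur}{H^*} &
\end{tikzcd}
\]
The left-hand square commutes by the compatibility of $\TopFunctor$ and $\linTopFunctor$ already recorded, and the right-hand triangle is precisely \cref{thm:geomTQFT}. Chasing the outer rectangle gives $H^* \circ \TopFunctor = \oddTQFT$, which is exactly the assertion.

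There is no real obstacle here; the only thing to be careful about is the justification of the factorization $H^*|_{\DoubleTop} = H^* \circ L_2$, which amounts to checking that the two defining relations of $\linMDoubleTop$ (namely, the push/pull exchange along transverse squares and the identification $\overline{\varphi} = -\varphi$) are both automatically satisfied at the level of cohomology. The first is \eqref{eq:transpushpull}, which is built into the definition of $\MDoubleTop$; the second follows from the fact that reversing the orientation of a closed oriented manifold negates its fundamental class, hence negates Poincar\'e duality and thus the exceptional pushforward. With these remarks the corollary is an immediate consequence of \cref{thm:geomTQFT}.
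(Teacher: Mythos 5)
Your proof is correct and is essentially what the paper intends: the corollary is deduced by factoring $\oddTQFT$, $\TopFunctor$, and $H^*$ through the $\bZ$-linear/$\MDoubleTop$ quotients and invoking \cref{thm:geomTQFT} (the paper itself offers no separate proof, presenting the corollary as a ``variation'' of the theorem). One small point worth noting: the commutative square in the middle of your argument should have $\MDoubleTop$ (not $\DoubleTop$) in the upper-right corner, matching what is recorded just after the proof that $\TopFunctor$ is well-defined, since the well-definedness of $\TopFunctor$ on twist relations already uses \eqref{eq:pushpullrel}; the corollary's use of $\DoubleTop$ is a mild abuse inherited from the paper's own statement.
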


However, in this case, the categories $\DoubleTop$ and $\chcobcat$, and the functors $H^*$, $\TopFunctor$ and $\oddTQFT$ are not (graded) monoidal.

%
%


\section{Odd arc algebras}\label{sec:oddArcAlg}

We first define the generalized odd arc algebra $OH_k^{n-k}$ using cohomology of components of the real $(n-k,k)$-Springer fiber. The multiplication rule is defined by doing a sequence of surgeries on circle diagrams, which gives pullbacks and pushfowards between certain spaces. By \cref{thm:geomTQFT}, these maps coincide with the ones obtain using the 
odd TQFT $\oddTQFT$, which we recalled in \cref{ssec:oddTQFT}, and which, in a slightly
different formulation, was also used in  \cite{naissevaz18} to construct an algebra structure on $OH_n^n$.

Then, following \cite{stroppelwebster12,wilbert13}, we use this construction to define a convolution algebra structure on the cohomology of intersections of the components $T_\ba$ of $\oSpgrFib{n-k}{k}$ (equivalently, the components of $\spgrFib{n-k}{k}(\bR)$). 
We show it endows $OH^{n-k}_k$ with an algebra structure equal to the one obtained from $\oddTQFT$. 
Moreover, we explain how certain bimodules and morphisms between them, used in \cite{naisseputyra} to construct the tangle version of odd Khovanov homology, can also be obtained  from a sort of convolution product on some topological spaces, which should coincide with subspaces of Spaltenstein varieties (as in \cite{wilbert13}). 
Finally, we describe the odd analog $OK_k^{n-k}$ of the quasi-hereditary cover algebra $K_k^{n-k}$ from \cite{stroppel09,chenkhovanov06}.

\subsection{The odd arc algebra} \label{sec:oddArcPP}

As a $\bZ$-module, the $(n-k,k)$-\emph{odd arc algebra} is defined as
\begin{align*}
OH^{n-k}_k &:= \bigoplus_{\ba,\bb \in \crossingless{n-k}{k}} {_\bb}(OH^{n-k}_k)_{\ba}, &
 {_\bb}(OH^{n-k}_k)_{\ba} &:= H^*(\torus_{\bb ; n-k,k} \cap \torus_{\ba; n-k,k}).
\end{align*}
We write ${_\bb}1{_\ba}$ for the unit in $H^*(\torus_{\bb} \cap \torus_{\ba})$.

Note that we can describe elements in $ {_\bb}(OH^{n-k}_k)_{\ba}$ by dotted circle diagrams. In particular, ${_\bb}1{_\ba}$ is given by the diagram of shape $\obb\ba$ without any dots. 
\subsubsection{Quantum grading}

In~\cite{khovanov02} the arc algebra $H_n^n$ is $\bZ$-graded. This grading is called the \emph{quantum grading}, and is an important ingredient for Khovanov homology. 
The quantum graded version of the odd arc algebra, is given by taking twice the grading of $H^*(\torus_{\bb} \cap \torus_{\ba})$ (i.e. $\deg_q(X_i) = 2$) and define 
\[
{_\bb}(OH^{n-k}_k)_{\ba} :=  H^*(\torus_{\bb} \cap \torus_{\ba})\{k-|\obb\ba|\},
\]
where $\{m\}$ is a shift up by $m$ in the quantum grading. 
When we refer to the degree $|x|$ of an element $x \in  H^*(\torus_{\bb} \cap \torus_{\ba})$, we will refer to its homological degree and not the quantum grading. 

\subsubsection{Composition rule}\label{sec:OHcombmultrule}

For $\bc \neq \bb$, we let the multiplication map 
\[
{_\bd}(OH^{n-k}_k)_{\bc} \otimes {_\bb}(OH^{n-k}_k)_{\ba} \rightarrow 0 \in OH^{n-k}_k
\]
be zero. Otherwise, for the multiplication
\[
{_\bc}(OH^{n-k}_k)_{\bb} \otimes {_\bb}(OH^{n-k}_k)_{\ba} \rightarrow {_\bc}(OH^{n-k}_k)_{\ba},
\]
we consider the planar diagram $\obc \bb \obb \ba$, given by putting of $\obc \bb$ above the one of $\obb \ba$, where the rays in the middle part $\bb \obb$ are truncated and separated. For example we could have:
\begin{align*}
	\ba = \bb  &=  \  
	\tikzdiagcm{
		\cupdiag{1}{2}{1};
		\raydiag{0}{2};
	 }
&
	\bc  &=  \  
	\tikzdiagcm{
		\cupdiag{0}{1}{1};
		\raydiag{2}{2};
	 }
&
	\obc\bb\obb\ba &= \ 
	\tikzdiagtcm{
		\cupdiag[3]{0}{1}{-1};
		\raydiag[3]{2}{-2};
		\cupdiag[3]{1}{2}{1};
		\raydiag[3]{0}{2};
		\cupdiag[-2]{1}{2}{-1};
		\raydiag[-2]{0}{-2};
		\cupdiag[-2]{1}{2}{1};
		\raydiag[-2]{0}{2};
	 }
\end{align*}
In order to obtain an element in $ {_\bc}(OH^{n-k}_k)_{\ba}$, we want to turn $\obc \bb \obb \ba$ into a diagram of shape $\obc \ba$. This will be achieved by performing surgeries on the symmetric arcs and rays of $\bb \obb$, turning them into straight line segments connecting $\obc$ with $\ba$. This, in turn, defines continuous maps between certain spaces defined below. 
 The multiplication rule is then defined by applying the cohomology functor $H^*$ to this sequence of maps. 

\smallskip

For any planar diagram $I$ consisting of pairs of symmetric arcs, pairs of symmetric truncated rays, and vertical line segments, joining $n$ points on the bottom to $n$ points on the top, we write $(i,j) \in I$ if  there is an arc joining the $i$th point to the $j$th point, $(i) \in I$ if there is a ray at the $i$th point, and $[i] \in I$ if there is a line segment joining the $i$th bottom point to the top one. 
Note that for $\bb\obb$ we can never have $[i] \in \bb\obb$.

Then, we associate the following space to the diagram $I$:
\[
\torus_I :=\left\{(x_1, \dots, x_n, y_1, \dots, y_n) \in \torus^n \times \torus^n \left|
\begin{array}{ll}
 x_i = x_j \text{ and } y_i = y_j, &\text { if } (i,j) \in I, \\
 x_i = (-1)^i p = y_i, &\text{ if } (i) \in I, \\
 x_i = y_i, &\text{ if } [i] \in I,
\end{array}
\right.
\right\}
\]
 and set
\[
{_\bc}(\torus_I){_\ba} := \torus_I \cap (\torus_{\bc} \times \torus_\ba).
\]
One can see that ${_\bc}(\torus_I){_\ba}$ is either the empty set or homeomorphic to $\torus^{|\obc I \ba|}$, where $|\obc I \ba|$ is the number of circle components in the planar diagram $\obc I \ba$. In particular $|\obc \bb \obb \ba| = |\obc\bb| + |\obb\ba|$, and if $\id_n$ consists only of $n$ vertical line segments, then ${_\bc}(\torus_{\id_n}){_\ba} \cong \torus_\bc \cap \torus_\ba$. Therefore, we have
\begin{align*}
H^*({_\bc}(\torus_{\bb\obb}){_\ba}) &\cong {_\bc}(OH^{n-k}_k){_\bb} \otimes {_\bb}(OH^{n-k}_k){_\ba}, 
&
H^*({_\bc}(\torus_{\id_n}){_\ba})  &\cong  {_\bc}(OH^{n-k}_k){_\ba}.
\end{align*}

\begin{rem}
In fact, this procedure can be generalized for more general flat tangle diagrams, see \cref{sec:bimodules} below. 
\end{rem}

Given $I$ and  $(r,s) \in I$, we define $I'$ to be given by removing the pair of symmetric arcs joining the $r$th point to the $s$th one in $I$, and replacing them by two vertical segments $[r]$ and $[s]$:
\[
\tikz[thick, xscale=.75, baseline={([yshift=1ex]current bounding box.center)}]{
		\draw  (0,1.5) .. controls (0,1) and (1, 1) ..  (1,1.5) ;
		\draw  (0,0)  node[below]{$r$} .. controls (0,0.5) and (1, 0.5) ..  (1,0)  node[below]{$s$}  ;
	 }
\quad \mapsto \quad
\tikz[thick, xscale=.75, baseline={([yshift=1ex]current bounding box.center)}]{
		\draw  (0,1.5) -- (0,0)   node[below]{$r$} ;
		\draw  (1,1.5) -- (1,0)   node[below]{$s$};
	 }
\]
We call this operation an $(r,s)$-\emph{surgery}. 
Assume that neither  ${_\bc}(\torus_{I}){_\ba}$ nor ${_\bc}(\torus_{I'}){_\ba}$ are empty.
Then, we need to distinguish five different cases:
\begin{enumerate}
 \item The $(r, s)$-surgery merges two different circle components, e.g.
\begin{align*}
	\tikz[thick, xscale=.5,yscale=.75, baseline={([yshift=-.5ex]current bounding box.center)}]{
		\draw  (0,0) .. controls (0,0.5) and (1, 0.5) ..  (1,0) .. controls (1,-.5) and (0,-.5) .. (0,0);
		\draw[yshift=1cm]  (0,0) .. controls (0,0.5) and (1, 0.5) ..  (1,0) .. controls (1,-.5) and (0,-.5) .. (0,0);
	 }
\quad &\mapsto  \quad
	\tikz[thick, xscale=.5,yscale=.75, baseline={([yshift=-.5ex]current bounding box.center)}]{
		\draw  (0,1) .. controls (0,1.5) and (1, 1.5) ..  (1,1) -- (1,0) .. controls (1,-.5) and (0,-.5) .. (0,0) -- (0,1);
	 }
\end{align*}
Suppose that the $i$th and $j$th coordinates, with $i < j$, of $T^m$ correspond with the two circles involved in the $(r, s)$-surgery under the isomorphism ${_\bc}(\torus_{I'}){_\ba} \cong \torus^m$. 
Then, we fix an inclusion ${_\bc}(\torus_{I'}){_\ba} \hookrightarrow {_\bc}(\torus_{I}){_\ba}$ using the diagonal map 
which sends
\[
(x_1, \dots, x_m) \in  \torus^{m} \mapsto (x_1,\dots, x_{j-1}, x_i, x_j, \dots, x_m) \in  \torus^{m+1} \cong {_\bc}(\torus_{I}){_\ba}.
\]
\item The $(r, s)$-surgery  splits a circle into two different circle components, e.g.
\begin{align*}
	\tikz[thick, xscale=.5,yscale=.75, baseline={([yshift=-.5ex]current bounding box.center)}]{
		\draw  (0,1) .. controls (0,1.5) and (1, 1.5) .. 
			 (1,1).. controls (1,.5) and (2, .5) .. 
			 (2,1).. controls (2,1.5) and (3, 1.5) .. 
			 (3,1) --
			 (3,0) .. controls (3,-.5) and (2,-.5) ..
			 (2,0) .. controls (2,.5) and (1,.5) ..
			 (1,0) .. controls (1,-.5) and (0,-.5) ..
			 (0,0) -- (0,1);
	 }
\quad &\mapsto  \quad
	\tikz[thick, xscale=.5,yscale=.75, baseline={([yshift=-.5ex]current bounding box.center)}]{
		\draw  (0,1) .. controls (0,1.5) and (1, 1.5) ..
			(1,1) -- 
			(1,0) .. controls (1,-.5) and (0,-.5) .. 
			(0,0) -- (0,1);
		\draw[xshift=2cm]  (0,1) .. controls (0,1.5) and (1, 1.5) ..
			(1,1) -- 
			(1,0) .. controls (1,-.5) and (0,-.5) .. 
			(0,0) -- (0,1);
	 }
\end{align*}
and then we fix an inclusion ${_\bc}(\torus_{I}){_\ba} \hookrightarrow {_\bc}(\torus_{I'}){_\ba}$, also by a diagonal map.
\item The $(r, s)$-surgery  spawns a circle component from a ray,  e.g.
\[
	\tikz[thick, xscale=.5,yscale=.75, baseline={([yshift=-.5ex]current bounding box.center)}]{
		\draw (0,1.75) --
			(0,1) .. controls (0,.5) and (1,.5) ..
			(1,1) .. controls (1,1.5) and (2,1.5) ..
			(2,1) --
			(2,0) .. controls (2,-.5) and (1,-.5) ..
			(1,0) .. controls (1,.5) and (0,.5) ..
			(0,0) -- 
			(0,-.75); 
	 }
\quad \mapsto  \quad
	\tikz[thick, xscale=.5,yscale=.75, baseline={([yshift=-.5ex]current bounding box.center)}]{
		\draw (0,1.75) -- 
			(0,-.75); 
		\draw (1,1) .. controls (1,1.5) and (2,1.5) ..
			(2,1) --
			(2,0) .. controls (2,-.5) and (1,-.5) ..
			(1,0) -- (1,1);
	 }
\]
 and we fix an inclusion ${_\bc}(\torus_{I}){_\ba} \hookrightarrow {_\bc}(\torus_{I'}){_\ba}$ by the map
\[
1 \times \eta \times 1  : {_\bc}(\torus_{I}){_\ba} \cong \torus^{m} \rightarrow \torus^{m+1} \cong {_\bc}(\torus_{I'}){_\ba},
\]
where $\eta$ acts on the coordinates corresponding with the spawned circle component.
\item The $(r, s)$-surgery  merges a circle component into a ray,   e.g.
\[
	\tikz[thick, xscale=.5,yscale=.75, baseline={([yshift=-.5ex]current bounding box.center)}]{
		\draw  (0,0) .. controls (0,0.5) and (1, 0.5) .. 
			 (1,0) .. controls (1,-.5) and (0,-.5) .. (0,0);
		\draw (0,1.75) --
			(0,1) .. controls (0,.5) and (1,.5) ..
			(1,1) --
			(1,1.75);
	 }
\quad \mapsto  \quad
	\tikz[thick, xscale=.5,yscale=.75, baseline={([yshift=-.5ex]current bounding box.center)}]{
		\draw (0,1.75) --
			(0,0) .. controls (0,-.5) and (1,-.5) ..
			(1,0) --
			(1,1.75);
	 }
\]
 and then we fix ${_\bc}(\torus_{I'}){_\ba} \hookrightarrow {_\bc}(\torus_{I}){_\ba}$ by $1 \times \eta \times 1$.
\item The $(r, s)$-surgery  cuts and reconnects two rays,  e.g.
\begin{align*}
	\tikz[thick, xscale=.5,yscale=.75, baseline={([yshift=-.5ex]current bounding box.center)}]{
		\draw (0,1.75) --
			(0,1) .. controls (0,.5) and (1,.5) ..
			(1,1) .. controls (1,1.5) and (2,1.5) ..
			(2,1) --
			(2,.75) (2,.25) --
			(2,0) .. controls (2,-.5) and (1,-.5) ..
			(1,0) .. controls (1,.5) and (0,.5) ..
			(0,0) -- 
			(0,-.75); 
	 }
\  &\mapsto  \ 
	\tikz[thick, xscale=.5,yscale=.75, baseline={([yshift=-.5ex]current bounding box.center)}]{
		\draw (0,1.75) -- 
			(0,-.75); 
		\draw (1,1) .. controls (1,1.5) and (2,1.5) ..
			(2,1) --
			(2,.75) (2,.25) --
			(2,0) .. controls (2,-.5) and (1,-.5) ..
			(1,0) -- (1,1);
	 }
&\text{or}&&
	\tikz[thick, xscale=.5,yscale=.75, baseline={([yshift=-.5ex]current bounding box.center)}]{
		\draw (0,1.75) --
			(0,0) .. controls (0,-.5) and (1,-.5) ..
			(1,0) .. controls (1,.5) and (2,.5) ..
			(2,0) --
			(2,-.75);
		\draw (1,1.75) --
			(1,1) .. controls (1,.5) and (2,.5) ..
			(2,1) .. controls (2,1.5) and (3,1.5) ..
			(3,1) --
			(3,-.75);
	 }
\  &\mapsto  \ 
	\tikz[thick, xscale=.5,yscale=.75, baseline={([yshift=-.5ex]current bounding box.center)}]{
		\draw (0,1.75) --
			(0,0) .. controls (0,-.5) and (1,-.5) ..
			(1,0)  -- 
			(1,1.75);
		\draw (2,-.75) --
			(2,1) .. controls (2,1.5) and (3,1.5) ..
			(3,1) --
			(3,-.75);
	 }
&\text{or}&&
	\tikz[thick, xscale=.5,yscale=.75, baseline={([yshift=-.5ex]current bounding box.center)}]{
		\draw (0,1.75) --
			(0,1) .. controls (0,.5) and (1,.5) ..
			(1,1) --
			(1,1.75);
		\draw (0,-.75) --
			(0,0) .. controls (0,.5) and (1,.5) ..
			(1,0) --
			(1,-.75);
	 }
\  &\mapsto  \ 
	\tikz[thick, xscale=.5,yscale=.75, baseline={([yshift=-.5ex]current bounding box.center)}]{
		\draw (0,-.75) -- (0,1.75);
		\draw (1,-.75) -- (1,1.75);
	 }
\end{align*}
and then ${_\bc}(\torus_{I}){_\ba}  \cong {_\bc}(\torus_{I'}){_\ba}$, or at least one of ${_\bc}(\torus_{I}){_\ba}$ or ${_\bc}(\torus_{I'}){_\ba}$ is empty.
\end{enumerate}

Similarly, the $(r)$-\emph{surgery} is given by removing two symmetric truncated rays $(r)$, and putting a line segment $[r]$ instead:
\[
\tikz[thick, xscale=.75, baseline={([yshift=1ex]current bounding box.center)}]{
		\draw  (0,0)  node[below]{$r$}  -- (0,.5) ;
		\draw  (0,1)  -- (0,1.5) ;
	 }
\quad \mapsto \quad
\tikz[thick, xscale=.75, baseline={([yshift=1ex]current bounding box.center)}]{
		\draw  (0,1.5) -- (0,0)   node[below]{$r$} ;
	 }
\]
Then, two things can happen:
\begin{enumerate}
\item The $(r)$-surgery connects two different rays together, and thus ${_\bc}(\torus_{I}){_\ba} \cong {_\bc}(\torus_{I'}){_\ba}$.
\item The $(r)$-surgery closes a ray into a circle component
\[
	\tikz[thick, xscale=.5,yscale=.75, baseline={([yshift=-.5ex]current bounding box.center)}]{
		\draw (1,1) .. controls (1,1.5) and (2,1.5) ..
			(2,1) --
			(2,.75) (2,.25) --
			(2,0) .. controls (2,-.5) and (1,-.5) ..
			(1,0) -- (1,1);
	}
\quad \mapsto  \quad
	\tikz[thick, xscale=.5,yscale=.75, baseline={([yshift=-.5ex]current bounding box.center)}]{
		\draw (2,1) .. controls (2,1.5) and (1,1.5) ..
			(1,1) -- 
			(1,0) .. controls (1,-.5) and (2,-.5) ..
			(2,0) -- (2,1);
	}
\]
 and we fix ${_\bc}(\torus_{I}){_\ba} \hookrightarrow {_\bc}(\torus_{I'}){_\ba}$ by the map $1 \times \eta \times 1$.
\end{enumerate}

We fix an arbritrary order on the components of $\bb$ (i.e. on the rays and arcs). 
Then, following this order, we apply surgeries on $\obc \bb \obb \ba$.
 This gives a sequence of surgeries 
 \[
 \obc \bb \obb \ba = \obc I_0 \ba \rightarrow \obc I_1 \ba \rightarrow \dots \rightarrow  \obc I_{n-k} \ba = \obc \id_n \ba = \obc \ba,
 \]
 which in turns gives a sequence of inclusions, with each time either ${_\bc} (\torus_{I_\ell}) {_\ba} \hookrightarrow {_\bc} (\torus_{I_{\ell+1}}) {_\ba}$ or ${_\bc} (\torus_{I_{\ell+1}}) {_\ba} \hookrightarrow {_\bc} (\torus_{I_{\ell}}) {_\ba}$ by a fixed inclusion defined above. 
 
Viewing $(T_\bc \cap T_\bb) \htimes (T_{\bb} \cap T_{\ba})   \cong {_\bc} (\torus_{I_0}) {_\ba}$  and $ {_\bc} (\torus_{I_{n-k}}) {_\ba} \cong (T_{\bc} \cap T_\ba)$ as objects in $\DoubleTop$, this chain defines a map 
\[
\mu_{\bc,\bb,\ba} : (T_\bc \cap T_\bb) \htimes (T_{\bb} \cap T_{\ba}) \rightarrow ( T_{\bc} \cap T_\ba),
\]
 in $\DoubleTop$,  after choosing arbitrary orientations for all the involved spaces. 

\smallskip
 
We let the multiplication in $OH_k^{n-k}$ be given by
\[
\begin{tikzcd}[column sep = 12ex]
{_\bc}(OH^{n-k}_k){_\bb}  \otimes {_\bb}(OH^{n-k}_k){_\ba} \ar{r}
 \ar[equals]{d}
 &  {_\bc}(OH^{n-k}_k){_\ba}
\\
H^* (T_\bc \cap T_\bb) \otimes H^*(T_{\bb} \cap T_{\ba})
\ar{r}{H^*(\mu_{\bc,\bb,\ba})}
&
H^*(T_\bc \cap T_\ba).
 \ar[equals]{u}
\end{tikzcd}
\]
Note that this multiplication law depends on a lot of choices (order for the surgeries and choices of orientations).
Also, there is a priori no reason for it to be associative or preserve the quantum grading. 

\begin{lem}\label{lem:OHnnsame}
The abelian group $OH_n^n$ equipped with the multiplication defined above coincides with the definition of the odd arc algebra in~\cite{naissevaz18},  with the choice of orientation for the splitting cobordism given by the choice of fundamental classes. 
Moreover, in general $OH^{n-k}_k$ preserves the quantum grading, and agrees modulo 2 with the usual generalized arc algebra $H^{n-k}_k$ from \cite{stroppel09,  chenkhovanov06}. 
\end{lem}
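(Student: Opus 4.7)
The plan is to interpret the sequence of inclusion maps defining $\mu_{\bc,\bb,\ba}$ as the image under $\TopFunctor$ of a chronological $2$-cobordism $W_{\bc,\bb,\ba}$ in $\chcobcat$, obtained by translating each surgery into the appropriate elementary cobordism (merge, split, birth or death) acting on the arcs and rays of $\bb\obb$. Under this identification, \cref{cor:geomTQFT} immediately gives
\[
H^*(\mu_{\bc,\bb,\ba}) = \oddTQFT(W_{\bc,\bb,\ba}),
\]
which is precisely the shape of the multiplication rule on $OH^n_n$ used in \cite{naissevaz18}.

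For the first claim, I specialize to $(n-k,k)=(n,n)$ where every crossingless matching has no rays, so only surgery types $(1)$ and $(2)$ occur. A type $(1)$ $(r,s)$-surgery uses the diagonal inclusion ${_\bc}(\torus_{I'}){_\ba} \hookrightarrow {_\bc}(\torus_{I}){_\ba}$, which is $\TopFunctor$ applied to a merge cobordism (the contravariant map $T^2 \xleftarrow{\Delta} T^1$ inserted at the right position). A type $(2)$ surgery uses the diagonal inclusion in the opposite direction, which is $\TopFunctor$ applied to a split (the covariant map $T^1 \xrightarrow{\Delta} T^2$). Chaining the surgeries in the fixed order yields $\TopFunctor(W_{\bc,\bb,\ba})$ for a chronological cobordism $W_{\bc,\bb,\ba}$ matching the one used in \cite{naissevaz18}. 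The only remaining flexibility is the orientation of each splitting cobordism; by \cref{lem:pushcomputeDelta} this orientation is determined by the choice of fundamental class on the $T^1$ factor associated to the new arc, which is precisely how the sign in the splitting formula of $\oddTQFT$ arises.

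For the second claim, $\linoddTQFT$ is graded symmetric monoidal and the shift $\{k - |\obc I_\ell \ba|\}$ is designed so that each elementary step preserves the quantum grading. Using the degrees $|\Delta_!| = 1$, $|\varepsilon_!| = -1$, $|\eta_!| = 1$, and $|\Delta^*| = |\varepsilon^*| = |\eta^*| = 0$, one checks case-by-case that the change in cohomological degree at each surgery is exactly compensated by the change in the shift $\{k - |\obc I_\ell \ba|\}$. Cases $(1)$ and $(2)$ are exactly as in \cite{naissevaz18}, while cases $(3)$--$(5)$ involving rays are handled by the additional maps $\eta$ and $\varepsilon$, whose cohomological degrees account for the ray/circle conversions.

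For the third claim, tensoring with $\bZ/2\bZ$ erases all signs: the orientation-dependent sign in the pushforward $\Delta_!$, the signs coming from the relations \eqref{eq:reverseorientation}--\eqref{eq:cobcommute2} in $\linchcobcat$, and the signs in the defining formulas of $\oddTQFT$ all become trivial. What remains is the convolution-algebra realization of the even generalized arc algebra $H^{n-k}_k$ obtained in \cite{stroppelwebster12,wilbert13}, so $OH^{n-k}_k \otimes \bZ/2\bZ \cong H^{n-k}_k \otimes \bZ/2\bZ$. The main obstacle throughout is the bookkeeping of orientations on the intermediate spaces ${_\bc}(\torus_{I_\ell}){_\ba}$, particularly choosing fundamental classes consistently so that the sign appearing in \cref{lem:pushcomputeDelta} matches the sign in the split formula of $\oddTQFT$; modulo 2 this issue disappears, which is why the third claim is easier than the first.
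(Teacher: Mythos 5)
Your overall strategy is the one the paper intends---the paper's proof is the single sentence ``This follows immediately from \cref{thm:geomTQFT}''---and your first paragraph correctly identifies the surgery chain as $\TopFunctor(W_{\bc,\bb,\ba})$ for a chronological cobordism $W_{\bc,\bb,\ba}$, so that $H^*(\mu_{\bc,\bb,\ba})=\oddTQFT(W_{\bc,\bb,\ba})$; that is exactly how the first and third claims are meant to follow. (For the sign-sensitive comparisons you should appeal to \cref{thm:geomTQFT} rather than \cref{cor:geomTQFT}: identifying ${_\bc}(\torus_{I_\ell}){_\ba}$ with tensor powers of $A$ passes through the K\"unneth isomorphism and the twisted cartesian product $\htimes$, i.e.\ the graded monoidal statement, not the unlinearised corollary.)

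The quantum-grading paragraph, however, is wrong as written. The source of the multiplication carries the shift $\{(k-|\obc\bb|)+(k-|\obb\ba|)\}=\{2k-|\obc I_0\ba|\}$, not $\{k-|\obc I_0\ba|\}$, so your proposed intermediate shift $\{k-|\obc I_\ell\ba|\}$ does not even agree with the actual shift at $\ell=0$. And with that shift the elementary steps do \emph{not} preserve the quantum degree: a merge has cohomological degree $0$ while your shift increases by $1$ (since $|\obc I_{\ell+1}\ba|=|\obc I_\ell\ba|-1$), so the quantum degree jumps by $+1$ at every merge. The statement one actually has to check is the global identity $2\sum_\ell d_\ell = k-|\obc I_0\ba|+|\obc I_{n-k}\ba|$, where $d_\ell$ is the cohomological degree of the $\ell$-th surgery; you list the degrees of the elementary maps correctly, each contribution $2d_\ell-\delta_\ell$ is $0$ or $1$, but you give no reason why exactly $k$ contributions are nonzero. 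The cleanest repair is to note that the quantum degrees of the odd TQFT maps coincide with those of the even TQFT, so the grading statement is literally the even-case computation from \cite{stroppel09,chenkhovanov06}. Two small further slips: only $\eta$ (not $\varepsilon$) appears in the ray-type surgeries (3), (4) and ray-case (2); and for the mod $2$ claim the direct comparison is of the TQFT structure constants of $\oddTQFT$ and $\TQFT$, rather than via the Stroppel--Webster convolution model over $\bC$.
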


\begin{proof}
This follows immediately from \cref{thm:geomTQFT}.
\end{proof}

This also means that the choice of order for doing the surgeries influences the definition of the multiplication rule only up to a global sign, by the results in~\cite{naissevaz18} and \cite{putyra14} (which generalize easily to the $(n-k,k)$-case). 

\smallskip

Again, in contrast to the even case~\cite{stroppelwebster12,wilbert13}, there is no reason to choose one sign convention over another. Hence, in the same spirit as~\cite{naissevaz18}, we consider the whole collection of odd arc algebras, indexed by all choices of signs. Concretely, there are two possible choices for each triple $(\bc, \bb, \ba)$. 
In all cases the construction yields a non-associative (quantum grading preserving) multiplication rule, which becomes graded associative (also called quasi-associative) when working in a certain monoidal category of graded spaces, as in~\cite{naissevaz18,naisseputyra} (this can easily be seen from the perspective of platforms from \cite{chenkhovanov06} for the $(n-k,k)$-case).

\subsubsection{Convolution algebra}
The composition rule of \cref{sec:OHcombmultrule} can also be understood as a certain convolution product using ideas from Stroppel--Webster \cite{stroppelwebster12}.

\smallskip

For $\bc, \bb, \ba \in \crossingless{n-k}{k}$, the inclusions of $\torus_{\bc}  \cap \torus_{\bb} \cap \torus_{\ba}$ in the three different spaces $\torus_{\bb} \cap \torus_{\ba}$, $\torus_{\bc} \cap \torus_{\bb}$ and $\torus_{\bc} \cap \torus_{\ba}$ 
 induce morphisms on the cohomology:
\[
\begin{tikzcd}
H^*(\torus_{\bb} \cap \torus_{\ba}) \arrow[rd,"\imath_{\bb\ba}^*"]&& \\
& H^*(\torus_{\bc}  \cap \torus_{\bb} \cap \torus_{\ba}) \arrow[r,"(\imath_{\bc\ba})_!"] & H^*(\torus_{\bc}\cap \torus_{\ba}).\\
H^*(\torus_{\bc} \cap \torus_{\bb}) \arrow[ru,"\imath_{\bc\bb}^*"']&&
\end{tikzcd}
\]
Then, for any $f_{\bc,\bb,\ba} \in H^*(\torus_{\bc}  \cap \torus_{\bb} \cap \torus_{\ba})$, we define a composition law $\star_{f_{\bc,\bb,\ba}}$ by
\[
\begin{tikzcd}
 {_\bc}(OH^{n-k}_k)_{\bb} \otimes {_\bb}(OH^{n-k}_k)_{\ba} 
\ar[equals]{d}
 \arrow[r,"\star_{f_{\bc,\bb,\ba}}"] &  {_\bc}(OH^{n-k}_k)_{\ba} \\
H^*(\torus_{\bc} \cap \torus_{\bb}) \otimes H^*(T_{\bb} \cap \torus_{\ba}) \arrow[r] & H^*(\torus_{\bc} \cap \torus_{\ba})
\ar[equals]{u}
 ,
\end{tikzcd}
\]
where the arrow below is given by
\[
(\imath_{\bc\ba})_!(f_{\bc,\bb,\ba} \smile \imath_{\bc\bb}^* \smile \imath_{\bb\ba}^*).
\]
Then, for $f = \{ f_{\bc,\bb,\ba} | {\bc,\bb,\ba \in B_k^{n-k}} \}$ we put $\star_{f} := \sum_{\bc,\bb,\ba} \star_{f_{\bc,\bb,\ba}}$.

\begin{thm}\label{thm:convOHn}
There exists an $f = \{ f_{\bc,\bb,\ba} \in  H^*(\torus_{\bc}  \cap \torus_{\bb} \cap \torus_{\ba}) | {\bc,\bb,\ba \in B_k^{n-k}} \}$ such that the composition law $\star_f$ coincides with the one defined in \cref{sec:OHcombmultrule}.
\end{thm}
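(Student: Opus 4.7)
The plan is to follow the Stroppel--Webster strategy used in the even case \cite{stroppelwebster12, wilbert13}, where the convolution with the trivial class already recovers the arc algebra multiplication. The odd version will require tracking signs arising from the K\"unneth formula (\cref{prop:comppushkunneth}) and the exchange relations in $\linchcobcat$; these signs will be absorbed into the class $f_{\bc,\bb,\ba}$, which we expect to lie in $H^0(T_\bc \cap T_\bb \cap T_\ba) \cong \bZ$, i.e., to equal $\pm 1$ in all cases.

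The first step is to rewrite the surgery-based zigzag $\mu_{\bc,\bb,\ba}$ so that it factors through the triple intersection $T_\bc \cap T_\bb \cap T_\ba$, with the final arrow being the pushforward along the natural inclusion $\imath_{\bc\ba}: T_\bc \cap T_\bb \cap T_\ba \hookrightarrow T_\bc \cap T_\ba$. At each surgery step, the intermediate space ${}_\bc(T_{I_\ell}){}_\ba$ contains the triple intersection as a closed submanifold, and the corresponding inclusion or projection decomposes into a component transverse to the triple (contributing signs via \cref{prop:comppushkunneth}) and a component along the triple. Using \eqref{eq:pushpullrel} to commute pullbacks and pushforwards through transverse squares, the entire zigzag can be rearranged into a map $(T_\bc \cap T_\bb) \htimes (T_\bb \cap T_\ba) \to T_\bc \cap T_\bb \cap T_\ba$ followed by $(\imath_{\bc\ba})_!$.

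The induced map to $H^*(T_\bc \cap T_\bb \cap T_\ba)$ is then compared with the diagonal pullback $\imath_{\bc\bb}^* \cup \imath_{\bb\ba}^*$ appearing in the convolution $\star_1$. Both arise from zigzags that, by \cref{thm:geomTQFT}, correspond to chronological cobordisms with the same underlying topological type, determined by the combinatorics of $\obc\bb\obb\ba$. Hence by the structure of $\linchcobcat$ they differ only by a sign, and this sign defines $f_{\bc,\bb,\ba}$ via the projection formula for pushforwards, $\imath_!(\imath^*(y) \cup x) = y \cup \imath_!(x)$.

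The main technical challenge will be to verify that the signs accumulated through these compositions organize consistently into a single class $f_{\bc,\bb,\ba}$, independently of the chosen order of surgeries. This will require a case analysis across the five surgery types listed in \cref{sec:OHcombmultrule} and a careful check of the orientation conventions on the transverse pullbacks appearing at each stage. The independence from the order of surgeries is handled by the remark following \cref{lem:OHnnsame}, where this independence is already known to hold up to a global sign, consistent with our class being valued in $\pm 1$.
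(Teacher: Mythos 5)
Your plan shares the paper's broad strategy---rewrite the surgery zigzag so that it factors through $T_\bc\cap T_\bb\cap T_\ba$---but the key reordering step reaches for the wrong tool. To move a pushforward past a pullback, i.e., to convert $\psi^*\circ\varphi_!$ for inclusions $X\hookrightarrow Z\hookleftarrow Y$ into a pullback followed by a pushforward through $X\cap Y$, you invoke the transverse base-change relation \eqref{eq:pushpullrel}. The paper instead applies the clean-intersection formula
\[
\psi^*\circ\varphi_! = (\imath_X)_!\bigl(e\smile\imath_Y^*\bigr),
\]
where $e$ is the Euler class of the excess bundle of the (clean, but possibly non-transverse) intersection $X\cap Y\subset Z$; it is the accumulated product of these Euler classes, obtained by recursively applying this identity until all pullbacks precede all pushforwards, that constitutes $f_{\bc,\bb,\ba}$. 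Transversality is just the special case $e=1$. You assume transversality at every step without justification, and nothing in the construction guarantees it---this is precisely why the theorem is phrased with $f_{\bc,\bb,\ba}\in H^*(T_\bc\cap T_\bb\cap T_\ba)$ rather than with a sign. Your expectation that $f_{\bc,\bb,\ba}=\pm1$ in all cases is therefore a strictly stronger, unproved claim; indeed, if an excess bundle of positive rank ever appears it is a trivial bundle over a torus and its Euler class vanishes, so $f_{\bc,\bb,\ba}$ could be $0$ rather than $\pm1$.

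The comparison step in your third paragraph also does not go through as stated. You argue that the rewritten surgery zigzag and the convolution correspondence ``correspond to chronological cobordisms with the same underlying topological type'' and hence agree up to a sign by the structure of $\linchcobcat$. But the span $T_\bc\cap T_\bb \times T_\bb\cap T_\ba \leftarrow T_\bc\cap T_\bb\cap T_\ba \rightarrow T_\bc\cap T_\ba$ is not in the image of $\TopFunctor$, so $\linchcobcat$ gives you nothing to compare it against. In the paper the coincidence is a direct consequence of the recursive base change just described, and \cref{thm:geomTQFT} is invoked only afterwards, to identify $H^*(\mu_{\bc,\bb,\ba})$ with the multiplication defined via $\oddTQFT$.
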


\begin{proof}
As observed in~\cite[Theorem~35]{stroppelwebster12}, the sequence of inclusions
${_\bc} (\torus_{I_\ell}) {_\ba} \hookrightarrow {_\bc} (\torus_{I_{\ell+1}}) {_\ba}$ or ${_\bc} (\torus_{I_{\ell+1}}) {_\ba} \hookrightarrow {_\bc} (\torus_{I_{\ell}}) {_\ba}$ 
 of \cref{sec:OHcombmultrule} can be used to construct an element $f_{\bc,\bb,\ba}$. 
Indeed, given closed manifolds with inclusions $X \xhookrightarrow{\varphi} Z \xhookleftarrow{\psi} Y$, if $X$ and $Y$ intersect cleanly, then the commutative diagram
\[
\begin{tikzcd}[column sep= 1ex]
&Z&
\\
X \ar[hookrightarrow]{ur}{\varphi}&&Y \ar[hookrightarrow,swap]{ul}{\psi}
\\
&X \cap Y \ar[hookrightarrow]{ul}{\imath_X} \ar[hookrightarrow,swap]{ur}{\imath_Y} &
\end{tikzcd}
\]
gives 
\[
\psi^* \circ \varphi_! = (\imath_X)_! \bigl( e \smile \imath_Y^* \bigr),
\]
where $e$ is the Euler class of the excess bundle of the intersection  (see \cite{stroppelwebster12}, in the same spirit as in~\cite[Proposition 2.6.47]{chrissginzburg97}). 
Applying this recursively such that we do first all pullbacks and then all pushforwards yields an element $f_{\bc,\bb,\ba} \in H^*(\torus_{\bc}  \cap \torus_{\bb} \cap \torus_{\ba})$ such that $\star_{f_{\bc,\bb,\ba}}$ coincides with the multiplication rule of \cref{sec:OHcombmultrule}. 
Then, the result follows from \cref{thm:geomTQFT}. 
\end{proof}

Note that the choice of sign can be encoded in the choice $\pm f_{\bc,\bb,\ba}$, after having fixed an orientation for all the spaces $T_\bb \cap T_\ba$ and $T_\bc \cap T_\bb \cap T_\ba$. 

\subsubsection{Odd center}

Following~\cite{naissevaz18}, we define the  \emph{odd center} $OZ(OH^{n-k}_{k}) $ of $OH^{n-k}_{k}$ as 
\[
OZ(OH^{n-k}_{k}) := \{ z \in OH^{n-k}_{k} | zx = (-1)^{|z||x|} xz \text{ for all } x \in OH^{n-k}_{k} \}.
\]

\begin{rem}
The homological grading does not give $OH^{n-k}_k$ the structure of a superalgebra since the multiplication rule does not preserve it. However, it becomes a genuine grading when using the framework of \cite{naisseputyra}. Also, the odd center becomes the center in the general setting of algebra objects in a monoidal category. 
\end{rem}

\begin{prop}
We have $OZ(OH^{n-k}_{k})  \subset \bigoplus_{\ba}  {_\ba}(OH^{n-k}_{k}){_\ba}$. 
\end{prop}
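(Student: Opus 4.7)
The plan is to exploit the system of local units ${_\bc}1{_\bc} \in {_\bc}(OH^{n-k}_k){_\bc}$, together with the fact that each ${_\bc}1{_\bc}$ sits in homological degree $0$, so that the defining supercommutativity relation reduces to ordinary commutativity when tested against these elements.

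The first step is to verify the standard ``local identity'' property: for every $z_{\bb,\ba} \in {_\bb}(OH^{n-k}_k){_\ba}$ one has
\[
z_{\bb,\ba} \star_f {_\bc}1{_\bc} = \delta_{\ba,\bc}\, z_{\bb,\ba}, \qquad {_\bc}1{_\bc} \star_f z_{\bb,\ba} = \delta_{\bb,\bc}\, z_{\bb,\ba}.
\]
The vanishing for mismatched middle indices is immediate from the multiplication rule of \cref{sec:OHcombmultrule}, which declares ${_\bd}(OH^{n-k}_k){_\bc}\otimes {_\bb}(OH^{n-k}_k){_\ba}\to 0$ whenever $\bc\neq \bb$. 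For the matching case, ${_\bc}1{_\bc}$ is represented by the dot-free circle diagram of shape $\obc\bc$, and the surgery sequence applied to $z_{\bb,\ba}\cdot {_\ba}1{_\ba}$ only involves surgeries of the trivial ``reconnect two rays'' or ``merge/split along a degenerate component'' types, each of which induces a homeomorphism ${_\bb}(\torus_{I_\ell}){_\ba}\cong {_\bb}(\torus_{I_{\ell+1}}){_\ba}$ whose induced map in cohomology is the identity. Equivalently, this is the image of the standard identity relation under $\oddTQFT$ via \cref{thm:geomTQFT}.

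Next, take any homogeneous $z\in OZ(OH^{n-k}_k)$ and decompose
\[
z = \sum_{\bb,\ba \in B^{n-k}_k} z_{\bb,\ba}, \qquad z_{\bb,\ba} \in {_\bb}(OH^{n-k}_k){_\ba}.
\]
Since $|{_\bc}1{_\bc}| = 0$, the supercommutation relation $z\cdot {_\bc}1{_\bc} = (-1)^{|z|\cdot 0}\, {_\bc}1{_\bc}\cdot z$ simplifies to $z\cdot {_\bc}1{_\bc} = {_\bc}1{_\bc}\cdot z$. Applying the local identity formulas from the previous step,
\[
\sum_{\bb} z_{\bb,\bc} = \sum_{\ba} z_{\bc,\ba}
\]
for every $\bc\in B^{n-k}_k$. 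Now project this equation onto each summand ${_\bb}(OH^{n-k}_k){_\ba}$ of the direct-sum decomposition: the left-hand side only contributes in positions $(\bb,\bc)$ and the right-hand side only in positions $(\bc,\ba)$, so for any pair $(\bb,\ba)$ with $\bb\neq\ba$ we can choose $\bc$ to equal exactly one of the two, forcing $z_{\bb,\bc}=0$ for $\bb\neq\bc$ and $z_{\bc,\ba}=0$ for $\ba\neq\bc$. Letting $\bc$ range over all of $B^{n-k}_k$ yields $z_{\bb,\ba}=0$ whenever $\bb\neq\ba$, and hence $z\in \bigoplus_\ba {_\ba}(OH^{n-k}_k){_\ba}$.

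The main (mild) obstacle is the first step: even though the arc algebra is not strictly associative, one must be sure that the local-identity equalities literally hold in the chosen convolution product, without hidden signs. This is where \cref{lem:OHnnsame} and \cref{thm:convOHn} are used, transporting the question to the well-understood normalization via $\oddTQFT$, where ${_\bc}1{_\bc}$ corresponds to the unit $v_+^{\otimes|\obc\bc|}\in A^{\otimes |\obc\bc|}$ and acts as the identity by construction. No nontrivial sign is introduced because multiplying by a degree-zero idempotent cannot produce one.
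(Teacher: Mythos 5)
Your proof is correct and follows the same idempotent-based route that the paper implicitly references by pointing to \cite[Proposition~3.11]{naissevaz18}: test the supercommutation relation against the degree-zero local units ${_\bc}1{_\bc}$ and compare components in the direct-sum decomposition. One small imprecision: in the first step, the surgeries that realize $z_{\bb,\ba}\cdot{_\ba}1{_\ba}$ (and ${_\bc}1{_\bc}\cdot z_{\bc,\ba}$) are not homeomorphisms but genuine merges of a circle in $\oba\ba$ (resp.\ $\obc\bc$) with a component of the other circle diagram; topologically these are diagonal inclusions $T^m\hookrightarrow T^{m+1}$, and the induced cohomology map is $\Delta^*$, which is an isomorphism only after restricting to the $v_+$-tensor factor coming from the unit. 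Your fallback justification via $\oddTQFT$ and \cref{thm:geomTQFT} is the correct one, and since merges are degree-zero pullbacks there is indeed no sign or choice of orientation involved, so the local-identity equalities hold for every choice of $f$.
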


\begin{proof}
The proof is similar to \cite[Proposition 3.11]{naissevaz18}.
\end{proof}

Therefore, the algebra structure of $OZ(OH^{n-k}_{k})$ does not depend on the choice of signs in the multiplication rule, since multiplying by an element of $\bigoplus_{\ba}  {_\ba}(OH^{n-k}_{k}){_\ba}$  involves only pullbacks. 
Recall from \cref{eq:monotorus} that there are inclusions of rings
\[
 H^*(\oSpgrFib{n-k}{k}) \hookrightarrow \bigoplus_{\ba \in \crossingless{n-k}{k}} H^*(\torus_{\ba}) \hookrightarrow OH^{n-k}_k.
\]
The image of this inclusion coincides with the odd center, and we get the following:
\begin{thm}\label{thm:oddcenter}
There is an isomorphism of rings
\[
H^*(\spgrFib{n-k}{k}(\bR))
 \xrightarrow{\simeq}
OZ(OH^{n-k}_k), 
\]
induced by the inclusions $T_\ba \hookrightarrow \spgrFib{n-k}{k}(\bR)$. 
\end{thm}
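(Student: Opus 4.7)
The plan is to follow the same approach as for the $(n,n)$-case in \cite{naissevaz18}. By \cref{thm:topdescofrealspringer} it suffices to construct a ring isomorphism $\Phi : H^*(\oSpgrFib{n-k}{k}) \xrightarrow{\simeq} OZ(OH^{n-k}_k)$. I would define $\Phi$ as the composition of the injection $\rho$ from \cref{eq:monotorus} with the embedding $\bigoplus_\ba H^*(T_\ba) = \bigoplus_\ba {_\ba}(OH^{n-k}_k){_\ba} \hookrightarrow OH^{n-k}_k$, so that $\Phi(\omega) = \sum_\ba \imath_\ba^*(\omega)$, where $\imath_\ba : T_\ba \hookrightarrow \oSpgrFib{n-k}{k}$ is the inclusion. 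Injectivity of $\Phi$ is then immediate from \cref{cor:injectiveTntosum}. The ring homomorphism property reduces, via \cref{thm:convOHn} applied with $\bc = \bb = \ba$, to the observation that the composition on ${_\ba}(OH^{n-k}_k){_\ba}$ is precisely the cup product on $H^*(T_\ba)$, combined with naturality of pullback.

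Next I would verify that $\Phi(\omega)$ super-commutes with every element. Given $x \in {_\bc}(OH^{n-k}_k){_\bb}$, both products $\Phi(\omega) \cdot x$ and $x \cdot \Phi(\omega)$ only see the $\bc$- and $\bb$-components of $\Phi(\omega)$, namely $\imath_\bc^*(\omega)$ and $\imath_\bb^*(\omega)$. By functoriality of pullback these restrict to a common class $\omega|_{T_\bc \cap T_\bb}$. Using the convolution description of \cref{thm:convOHn} together with the TQFT identification of \cref{thm:geomTQFT}, each product reduces to a pushforward along a common inclusion of a cup product between $\omega|_{T_\bc \cap T_\bb}$ and a pullback of $x$; graded commutativity of the cup product then supplies exactly the Koszul sign $(-1)^{|\omega||x|}$, so $\Phi(\omega) \in OZ(OH^{n-k}_k)$.

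For surjectivity, I would combine the previous proposition, which gives $OZ(OH^{n-k}_k) \subset \bigoplus_\ba {_\ba}(OH^{n-k}_k){_\ba}$, with \cref{thm:exactOH}. For $z = \sum_\ba z_\ba \in OZ(OH^{n-k}_k)$, applying the super-commutation identity to the generator ${_\bb}1{_\ba} \in {_\bb}(OH^{n-k}_k){_\ba}$ forces $\psi_{\bb,\ba}(z_\bb) = \psi_{\ba,\bb}(z_\ba)$ inside $H^*(T_\bb \cap T_\ba)$ for every pair with $T_\bb \cap T_\ba \neq \emptyset$. This is exactly the condition $\psi^-((z_\ba)_\ba) = 0$ from \cref{thm:exactOH}, so $(z_\ba)_\ba$ lifts to a class $\omega \in H^*(\oSpgrFib{n-k}{k})$ with $\Phi(\omega) = z$.

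The main technical obstacle is the sign bookkeeping in the second step. The multiplication on $OH^{n-k}_k$ is built from a long sequence of surgery-induced pullbacks and pushforwards, each carrying its own sign coming from the orientation choices, and a priori there is no reason the supercommutator $\Phi(\omega) x - (-1)^{|\omega||x|} x \Phi(\omega)$ should vanish on the nose. \cref{thm:geomTQFT} is what saves the day: it identifies these signs with the ones produced by $\linoddTQFT$, for which the Koszul rule is built in, so that the supercommutator reduces to a computation inside $H^*(T_\bc \cap T_\bb)$ that proceeds exactly as in \cite{naissevaz18}.
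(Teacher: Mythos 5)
Your proof is correct and follows the same strategy the paper silently attributes to \cite{naissevaz18} and \cite{khovanov04}: reduce via \cref{thm:topdescofrealspringer}, build $\Phi$ from the restriction maps, and use \cref{cor:injectiveTntosum}/\cref{thm:exactOH} for injectivity and surjectivity together with the supercommutation identity. One small remark that simplifies your ``main technical obstacle'': you do not actually need \cref{thm:geomTQFT} or any orientation bookkeeping in the centrality step, because left (respectively right) multiplication by an element of ${_\bc}(OH^{n-k}_k){_\bc}$ involves only merge/ray-absorption surgeries and hence only orientation-independent pullbacks, so $f_{\bc,\bc,\bb}=f_{\bc,\bb,\bb}=1$ in \cref{thm:convOHn} and both products reduce on the nose to cup products in $H^*(T_\bc\cap T_\bb)$.
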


\begin{proof}
Using the same arguments as in the $(n,n)$-case~\cite{naissevaz18}, which are basically the same as in~\cite{khovanov04}, using \cref{thm:hiso} and \cref{thm:exactOH}, we obtain $OZ(OH^{n-k}_k) \cong H^*(\oSpgrFib{n-k}{k})$. Then, we conclude by \cref{thm:topdescofrealspringer} . 
\end{proof}

\subsection{Geometric bimodules}\label{sec:bimodules}
For $m \geq 0$, let
\begin{align*}
\crossingless{m}{} &:= 
\dot{\bigcup}_{k = 0}^{\lfloor m/2 \rfloor} 
\crossingless{m-k}{k} \text{ and}
\\
OH^{m} &:= \bigoplus_{k=0}^{\lfloor m/2 \rfloor} OH_{k}^{m - k}.
\end{align*}
To a flat tangle, we want to associate some (quasi-)bimodule over $OH^m$-$OH^n$, whose action is induced by a convolution-like product.  
The `quasi-' here means that the action is not necessarily strictly associative, but only graded associative, in the sense of \cite{naisseputyra}. 

\smallskip

Let $t$ be a flat tangle connecting $m_1$ points on the bottom to $m_2$ points on the top. 
We order the endpoints from left to right and then top to bottom. 
We write $(i,j) \in t$ if the $i$th endpoint of $t$ is connected with the $j$th one. 
We also write $|t|$ for the number of circle components of $t$. 
We associate to $t$ the space
\begin{align*}
\torus_t := &\left\{ 
(x_1, \dots, x_{m_2}, y_1, \dots, y_{|t|}, x_{m_2+1}, \dots, x_{m_2+m_1}) | x_i = x_j \text{ if $(i,j) \in t$}
\right\} 
\\
&\subset T^{m_2} \times T^{|t|} \times T^{m_1}.
\end{align*}
Then, we define 
\[
T_{\bb} \times_{t} T_\ba := (T_\bb \times T^{|t|} \times T_\ba) \cap T_t, 
\]
for $\bb \in \crossingless{m_2}{},\ba \in \crossingless{m_1}{}$, 
and
\[
\oddTQFT(t) :=\bigoplus_{\bb \in \crossingless{m_2}{},\ba \in \crossingless{m_1}{}} H^*(T_{\bb} \times_{t} T_\ba). 
\]
Note that there is a homeomorphism $T_\bb \cap T_\ba \cong T_\bb \times_{\id_n} T_\ba$ where $\id_n$ is the identity tangle. Thus, $OH^{n-k}_k \cong \oddTQFT(\id_n)$. 

\smallskip

Fix an $(m_3,m_2)$-flat tangle $t_2$ and an $(m_2,m_1)$-flat tangle $t_1$. Define the spaces
\begin{align*}
T_{t_2;t_1}
 :=
&\left\{
(x_1, \dots, x_{m_3}, y'_1, \dots, y'_{|t_2|}, x_{m_3+1}, \dots, x_{m_3+m_2}, y_1, \dots, y_{|t_2|}, x_{m_3+m_2+1}, \dots, x_{m_3+m_2+m_1} | \right.
\\
&
\left.
\begin{array}{ll}
\quad x_i = x_j, &\text{ if $(i,j) \in t_2$},
\\
\quad x_{i-m_3} = x_{j-m_3}, &\text{ if $(i,j) \in t_1$},
\end{array}
\right\} \subset T^{m_3} \times T^{|t_2|} \times T^{m_2} \times T^{|t_1|} \times T^{m_1},
\end{align*}
and
\[
T_\bc \times_{t_2} T_\bb \times_{t_1} T_\ba := (T_\bc \times T^{|t_2|} \times T_\bb \times T^{|t_1|} \times T_\ba) \cap T_{t_2;t_1},
\]
for $\bc \in B^{m_3}, \bb \in B^{m_2}$ and $\ba \in B^{m_1}$. Note that we have canonical inclusions where the images are identified with certain diagonals
\begin{align*}
T_\bc \times_{t_2} T_\bb \times_{t_1} T_\ba &\hookrightarrow T_\bc \times_{t_2} T_\bb \times T^{|t_1|}, \\
T_\bc \times_{t_2} T_\bb \times_{t_1} T_\ba &\hookrightarrow T_\bb \times_{t_1} T_\ba \times T^{|t_2|}, \\
T_\bc \times_{t_2} T_\bb \times_{t_1} T_\ba &\xhookrightarrow{\imath_{\bc t_2t_1\ba}} T_\bc \times_{t_2t_1} T_\ba.
\end{align*}
Let $\imath_{\bc t_2 \bb}$ be the map obtained by composition of the inclusion $T_\bc \times_{t_2} T_\bb \times_{t_1} T_\ba \hookrightarrow T_\bc \times_{t_2} T_\bb \times T^{|t_1|}$ above, followed by a projection on $T_\bc \times_{t_2} T_\bb$, and similarly for $\imath_{\bb t_1 \ba}$. 
We obtain a (non-necessarily associative) composition map
\[
\oddTQFT(t_2) \otimes \oddTQFT(t_1) \rightarrow \oddTQFT(t_2 t_1),
\]
taking the form of a convolution product
\[
\begin{tikzcd}
H^*(\torus_{\bb} \times_{t_1} \torus_{\ba}) \arrow{rd}{\imath_{\bb t_1 \ba}^*}& & \\
& H^*(T_\bc \times_{t_2} T_\bb \times_{t_1} T_\ba ) \arrow{r}{(\imath_{\bc t_2 t_1 \ba})_!} &  H^*(\torus_{\bc}  \times_{t_2t_1} \torus_{\ba}),\\
H^*(\torus_{\bc} \times_{t_2}\torus_{\bb}) \arrow[swap]{ru}{\imath_{\bc t_2 \bb}^*}&&
\end{tikzcd}
\]
that can be obtained from a chain of pullbacks/pushforwards by doing similar surgeries as in \cref{sec:OHcombmultrule}. 
In particular, the space $\oddTQFT(T)$ comes with a (non-associative) $OH^{m_1}$-$OH^{m_2}$-bimodule structure, which coincides with the multiplication inside of $OH^n$ for $t = \id_n$. This also coincides with the bimodules constructed in \cite{naisseputyra} (and therefore the action is graded associative) because of \cref{thm:geomTQFT}. 
 
\smallskip

Furthermore, given two flat tangles $t$ and $t'$ related by a surgery, we have (up to exchanging the role of $t$ and $t'$) a canonical inclusion $\Delta : T_{\bb} \times_{t'} T_\ba \hookrightarrow T_{\bb} \times_{t} T_\ba$, which in turn gives, by taking pullbacks and pushforwards, bimodule morphisms 
\begin{align*}
\oddTQFT(t) &\rightarrow \oddTQFT(t'), 
&
\oddTQFT(t') &\rightarrow \oddTQFT(t).
\end{align*}
Note that in general the morphim $\oddTQFT(t) \rightarrow \oddTQFT(t')$ is a mix of pullbacks and pushfowards, depending on the closure of $t$ and $t'$ by $\bb$ and $\ba$. 
Again,  by \cref{thm:geomTQFT}, it coincides with merge and split cobordisms for $\oddTQFT$, and thus gives the maps used to construct odd Khovanov homology (also for tangles as in \cite{naisseputyra}), after fixing the signs using some extra information coming from the grading that controls the associativity. 

\begin{rem}
We do not know if these choices of signs can be interpreted as coming from a sort of coherent choice of orientation for the involved spaces. 
\end{rem}

\subsection{The odd algebra $OK^{n-k}_k$} \label{sec:qhcarcalgebra}

In the even case, the algebra $H^{n-k}_k$ admits a quasi-hereditary cover $K^{n-k}_k$. The construction of $K^{n-k}_k$  is similar to the one of $H^{n-k}_k$, but allowing other kinds of crossingless matchings. These can be interpreted as adding some new subspaces of $\spgrFib{n-k}{k}(\bC)$ in the machinery. These subspaces correspond to closures of Bia\l{}ynicki-Birula cells in the paving of the complex Springer fiber, see~\cite{stroppelwebster12}. We add the odd version of these subspaces in $\oSpgrFib{n-k}{k}$ in order to construct an odd version of $K^{n-k}_k$.

\subsubsection{Weighted crossingless matchings}

We follow~\cite{stroppelwebster12}.

\begin{defn}
A \emph{weight sequence} of  type $(n-k,k)$ is given by sequence of $n$ elements in $\{\wedge, \vee\}$  with $\vee$ appearing exactly $k$ times. \\
A \emph{weighted crossingless matching} is the datum of a crossingless matching $\ba$ on $n$ points (we allow any number of arcs), and a weight sequence $\lambda$ of type $(n-k,k)$, such that if we label the endpoints of $\ba$ by the elements of $\lambda$, then any arc in $\ba$ is bounded by $\vee\wedge$, in that order, e.g. 
\[
 	\tikzdiagcm{
 		\cupdiag{1}{2}{1.5} [orientedcup];
	 }
\]
We write $WB^{n-k}_k$ for the set of weighted crossingless matchings of type $(n-k,k)$. 
\end{defn}

Note that a weighted crossingless matching $\lambda\ba$ possesses up to $k$ arcs and a least $n-2k$ rays, and thus $\ba \in \bigsqcup_{(\ell \leq k)} \crossingless{n-\ell}{\ell}$.

For a weight sequence $\lambda = w_1 \cdots w_n$, with $w_j \in \{\vee,\wedge\}$, and $ 1 \leq i \leq n$, we write
\[
i(\lambda) := \begin{cases}
1, & \text{ if } w_i = \vee, \\
0, & \text{ if } w_i = \wedge.
\end{cases}
\]
To a weighted crossingless matching $\lambda\ba$ we associate the following subspace of  $\oSpgrFib{n-k}{k}$:
\[
\torus_{\lambda\ba;n-k,k}  := \left\{(x_1, \dots, x_n) \in \torus^n \left|
\begin{array}{ll}
 x_i = x_j,  &\text{ if } (i,j) \in \ba,\\
 x_i =  (-1)^{i+i(\lambda)} p, &\text{ if } (i) \in \ba ,
\end{array}
\right.
\right\}.
\]
As before, we will usually write $\torus_{\lambda\ba}$ instead of $\torus_{\lambda\ba;n-k,k}$, assuming it is clear from the context. 

\begin{defn}
We say that a weighted crossingless matching is \emph{valid} if there is no couple of rays with endpoints labeled $\vee\wedge$, in that order from left to right. \\
We write $WB^{n-k}_{k}$ for the set of valid weighted crossingless matchings of type $(n-k,k)$.
\end{defn}

\begin{exe}\label{exe:weightedcrossinglessmatchings}
Take $n = 4$ and $k=1$. Then there are four valid weighted crossingless matchings:
\begin{align*} 
	\tikz[thick, yscale=.75, xscale=.5, baseline={([yshift=1ex]current bounding box.center)}]{
		\draw[orientedcup]  +(0,0) .. controls (0,-0.75) and (1, -0.75) ..  +(1,0) ;
		\draw[-angle 90, xshift=2cm] (0,-1) -- (0,0);
		\draw[-angle 90, xshift=3cm] (0,-1) -- (0,0);
	 }\ &
&
	\tikz[thick, yscale=.75, xscale=.5, baseline={([yshift=1ex]current bounding box.center)}]{
		\draw[orientedcup, xshift=1cm]  +(0,0) .. controls (0,-0.75) and (1, -0.75) ..  +(1,0) ;
		\draw[-angle 90, xshift=0cm] (0,-1) -- (0,0);
		\draw[-angle 90, xshift=3cm] (0,-1) -- (0,0);
	 }\ &
& 
	\tikz[thick, yscale=.75, xscale=.5, baseline={([yshift=1ex]current bounding box.center)}]{
		\draw[orientedcup, xshift=2cm]  +(0,0) .. controls (0,-0.75) and (1, -0.75) ..  +(1,0) ;
		\draw[-angle 90, xshift=0cm] (0,-1) -- (0,0);
		\draw[-angle 90, xshift=1cm] (0,-1) -- (0,0);
	 }\ &
& 
	\tikz[thick, yscale=.75, xscale=.5, baseline={([yshift=1ex]current bounding box.center)}]{
		\draw[-angle 90, xshift=0cm] (0,-1) -- (0,0);
		\draw[-angle 90, xshift=1cm] (0,-1) -- (0,0);
		\draw[-angle 90, xshift=2cm] (0,-1) -- (0,0);
		\draw[-angle 90 reversed, xshift=3cm] (0,-1) -- (0,0);
	 }\ &
\end{align*}
corresponding to the spaces
\begin{align*}
\{x,x,-p,p\} &,
&\{-p,x,x,p\} &, 
&\{-p,p,x,x\} &, 
&\{-p,p,-p,-p\} &.
\end{align*}
\end{exe}

Given a crossingless matching of type $(n-k,k)$, one can construct a valid weighted crossingless matching of type $(n-k,k)$ by labeling all cups with $\vee\wedge$ and all rays with $\wedge$. Moreover, given such a weighted crossingless matching, one can construct all other valid weighted crossingless matchings of type $(n-k,k)$ by splitting some cups into two rays, and exchanging the corresponding labels $\vee\wedge$ to $\wedge\vee$:
\[
 \tikz[thick, yscale=1.25, xscale=.75, baseline={([yshift=1ex]current bounding box.center)}]{
		\draw[orientedcup]  +(1,0) .. controls (1,-0.5) and (2, -0.5) ..  +(2,0) ; 
	 }
\quad \longmapsto \quad
 \tikz[thick, yscale=1.25, xscale=.75, baseline={([yshift=1ex]current bounding box.center)}]{
		\draw[-angle 90, xshift=0cm] (0,-.75) -- (0,0);
		\draw[-angle 90 reversed, xshift=1cm] (0,-.75) -- (0,0);
	 }
\]
One can obtain $\lambda' \ba'$ from $\lambda\ba$ in such a way, if and only if
$
T_{\lambda'\ba'} \subset T_{\lambda\ba}.
$
This is for example what happens with the two elements on the right in \cref{exe:weightedcrossinglessmatchings}.

\smallskip

Furthermore, given a weight sequence $\lambda$ of type $(n-k,k)$, there is a unique way to construct a crossingless matching $\ba$ such that $\lambda\ba$ is valid. The procedure consists of looking for adjacent unmatched pairs of $\vee \wedge$ and connecting them by a cup, repeating these operations until there is no more left. Then, we put rays on the remaining points. This means there are exactly $\binom{n}{k}$ elements in $WB^{n-k}_{k}$. Moreover, it means we can denote elements of $WB^{n-k}_{k}$ by only writing their weight sequence, and thus we will write $\torus_{\lambda}$ instead of $\torus_{\lambda\ba}$.

\smallskip

Given two $\lambda, \mu  \in WB^{n-k}_{k}$, we are interested in the intersection $\torus_{\lambda} \cap \torus_{\mu}$. For this, we consider the diagram $\obb\mu$  given by taking the horizontal mirror of $\bb$, putting the labels of $\mu$ at the bottom endpoints (without reversing them). 
Then, we consider the diagram $\obb(\mu\lambda)\ba$ given by first forgetting all labels on the endpoints on cups/caps (only keeping labels on rays) and gluing $\obb\mu$ on top of $\lambda\ba$. 
We obtain $\torus_{\lambda} \cap \torus_{\mu} = \emptyset$ if two rays are connected in $\obb(\mu\lambda)\ba$ with opposite orientations, considering the cups and caps as reversing the orientation. Otherwise, we have $\torus_{\lambda} \cap \torus_{\mu} \cong T^{|\obb(\mu\lambda)\ba|}$, where $|\obb(\mu\lambda)\ba|$ counts the number of circle components in $\obb(\mu\lambda)\ba$. 
Note that if we consider elements of $B^{n-k}_{k}$ with their default weight sequence, then this rules is equivalent to the one of not having turnbacks.

\subsubsection{$OK^{n-k}_k$ arc algebras}

We put:
\begin{align*}
OK_{k}^{n-k} &:= \bigoplus_{\lambda, \mu \in WB^{n-k}_{k}} {_\mu}(OK_k^{n-k})_{\lambda}, &
 {_\mu}(OK_k^{n-k})_{\lambda} := H^*\bigl( \torus_{\mu} \cap \torus_{\lambda} \bigr). 
\end{align*}

\begin{rem}
Again, there is a quantum graded version with ${_\mu}(OK_k^{n-k})_{\lambda} := H^*\bigl( \torus_{\mu} \cap \torus_{\lambda} \bigr) \{k - |\obb(\mu\lambda)\ba|\}$.
\end{rem}

The multiplication map ${_{\gamma}}(OK^{n-k}_k)_{ \nu } \otimes {_{\mu}}(OK^{n-k}_k)_{ \lambda} \rightarrow 0$ is zero for $\nu \neq \mu$. Otherwise, for the multiplication
\[
{_\nu}(OK^{n-k}_k)_{\mu} \otimes {_\mu}(OK^{n-k}_k)_{\lambda} \rightarrow {_\nu}(OK^{n-k}_k)_{\lambda},
\]
we consider the planar diagram $\obc(\nu\mu)\bb \obb(\mu\lambda)\ba$ where the rays of $\bb \obb$ are truncated, separated and oriented. Then, we follow exactly the same procedure as~\cref{sec:OHcombmultrule}, the only difference being that now truncated rays in $I$ are oriented by $\mu$. Thus, we consider the spaces
\[
\torus_I :=\left\{(x_1, \dots, x_n, y_1, \dots, y_n) \in \torus^{ n} \times T^{ n} \left|
\begin{array}{ll}
 x_i = x_j \text{ and } y_i = y_j, &\text { if } (i,j) \in I, \\
 x_i = (-1)^{i+i(\mu)} p = y_i, &\text{ if } (i) \in I, \\
 x_i = y_i, &\text{ if } [i] \in I,
\end{array}
\right.
\right\},
\]
and
\[
{_{\bc\nu}(\torus_I){_{\lambda\ba}}} := \torus_I \cap (\torus_\nu \times \torus_\lambda).
\]
This yields a quantum graded, non-associative convolution algebra by converting the sequence of surgeries to a product
\[
(\imath_{\nu\lambda})_!(f_{\nu,\mu,\lambda} \smile \imath_{\nu\mu}^* \smile \imath_{\mu\lambda}^*),
\]
 obtained from
\[
\begin{tikzcd}
H^*(\torus_{\mu} \cap \torus_{\lambda}) \arrow[rd,"\imath_{\mu\lambda}^*"]&& \\
& H^*(\torus_{\nu}  \cap \torus_{\mu} \cap S_{\lambda}) \arrow[r,"(\imath_{\nu\lambda})_!"] & H^*(\torus_{\nu}\cap \torus_{\lambda}).\\
H^*(\torus_{\nu} \cap \torus_{\mu}) \arrow[ru,"\imath_{\nu\mu}^*"']&&
\end{tikzcd}
\]
As before, this gives a composition law $\star_{f}$ for each choice of classes $f = \{ f_{\nu,\mu,\lambda} | \nu, \mu, \lambda \in  WB^{n-k}_{k}\}$. 

To sum up, we have:

\begin{thm}\label{thm:oddqhcover}
For each choice of classes $f = \{ f_{\nu,\mu,\lambda} | \nu, \mu, \lambda \in  WB^{n-k}_{k}\}$, $(OK^{n-k}_k, \star_f)$ is a quantum graded, non-associative algebra, which agrees modulo 2 with the quasi-hereditary cover $K^{n-k}_k$ from \cite{stroppel09}. 
\end{thm}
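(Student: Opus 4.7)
The proof will closely parallel the arguments already given for $OH^{n-k}_k$ in \cref{lem:OHnnsame} and \cref{thm:convOHn}, so the plan is to reduce everything to statements we have already proved.

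First, I will show that $\star_f$ is well-defined and preserves the quantum grading. As in \cref{sec:OHcombmultrule}, the sequence of surgeries on $\overline{\bc}(\nu\mu)\bb\overline{\bb}(\mu\lambda)\ba$ (taken in some fixed order on the components of $\bb$) yields a chain of inclusions between the intermediate spaces ${_{\bc\nu}(\torus_{I_\ell}){_{\lambda\ba}}}$, each coming from one of the five surgery types listed in \cref{sec:oddArcPP}. This chain produces a morphism in $\DoubleTop$ whose cohomology realization is exactly the convolution $\star_{f_{\nu,\mu,\lambda}}$ for the appropriate Euler class $f_{\nu,\mu,\lambda}$ coming from the excess bundles of the successive intersections, as in the proof of \cref{thm:convOHn}; by \cref{thm:geomTQFT} this agrees with the odd TQFT $\oddTQFT$ applied to the corresponding sequence of elementary cobordisms. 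The grading count is then the same as in the even case treated in~\cite{stroppelwebster12}: each merge contributes $-1$ and each split contributes $+1$ to the homological degree, which matches the shift $\{k-|\overline{\bb}(\mu\lambda)\ba|\}$ defining the quantum grading, so $\star_f$ is homogeneous of degree $0$ (the orientation/sign choices packaged in $f$ do not affect homogeneity).

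Next, for the modulo $2$ comparison with $K^{n-k}_k$, I will observe that all signs in the odd TQFT $\oddTQFT$ disappear modulo $2$, and the resulting TQFT coincides with the classical Khovanov Frobenius TQFT associated to $\bZ[X]/(X^2)\otimes \bZ/2\bZ$. Hence modulo $2$ the maps $H^*(\oddTQFT)$ assigned to each surgery in our construction agree with the corresponding maps used in Stroppel--Webster's convolution description of $K^{n-k}_k$, after identifying the real intersection $T_\mu \cap T_\lambda$ with the complex intersection of the appropriate Bia\l ynicki--Birula cell closures in $\mathfrak{B}^{n-k}_k(\bC)$ modulo $2$ (this identification is exactly the one used in \cref{lem:isoOHTMod2}, extended from $B^{n-k}_k$ to the larger set $WB^{n-k}_k$ by the same argument, since each weighted matching specifies a product of $S^1$'s versus $S^2$'s with the same combinatorics of coordinates). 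Therefore ${_\mu}(OK^{n-k}_k)_\lambda \otimes \bZ/2\bZ \cong {_\mu}(K^{n-k}_k)_\lambda \otimes \bZ/2\bZ$ and $\star_f$ reduces to Stroppel--Webster's composition, independently of the sign choices in $f$.

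The main obstacle is really just the bookkeeping for the new situation of valid weighted matchings: one must check that the five surgery cases of \cref{sec:oddArcPP} still exhaust what happens in $\overline{\bc}(\nu\mu)\bb\overline{\bb}(\mu\lambda)\ba$ when the truncated rays in the middle carry orientations determined by $\mu$, and that the orientation rule ``two rays connected with opposite orientations gives $\emptyset$'' plays exactly the role that the parity constraint $x_i = (-1)^i p$ played before. This is a direct extension of the analysis already carried out for $OH^{n-k}_k$, and involves no new geometric input once one has the weighted analogue of \cref{def:topoddSpgr}. With this in place the theorem follows by combining the graded-algebra verification above with \cref{thm:geomTQFT} to compare with the modulo $2$ reduction in~\cite{stroppelwebster12}.
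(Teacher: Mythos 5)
Your proposal is essentially correct and takes the same route as the paper: like the discussion that precedes the theorem (the paper states it after ``To sum up, we have:'' without a separate proof environment), you reduce everything to the $OH^{n-k}_k$ case of \cref{lem:OHnnsame} and \cref{thm:convOHn}, invoke \cref{thm:geomTQFT} to identify the surgery-induced morphisms with $\oddTQFT$, and observe that $\oddTQFT$ reduces modulo $2$ to the Frobenius TQFT underlying Stroppel--Webster's construction of $K^{n-k}_k$, with the orientation rule on rays of $WB^{n-k}_k$ playing the role of the parity constraint $x_i = (-1)^i p$.
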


%
%


\bibliographystyle{bibliography/habbrv}


\end{document}